\let\oldtocsection=\tocsection
\let\oldtocsubsection=\tocsubsection
\let\oldtocsubsubsection=\tocsubsubsection
\renewcommand{\tocsection}[2]{\hspace{0em}\textbf{\oldtocsection{#1}{#2}}}
\renewcommand{\tocsubsection}[2]{\hspace{1.8em}\oldtocsubsection{#1}{#2}}
\renewcommand{\tocsubsubsection}[2]{\hspace{3em}\oldtocsubsubsection{#1}{#2}}
\numberwithin{equation}{section}
\theoremstyle{plain}
\newtheorem{theorem}{Theorem}[section]
\newtheorem{corollary}[theorem]{Corollary}
\newtheorem{proposition}[theorem]{Proposition}
\newtheorem{lemma}[theorem]{Lemma}
\newtheorem{lem}[theorem]{Lemma}
\theoremstyle{remark}
\newtheorem{rem}[theorem]{Remark}
\theoremstyle{definition}
\newtheorem{definition}[theorem]{Definition}
\newtheorem*{notation*}{Notation}
\DeclareMathOperator{\diam}{diam}
\renewcommand{\phi}{\varphi}
\renewcommand{\tilde}{\widetilde}
\renewcommand{\emptyset}{\varnothing}
\newcommand{\eps}{\varepsilon}
\def\diam {\mathop {\hbox{\rm diam}}}
\def\R{\mathbb{R}}
\def\\Sigma{\mathbb{H}}
\def\N{\mathbb{N}}
\def\d{\delta}
\newcommand{\cM}{\mathcal{M}}
\newcommand{\cH}{\mathcal{H}}
\newcommand{\var}{\mathrm{var}}
\newcommand{\Hd}{\dim_\mathrm{H}}
\renewcommand{\i}{\mathbf{i}}
\renewcommand{\v}{\mathbf{v}}
\renewcommand{\j}{\mathbf{j}}
\renewcommand{\k}{\mathbf{k}}
\renewcommand{\d}{\,{\, d}}
\DeclareMathOperator{\llocd}{\underline{dim}_{loc}}
\DeclareMathOperator*{\essinf}{ess\,inf}
\newcommand{\red}[1]{ {\color{Red}#1} }
\numberwithin{equation}{section} 
\begin{document}

\title{Pointwise perturbations of countable Markov maps}

\author{Thomas Jordan}
\address{School of Mathematics, University of Bristol, University Walk, Clifton, Bristol, BS8 1TW, England}
\email{thomas.jordan@bristol.ac.uk}

\author{Sara Munday}
\address{Dipartimento di Matematica, Universit\`{a} di Bologna, Piazza di Porta S.Donato 5, 40126 Bologna, Italy}
\email{magicdairyfairy@gmail.com}

\author{Tuomas Sahlsten}
\address{Department of Mathematics, University of Bristol, University Walk, Clifton, Bristol, BS8 1TW, England}
\email{tuomas.sahlsten@bristol.ac.uk}

\dedicatory{Dedicated to the memory of Bernd O. Stratmann}

\keywords{Countable Markov maps, differentiability, Hausdorff dimension, perturbations, thermodynamical formalism, H\"older exponent, Gauss map, L\"uroth maps, Manneville-Pomeau maps, non-uniformly hyperbolic dynamics}

\subjclass[2010]{37C15, 37C30, 37L30}

\thanks{TS is supported by the European Union (ERC grant $\sharp$306494 and MSCA-IF grant $\sharp$655310)}

\begin{abstract}
We study the pointwise perturbations of countable Markov maps with infinitely many inverse branches and establish the following continuity theorem: Let $T_k$ and $T$ be expanding countable Markov maps such that the inverse branches of $T_k$ converge pointwise to the inverse branches of $T$ as $k \to \infty$. Then under suitable regularity assumptions on the maps $T_k$ and $T$ the following limit exists:
$$\lim_{k \to \infty} \Hd \{x : \theta_k'(x) \neq 0\} = 1,$$
where $\theta_k$ is the topological conjugacy between $T_k$ and $T$ and $\Hd$ stands for the Hausdorff dimension. This is in contrast with the fact that other natural quantities measuring the singularity of $\theta_k$ fail to be continuous in this manner under pointwise convergence such as the H\"{o}lder exponent of $\theta_k$ or the Hausdorff dimension $\Hd (\mu \circ \theta_k)$ for the preimage of the absolutely continuous invariant measure $\mu$ for $T$. As an application we obtain a perturbation theorem in non-uniformly hyperbolic dynamics for conjugacies between intermittent Manneville-Pomeau maps $x \mapsto x + x^{1+\alpha} \mod 1$ when varying the parameter $\alpha$.
\end{abstract}

\maketitle

\sloppy

\section{Introduction and statement of results} \label{sec:intro}

\subsection{Countable Markov maps and singular functions} \label{sec:luroth}

Countable Markov maps, that is, interval maps with countably many expanding branches, have received much attention over the past several years. They appear in particular in Diophantine approximation in the study of approximation rates of irrationals by rational numbers. The key examples here are the \textit{Gauss map} $x \mapsto 1/x \mod 1$, which generates the continued fraction expansion \cite{DK, KS2}, and the various \textit{L\"uroth maps}, which generate L\"uroth expansions \cite{BBDK, KMS, KKK}. Moreover, countable Markov maps appear naturally in the study of non-uniformly hyperbolic dynamical systems such as the intermittent \textit{Manneville-Pomeau maps} \cite{MP}, where often one considers induced countable Markov maps of such systems. Various examples are pictured in Figure \ref{fig:countableMarkovMaps} below.

\begin{figure}[ht!]
\includegraphics[scale=0.58]{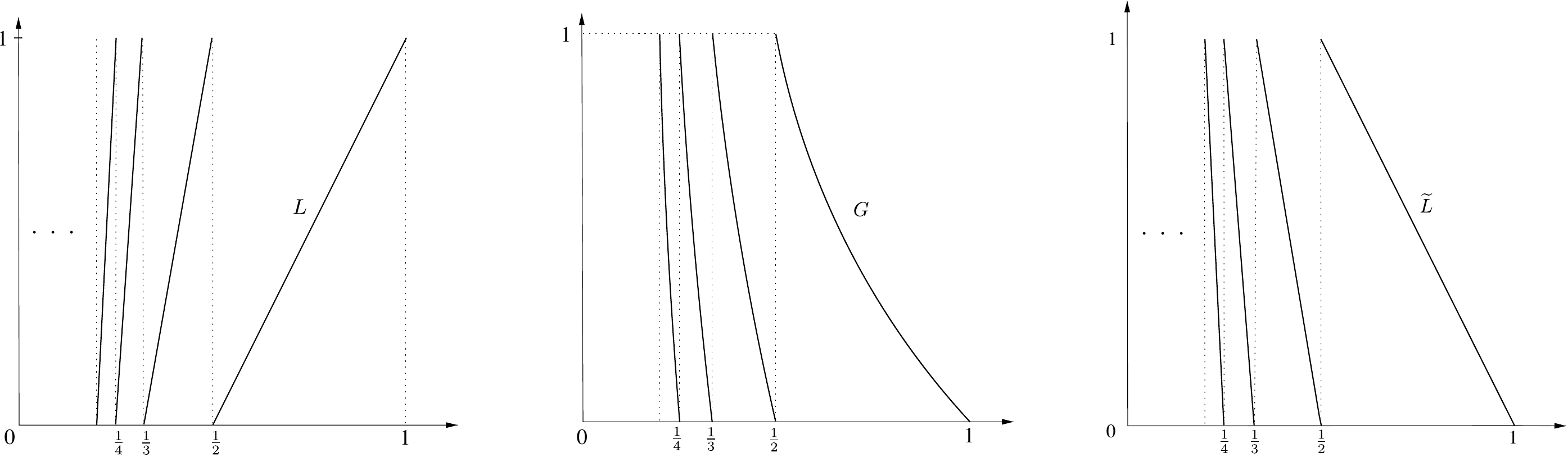}
\caption{Examples of countable Markov maps: the Gauss map $G$, the L\"uroth map $L$ and the alternating L\"uroth map $\tilde L$, see \cite{BBDK} for definitions.}
\label{fig:countableMarkovMaps}
\end{figure}

In this paper, we are interested in the changes to the dynamics of countable Markov maps when  small pointwise perturbations are applied. A possible way to evaluate the effect of such perturbations on the dynamics of these maps is to investigate the topological conjugacies between the original map and the perturbed map, where we recall that a homeomorphism $\theta:(X, T) \to (Y, S)$ between two topological dynamical systems is said to be a {\it topological conjugacy} if $\theta\circ T=S\circ \theta$. In other words, every orbit under $T$ corresponds to an orbit under $S$ and vice versa. In the case of countable Markov maps $T$ and $S$ the conjugacies will usually be strictly increasing, singular maps, otherwise known as slippery Devil's staircases (a term coined by Mandelbrot \cite{Mandelbrot}). Singular here means that the derivative is Lebesgue-almost everywhere equal to zero:
$$\mathrm{Leb}(\{x : \theta'(x) \neq 0\}) = 0.$$
Now the \textit{degree} of the singularity of the conjugacy $\theta$ gives us a certain sense of how ``close'' the maps $T$ and $S$ are. Natural ways to measure the degree of singularity are for example the Hausdorff dimension $\Hd \{x : \theta'(x) \neq 0\}$ or the H\"older exponent of the conjugacy $\theta$.

Perhaps the first well-studied example of a singular function is Minkowski's question-mark function $? :[0, 1]\to [0,1]$, which was constructed by H. Minkowski in 1908 (see \cite{min}). It is illustrated in Figure \ref{fig:minkowskiluroth}. This function was originally designed precisely to map all rational numbers in $[0,1]$ onto the dyadic rationals, and all  algebraic numbers of degree two onto the non-dyadic rationals, in an order preserving way. The main idea was to illustrate the Lagrange property of the algebraic numbers of degree two (see Theorem 28 in \cite{cf}). The function $?$ was proved to be singular by Denjoy \cite{denjoy}, and was also studied by Salem \cite{Salem}.

 More recently, Kesseb\"ohmer and Stratmann \cite{KS2} showed that the Minkowski question-mark function can be thought of as the topological conjugacy between the Gauss map and the alternating L\"uroth map (or, equivalently, between the classical Farey map from elementary number theory and the tent map). Moreover, they showed that the derivative can either take the value zero, be infinite, or else it doesn't exist. They then applied previous thermodynamical results to compute the Hausdorff dimension of the sets where the derivative is infinite and where it doesn't exist, and these dimensions turn out to be equal \cite{KesseboehmerStratmann:07}.

  \begin{figure}[ht!]
\includegraphics[scale=0.7]{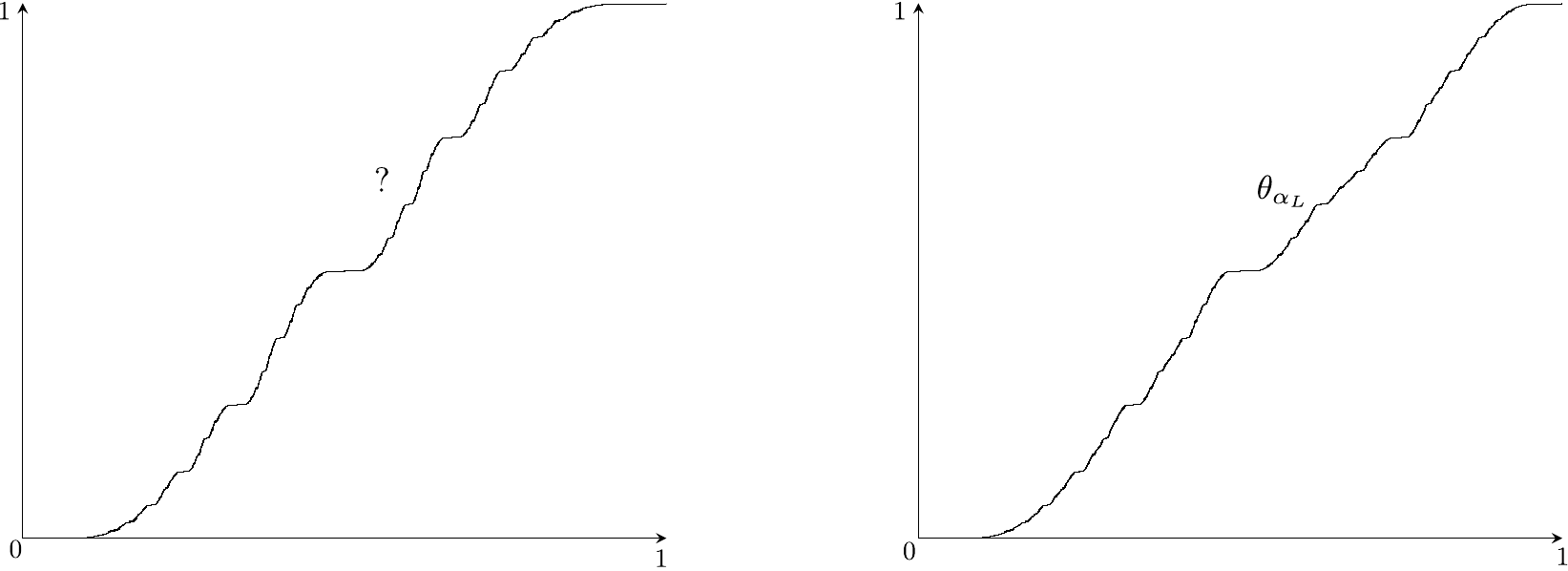}
\caption{Minkowski's question mark function $?$ and the $\alpha_L$-Farey-Minkowski function $\theta_{\alpha_L}$ for the classical alternating L\"uroth map $L$, as shown above.}
\label{fig:minkowskiluroth}
\end{figure}

The Hausdorff dimension of the set of  non-zero derivative for a variant of the Minkowski question-mark function has been studied  by Li, Xiao and Dekking in \cite{LXD}, and for the case of  expanding maps of the interval with finitely many increasing branches by Kesseb\"ohmer {\it et al.} in \cite{JKPS}. A similar problem has also been studied in the case of singular functions which are increasing but not strictly increasing, such as for several variants of the Cantor ternary function, see \cite{Dar,LXD,Fa2,KS2,Tro} for example. Moreover, similar results have been considered for topological conjugacies (called $\alpha$-\textit{Farey-Minkowski functions}) between $\alpha$-L\"uroth maps by Munday \cite{smnew} (an example is shown in Figure \ref{fig:minkowskiluroth}) and later by Arroyo \cite{AA}, where he considers the conjugacy maps between the Gauss map and any $\alpha$-L\"uroth map.

\subsection{Perturbations and stability}\label{sec:perturbations}

There is extensive literature on the perturbations of dynamical systems and their effect on entropy, dimension, and other statistical quantities under both random and deterministic perturbations. In our case we will study the following problem: How do the notions of singularity of the topological conjugacy $\theta$ between countable Markov maps $T$ and $S$ behave when $T$ and $S$ are sufficiently close? Here by ``closeness'' we mean the relatively weak notion that the inverse branches of $T$ and $S$ are pointwise close. 

Heuristically here one would expect that the conjugacies $\theta$ would share the properties of the identity mapping as $\theta$ is pointwise close to the identity. We will find out that for the Hausdorff dimension of the set of $x$ with $\theta'(x) \neq 0$, we do have some continuity under pointwise perturbations (see Theorem \ref{mainthm} below), but under other notions of singularity of $\theta$, such as H\"older exponents or Hausdorff dimension of the $\theta$ image of the absolutely continuous invariant measure, the continuity fails to occur (see Propositions \ref{prop:holder}, \ref{prop:hausdorff}, \ref{prop:lyapunov} below) due to the non-compact nature of countable Markov maps.

To state our main result, let us first fix a little notation (we refer to Section \ref{sec:prelim} for a more thorough exposition). Let $f_i : [0,1] \to [0,1]$ be $C^1$ contractions for each $i\in\N$ and where either $f_{1}(0)=1$, $f_{i+1}(0)=f_{i}(1)$  for all $i\in\N$ and $(f_i(0))$ is a decreasing sequence with $\lim_{i\to\infty}f_i(0)=0$ or we have that $f_1(1)=1$,  $f_{i+1}(1)=f_i(0)$ for all $i\in\N$ and $(f_i(1))_{i \in \N}$ is a decreasing sequence. These maps are the inverse branches of a piecewise differentiable countable Markov map $T$. We assume some regularity on $T$ and a standard assumption in this setting is that the geometric potential $-\log|T'|$ has \textit{summable variations} (see Section \ref{sec:thermo} for a definition), that is,  
$$\sum_{n = 1}^\infty \var_n(-\log |T'|) < \infty,$$ 
which is  satisfied, for example, for the Gauss map, jump transformations of the Manneville-Pomeau map, and for all $\alpha$-L\"uroth maps. We will fix such a system \{$T$, $(f_i)_{i \in \N}$\} and consider perturbations of the system, in the following sense. 

For each $k\in\N$ we will consider a system with maps $f_{i,k}$ and $T_{k}$ satisfying the variation assumption above and where for each $x\in [0,1]$ we have 
$$\lim_{k\to\infty} f_{i,k}(x)=f_i(x).$$ We need that $f_{i,k}$ have the same orientation as the maps $f_i$. This means the dynamical systems $T_{k}$ and $T$ are  topologically conjugate and we will denote the conjugacy by $\theta_{k}$, that is the homeomorphism $\theta_k:[0,1]\to [0,1]$ satisfies that $T\circ\theta_k=\theta_k\circ T_k$. Now the pointwise convergence of the inverse branches guarantee that when $k \to \infty$, we have that the conjugacy $\theta_k$ will flatten and converge pointwise to the identity mapping, see Figure \ref{fig:limitperturbation} for example. 

 \begin{figure}[ht!]
\includegraphics[scale=0.5]{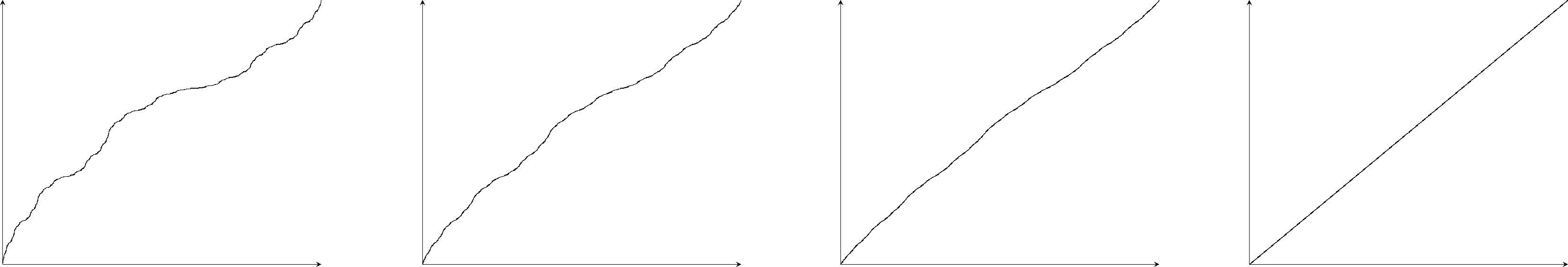}
\caption{Four conjugacies $\theta_k$ between two countable Markov maps $T_k$ and $T$. The map $T$ is the $\alpha_D$-L\"uroth map for the dyadic partition $\alpha_D$ and $T_k$ is the $\alpha$-L\"uroth map for a $\lambda$-adic partition for $\lambda$ attaining the values $3$, $2.5$, $2.1$ and $2$. The maps $\theta_k$ approach the identity pointwise when $f_{i,k} \to f_i$ pointwise.}
\label{fig:limitperturbation}
\end{figure}

Thus one would expect that $\theta_k$ should share the properties of the identity in the limit. Our main result shows that this happens for the Hausdorff dimension of the set $\{x:\theta_{k}'(x)\neq 0\}$ under suitable assumptions on the converging family of countable Markov maps.

\begin{theorem}\label{mainthm}
Suppose $T$ is a countable Markov map with inverse branches $f_i$ such that the potential $-\log |T'|$ has summable variations. Let $(T_k)$ be a sequence of countable Markov maps with inverse branches $f_{i,k}$. Assume the following two assumptions on the tail and variations:
\begin{itemize}
\item[\emph{(1)}] There exists $0 < t<1$ with
$$\sum_{i=1}^{\infty}|f_i[0,1]|^t<\infty.$$
\item[\emph{(2)}] The potentials $-\log |T_k'|$ have summable variations with a uniform bound over $k \in \N$:
$$\sup_{k \in \N} \sum_{n = 1}^\infty \var_n(-\log |T_k'|) < \infty.$$
\end{itemize}
Under these assumptions, if for any $i \in \N$ the inverse branches $f_{i,k} \to f_i$ pointwise as $k \to \infty$, we have
\[
\lim_{k\to\infty}\dim_{\mathrm{H}}\{x:\theta_{k}'(x)\neq 0\}=1.
\]
\end{theorem}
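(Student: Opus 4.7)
The plan is to prove the lower bound $\liminf_{k\to\infty}\dim_{\mathrm{H}}\{x:\theta_k'(x)\neq 0\}\geq 1$ by constructing, for each $\varepsilon>0$ and all sufficiently large $k$, a $T_k$-invariant ergodic Borel probability measure $\mu_k$ carried by $\{x:\theta_k'(x)\neq 0\}$ with $\dim_{\mathrm{H}}\mu_k\geq 1-\varepsilon$; the upper bound is trivial. The first step is a reduction to a finite subsystem: write $\Lambda_N\subset [0,1]$ for the limit set of the IFS $\{f_1,\ldots,f_N\}$ and $\Lambda_{N,k}$ for the corresponding set for $T_k$. Bowen's formula for infinite conformal IFSs together with the tail assumption (1) gives $\dim_{\mathrm{H}}\Lambda_N\nearrow 1$ as $N\to\infty$, and combining (2) with $f_{i,k}\to f_i$ pointwise for each $i\leq N$, continuity of topological pressure in the potential yields $\dim_{\mathrm{H}}\Lambda_{N,k}\to\dim_{\mathrm{H}}\Lambda_N$ as $k\to\infty$. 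Fix $N$ with $\dim_{\mathrm{H}}\Lambda_N\geq 1-\varepsilon/2$.

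For the core construction, parametrise Bernoulli measures $\nu_p$ on $\Sigma_N=\{1,\ldots,N\}^{\mathbb{N}}$ by $p\in\Delta_N$, and form the $T_k$-invariant ergodic pushforwards $\mu_{p,k}:=\pi_{k*}\nu_p$. The continuous functional
\[
H_k(p):=\int\log|T_k'|\,d\mu_{p,k}-\int\log|T'|\,d(\theta_k\mu_{p,k})
\]
tends to $0$ uniformly on compact subsets of $\Delta_N$ as $k\to\infty$, thanks to (2) and the pointwise convergence of inverse branches. Starting from a base point $p_0$ at which $\mu_{p_0,k}$ has Hausdorff dimension within $\varepsilon/4$ of $\dim_{\mathrm{H}}\Lambda_{N,k}$ -- for instance the Bowen vector solving $P_{T_k}(-s\log|T_k'|;\Sigma_N)=0$ -- an implicit-function argument along a smooth curve in $\Delta_N$, possibly after enlarging the symbol set with tail symbols so that $H_k$ changes sign (cost-free in dimension by (1)), produces $p_k$ with $H_k(p_k)=0$ and $\dim_{\mathrm{H}}\mu_{p_k,k}\geq 1-\varepsilon$. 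The dimension estimate uses Young's formula $\dim_{\mathrm{H}}\mu=h(\mu)/\int\log|T_k'|\,d\mu$, valid on finite subsystems under assumption (2).

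It remains to verify that $\mu_k:=\mu_{p_k,k}$ is supported in $\{x:\theta_k'(x)\neq 0\}$. By the chain rule and bounded distortion (from (2)), the condition $\theta_k'(x)=0$ is equivalent to the Birkhoff sums $S_n^{T_k}\phi(x)$ tending to $-\infty$, where $\phi:=\log|T_k'|-\log|T'|\circ\theta_k$. Since $\int\phi\,d\mu_k=H_k(p_k)=0$, these sums are asymptotically centered, and a central-limit-type result for Gibbs measures with summable variations forces $S_n^{T_k}\phi$ to oscillate on order $\sqrt{n}$ rather than drift to $-\infty$; hence $\theta_k'(x)\neq 0$ for $\mu_k$-a.e.\ $x$. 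This last step -- upgrading the integral constraint $\int\phi\,d\mu_k=0$ to the almost-sure non-convergence $S_n^{T_k}\phi\not\to-\infty$ -- is the main technical obstacle, and is handled via Gordin/Liverani-type CLT arguments available because of the summable variations of $T$ and $T_k$ together with the H\"older regularity of $\phi$ on the finite attractor $\Lambda_{N,k}$.
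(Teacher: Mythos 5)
Your proposal shares the same high-level architecture as the paper's proof: reduce to a compact subsystem, produce a measure of dimension close to $1$ on which $\int\log|T_k'|\,d\mu=\int\log|T'|\circ\theta_k\,d\mu$, and then invoke a fluctuation theorem to conclude that the Birkhoff sums of $\log|T_k'|-\log|T'|\circ\theta_k$ do not drift to $-\infty$ almost surely. However, the hard part of the theorem is precisely the step you hand-wave, and I believe there is a genuine gap there.

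The claim that one can ``enlarge the symbol set with tail symbols so that $H_k$ changes sign (cost-free in dimension by (1))'' is not justified, and I do not think assumption (1) alone delivers it. Assumption (1) controls the tail of the \emph{limit} map $T$, but the Lyapunov exponent of your Bernoulli measures $\mu_{p,k}$ with respect to $T_k$ is $-\int\phi_k\,d\nu_p$, which is governed by the tails of $T_k$. Putting even a tiny weight on a far-out symbol $N'$ adds roughly $p_{N'}\,|\log a_{N',k}|$ to the Lyapunov exponent, and there is nothing in (1) or (2) forcing this to be harmless for the ratio $h/\lambda$. Moreover, starting from the Bowen vector for $T_k$ restricted to $\Sigma_N$, you have no control over the \emph{sign} of $H_k(p_0)$, nor an argument that $H_k$ vanishes anywhere on the simplex of high-dimensional Bernoulli measures; the implicit-function step presupposes both a sign change and a curve along which the dimension stays high. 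By contrast, the paper does not start from the Bowen measure of a finite subsystem at all. It starts from the a.c.i.m.\ $\mu_\phi$ for the limiting map $T$ (which has dimension exactly $1$), truncates its Gibbs distribution at a carefully chosen cut-off $N_k$ that is shown to satisfy $N_k\to\infty$, and reweights slightly so that $\int\phi_k\,d\mu^m_k<\int\phi\,d\mu^m_k$ while the dimension stays $\geq\frac{1-3\delta}{1+3\delta}$ (Lemmas~\ref{lma:step1} and \ref{lma:step1bernoulli}). It then converts this strict inequality into an equality via the pressure function $q\mapsto P_N(q(\phi_k-\phi)-t\phi_k)$: boundedness from below, the IVT on the derivative of pressure, and the finite-approximation property produce a Gibbs measure on some $\Sigma_N$ with $\int(\phi_k-\phi)\,d\mu=0$ and dimension $>t$ (Lemmas~\ref{lem:pressure}--\ref{step2}). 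This thermodynamic construction automatically gives the non-coboundary property of $\phi_k-\phi$ on $\Sigma_N$ as a by-product, whereas in your scheme the non-degeneracy of the variance is asserted without argument.

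Two smaller points. First, ``a central-limit-type result \dots forces $S_n\phi$ to oscillate'' is not quite enough: the CLT is a distributional statement and does not by itself yield the almost-sure recurrence $\limsup_n S_n(\phi_k-\phi)=\infty$; the paper uses the law of the iterated logarithm (Lemma~\ref{lem:lawofiterated}, via Denker--Philipp), which does give the almost-sure conclusion. Second, the characterisation ``$\theta_k'(x)=0$ iff $S_n^{T_k}\phi(x)\to-\infty$'' needs to be stated carefully -- the paper works with the cleaner implication ``$\limsup_n e^{S_n(\phi_k-\phi)}=\infty$ and $\liminf=0$ implies $\theta_k'$ does not exist'' -- but this can be fixed. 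The essential gap is the measure construction.
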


Let us make a few remarks on the conditions (1) and (2) required in Theorem \ref{mainthm}. The condition (1) holds if the countable Markov map $T$ has at most a polynomially fat tail, in the sense that the lengths $|f_i[0,1]| = O(i^{-p})$ as $i \to \infty$ for some $p > 1$. Thus (1) yields in particular that the absolutely continuous invariant measure for $T$ has finite entropy, but it is not an equivalent condition. The condition (2) on variation in Theorem \ref{mainthm} is satisfied if the inverse branches of $T_k$ are linear, i.e., when the maps $T_k$ are $\alpha$-L\"uroth maps for certain partitions $\alpha$ in the notation of \cite{KMS}. Thus our result gives rather general conditions to have such a perturbation theorem for $\alpha$-L\"uroth maps, provided that the map being perturbed has a thin enough tail.

In the non-linear case, the Gauss map will satisfy the tail assumption (1) we impose, so the perturbation theorem is valid provided we have a uniform bound (2) over the sums of variations on the family of maps converging to the Gauss map. Furthermore, the conditions in Theorem \ref{mainthm} are weak enough for us to apply Theorem \ref{mainthm} to the study of a certain family of intermittent maps in non-uniformly hyperbolic dynamics known as the \textit{Manneville-Pomeau maps} $M_\alpha : [0,1] \to [0,1]$,
$$M_\alpha (x) := x + x^{1+\alpha} \mod 1, \quad x \in [0,1],$$
for a parameter $0 < \alpha < \infty$. The jump transformations (in other words, ``accelerated dynamics'' or induced maps) for $M_\alpha$ give us countable Markov maps that have polynomial tail and satisfy the assumptions of Theorem \ref{mainthm} when varying the parameter $\alpha$ for the maps $M_\alpha$, since this means pointwise convergence of the inverse branches.
Thus we obtain the following corollary to Theorem \ref{mainthm}:

\begin{corollary}\label{cor:mp}
Let $\alpha > 0$. Then as $\beta \to \alpha$ we have
$$\Hd \{x : \theta_{M_\beta,M_\alpha}'(x) \neq 0\} \to 1,$$
where $\theta_{M_\beta,M_\alpha}$ is the topological conjugacy between the Manneville-Pomeau maps $M_\beta$ and $M_\alpha$.
\end{corollary}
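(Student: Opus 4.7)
The plan is to reduce the statement to Theorem \ref{mainthm} by inducing the Manneville-Pomeau maps to countable Markov maps and then transferring the conclusion back. For each $\alpha > 0$ I would consider the first-return map $T_\alpha$ of $M_\alpha$ to the base interval $[c_\alpha, 1]$, where $c_\alpha$ denotes the point at which the right branch of $M_\alpha$ lands on $0$; this gives a countable Markov map with countably many full branches indexed by return times, and after an affine rescaling to $[0,1]$ this fits the framework of the paper.

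The main task is then to verify the hypotheses of Theorem \ref{mainthm} uniformly in a neighborhood of $\alpha$. For (1), the classical asymptotic $g_\alpha^n(c_\alpha) \sim n^{-1/\alpha}$ for the iterated left inverse branch of $M_\alpha$ gives $|f_{i,\alpha}[0,1]| \sim i^{-(1+1/\alpha)}$, so the condition is satisfied for any $t \in (\alpha/(1+\alpha), 1)$, which exists since $\alpha/(1+\alpha) < 1$. For (2), each inverse branch $f_{i,\beta}$ of $T_\beta$ is a composition of iterates of the two smooth inverse branches of $M_\beta$; since $M_\beta$ and $M_\beta'$ depend smoothly on $\beta$ away from $0$, I would use standard Koebe-type distortion estimates, uniform for $\beta$ in a compact neighborhood of $\alpha$, to bound $\sum_n \var_n(-\log|T_\beta'|)$ uniformly. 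Pointwise convergence $f_{i,\beta} \to f_{i,\alpha}$ as $\beta \to \alpha$ then follows from continuous dependence of iterates of $M_\beta$ on the parameter.

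Having verified these hypotheses, Theorem \ref{mainthm} yields $\Hd \{x : \theta_{T_\beta,T_\alpha}'(x) \neq 0\} \to 1$ as $\beta \to \alpha$ for the conjugacy $\theta_{T_\beta,T_\alpha}$ between the induced maps. The final step is to identify this conjugacy, on the base interval, with the restriction of $\theta_{M_\beta,M_\alpha}$, which follows from uniqueness of the orientation-preserving topological conjugacy between the expanding first-return systems. Since $\{x \in [0,1] : \theta_{M_\beta,M_\alpha}'(x) \neq 0\}$ contains $\{x \in [c_\beta, 1] : \theta_{T_\beta,T_\alpha}'(x) \neq 0\}$, its Hausdorff dimension is at least that of the latter, and at most $1$ trivially, so the limit equals $1$.

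The main obstacle I expect is verifying (2) uniformly in $\beta$: the inverse branches of $T_\beta$ correspond to orbit segments passing arbitrarily close to the indifferent fixed point where $M_\beta$ loses uniform hyperbolicity, so the distortion estimates must carefully track how contraction constants along these orbit segments depend on $\beta$, yielding $\beta$-uniform control on the tail of $\sum_n \var_n(-\log|T_\beta'|)$ rather than merely pointwise control in $\beta$.
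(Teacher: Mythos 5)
Your proposal follows essentially the same route as the paper: induce $M_\alpha$ and $M_\beta$ to countable Markov maps on a base interval, verify the three hypotheses of Theorem~\ref{mainthm} (pointwise convergence, polynomial tail, $\beta$-uniform bound on $\sum_n \var_n(-\log|T_\beta'|)$), and then transfer the conclusion back via the identity of the conjugacies. You correctly identify that the $\beta$-uniform variation bound is the crux; the paper resolves exactly this point not by a from-scratch Koebe argument near the indifferent fixed point, but by citing Nakaishi's estimates \cite[Lemmas 2.1 and 2.2]{nakaishi}, which give $\var_n(\phi_\beta) \leq C(\beta)\, n^{-p(\beta)}$ with $p(\beta) > 1$ and $C(\beta), p(\beta)$ depending continuously on $\beta$, so that $\sum_n \var_n(\phi_\beta) \leq C(\beta)\,\zeta(p(\beta))$ is bounded by a continuous function and hence uniformly bounded on a compact neighbourhood of $\alpha$ — this converts the uniformity problem you flag into a simple continuity-plus-compactness observation.
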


Corollary \ref{cor:mp} concerns the topological stability for $M_\alpha$ when varying $\alpha$. A related area of study for Manneville-Pomeau maps is the measure theoretical \textit{statistical stability}, where the behaviour of the absolutely continuous invariant measure for $M_\alpha$ is studied when varying $\alpha$, see for example the recent works by Freitas and Todd \cite{FT} and Baladi and Todd \cite{BT}.

There are also other natural ways to measure the singularity of the conjugacies $\theta_k$ and the effect of perturbations to them. However, we will see that the continuity as presented in Theorem \ref{mainthm} fails for these quantities. We will consider three possible examples below.

Firstly, observe that the topological conjugacies $\theta_k$ are all H\"older continuous. Thus one might expect that the H\"older exponent $\kappa(\theta_k)$ of $\theta_k$ (see Section \ref{sec:prelim} for definitions) would converge to $1$, which is the H\"older exponent of the identity. However, this can be made to fail:

\begin{proposition}\label{prop:holder}
There exist examples of $T_k$ and $T$ satisfying the assumptions of Theorem \ref{mainthm} such that the H\"older exponents $\kappa(\theta_k)$ of $\theta_k$ satisfy
$$\lim_{k \to \infty} \kappa(\theta_k) = 0.$$
\end{proposition}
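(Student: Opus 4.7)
The plan is to exhibit an explicit family of $\alpha$-Lüroth maps (linear branches) in which $T$ has a polynomial tail just thin enough for condition (1), while each $T_k$ agrees with $T$ on the first $k$ cylinders but has an exponential tail thereafter. Since the branches are affine, condition (2) is automatic; pointwise convergence of $f_{i,k}$ to $f_i$ is built in; yet a deep cylinder of $T_k$ indexed by $i=2k$ becomes exponentially shorter than the corresponding cylinder of $T$, which forces the Hölder exponent of $\theta_k$ to zero.

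Concretely, fix $\eps\in(0,1)$ and let $T$ be the $\alpha$-Lüroth map with level-$1$ cylinder lengths $a_i := c\cdot i^{-1-\eps}$ where $c=1/\zeta(1+\eps)$. Define $T_k$ to be the $\alpha$-Lüroth map with lengths
\[
a_{i,k} := \begin{cases} a_i, & i \le k,\\ \tfrac{s_k}{2}\cdot 2^{-(i-k-1)}, & i>k,\end{cases}
\qquad\text{where } s_k := \sum_{i>k} a_i.
\]
By construction $\sum_i a_{i,k}=1$ and $a_{i,k}=a_i$ whenever $k\ge i$, so the left endpoints $f_{j,k}(0)=\sum_{\ell>j}a_{\ell,k}$ eventually coincide with $f_j(0)$ and hence $f_{i,k}\to f_i$ pointwise. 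Condition (1) on $T$ holds for any $t\in(1/(1+\eps),1)$ since $\sum_i a_i^t = c^t\zeta((1+\eps)t)<\infty$. Condition (2) is trivial because every $T_k$ has affine branches, so $\var_n(-\log|T_k'|)=0$ for all $n\ge 1$.

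To bound the Hölder exponent from above, I would use the symbolic self-similarity available in the linear case. Consider the level-$n$ cylinder of $T_k$ with constant address $(2k,2k,\ldots,2k)$: because the branches are linear, this cylinder has length exactly $a_{2k,k}^n$, and its image under $\theta_k$ is the corresponding level-$n$ cylinder of $T$ with length $a_{2k}^n$. If $\theta_k$ were $\kappa$-Hölder with constant $C$, applying the inequality to the two endpoints would give $a_{2k}^n\le C\,a_{2k,k}^{n\kappa}$; taking logs, dividing by $n$ and letting $n\to\infty$ yields
\[
\kappa(\theta_k)\;\le\;\frac{\log a_{2k}}{\log a_{2k,k}}.
\]
Now $\log a_{2k}=-(1+\eps)\log(2k)+\log c=O(\log k)$, whereas
\[
\log a_{2k,k}=\log\tfrac{s_k}{2}-(k-1)\log 2\;\sim\;-k\log 2
\]
(since $s_k\sim (c/\eps)k^{-\eps}$). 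Therefore $\kappa(\theta_k)\le O((\log k)/k)\to 0$ as $k\to\infty$, completing the proof.

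I do not expect any real obstacle here; the whole content lies in choosing the family so that (1) and (2) hold for free while a single symbol (namely $i=2k$) witnesses a catastrophic expansion of $\theta_k$ on deep cylinders. The key structural remark is that $\alpha$-Lüroth systems multiply cylinder lengths, which turns a single bad ratio into an arbitrarily bad Hölder obstruction by iterating along the word $(2k)^n$; this is the feature that fails for the Hausdorff dimension of $\{\theta_k'\ne 0\}$ in Theorem \ref{mainthm} and explains why the two notions of singularity behave differently under pointwise perturbation.
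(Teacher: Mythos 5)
Your construction is correct and the underlying mechanism is the same as the paper's: stay inside the class of $\alpha$-L\"uroth maps so that condition (2) holds for free, make one of $T_k$'s level-one cylinders exponentially thinner than the corresponding cylinder of $T$, and let iteration amplify the mismatch. The paper's version is a little leaner in the construction but leans on an external result: it takes $T=L_{\alpha_D}$ with $a_n=2^{-n}$, sets $a_k(\alpha_k)=2^{-k^2}$ while adjusting $a_{k+1}(\alpha_k)$ to conserve mass and leaving every other length unchanged, and then simply invokes \cite[Lemma 2.3]{KMS}, which gives the exact H\"older exponent of an $\alpha$-L\"uroth conjugacy as $\kappa(\theta_k)=\inf_{n}\log a_n(\alpha_D)/\log a_n(\alpha_k)$; evaluating at $n=k$ yields $\kappa(\theta_k)=1/k$. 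You instead take $T$ with a polynomial tail, replace the entire tail of $T_k$ past level $k$ by a geometric sequence, and re-derive the needed bound from scratch by iterating the constant word $(2k)^n$: since the L\"uroth branches are affine, the $T_k$-cylinder of $(2k)^n$ has length exactly $a_{2k,k}^n$ and $\theta_k$ carries its endpoints to those of the $T$-cylinder of length $a_{2k}^n$, so any H\"older inequality $a_{2k}^n\le C\,a_{2k,k}^{n\kappa}$ forces $\kappa\le\log a_{2k}/\log a_{2k,k}=O((\log k)/k)$. Your route is self-contained (it does not need \cite[Lemma 2.3]{KMS}) but produces only an upper bound on $\kappa(\theta_k)$ rather than its exact value; for Proposition \ref{prop:holder} an upper bound is of course all that is required. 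Both examples verify conditions (1), (2) and pointwise convergence for the same reasons (affine branches; the first $k$ branches of $T_k$ equal those of $T$, so $f_{i,k}=f_i$ for $k\ge i$), so there is no gap in either.
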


A similar behaviour can be observed also in the following setting. If $\mu$ is the absolutely continuous $T$-invariant measure, then one might also expect that the Hausdorff dimensions $\Hd(\mu \circ \theta_k)$ of the $\theta_k$-preimages of the measure $\mu$ would converge to $1$. On the other hand, the maps $T_k$ can be chosen such that the the dimensions do not converge to the expected value:

\begin{proposition}\label{prop:hausdorff}
There exist examples of $T_k$ and $T$ satisfying the assumptions of Theorem \ref{mainthm} such that the Hausdorff dimensions of $\mu \circ \theta_k$ satisfy
$$\lim_{k \to \infty} \Hd (\mu \circ \theta_k) = 0.$$
\end{proposition}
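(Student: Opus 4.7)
The strategy is to exploit the ergodic dimension formula $\Hd(\nu)=h(\nu)/\chi(\nu)$. Since topological conjugacy preserves entropy, $h(\mu\circ\theta_k)=h(\mu)$ for every $k$, so to force $\Hd(\mu\circ\theta_k)\to 0$ it suffices to arrange that the Lyapunov exponent $\chi(\mu\circ\theta_k)$ with respect to $T_k$ diverges, while keeping the hypotheses of Theorem \ref{mainthm} intact. The non-compactness of the countable-alphabet setting gives the flexibility to do this by cramming more and more expansion into a single branch whose index drifts to infinity.

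Concretely I would take $T$ to be the dyadic $\alpha$-L\"uroth map with partition $A_i=[2^{-i},2^{1-i}]$. Then $\mu$ is Lebesgue, $h(\mu)=\sum_i i2^{-i}\log 2=2\log 2<\infty$, condition (1) of Theorem \ref{mainthm} is trivial since $\sum_i 2^{-it}<\infty$ for every $t>0$, and $T$ has zero variation because it is piecewise linear on the first-level cylinders. For each $k$ I would define an $\alpha_k$-L\"uroth map $T_k$ whose partition $(A_i^{(k)})$ agrees with $(A_i)$ for $i<k$, has an extremely short $k$-th piece $|A_k^{(k)}|=\epsilon_k:=2^{-4^k}$, and absorbs the excess in $A_{k+1}^{(k)}$ by setting $|A_{k+1}^{(k)}|=2^{-k}+2^{-k-1}-\epsilon_k$. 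Each $T_k$ is again piecewise linear, so $\var_n(-\log|T_k'|)=0$ for every $n\geq 1$ and condition (2) holds uniformly with bound $0$; moreover, for each fixed $i$, $|A_i^{(k)}|\to 2^{-i}$ and the left endpoints converge as well, so $f_{i,k}\to f_i$ pointwise.

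The dimension calculation is then direct. Using $\theta_k(A_i^{(k)})=A_i$ I obtain $(\mu\circ\theta_k)(A_i^{(k)})=\mu(A_i)=2^{-i}$, and since $|T_k'|$ is constant on each first-level cylinder,
$$\chi(\mu\circ\theta_k)=\sum_{i}2^{-i}\log\frac{1}{|A_i^{(k)}|}\;\geq\; \mu(A_k)\log\frac{1}{\epsilon_k}=2^{-k}\cdot 4^k\log 2=2^k\log 2\;\longrightarrow\;\infty.$$
Combining this with $h(\mu\circ\theta_k)=h(\mu)=2\log 2$ and the ergodic dimension formula yields $\Hd(\mu\circ\theta_k)=h(\mu)/\chi(\mu\circ\theta_k)\to 0$.

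The hard part will be justifying the dimension formula in this non-compact, unbounded-potential setting, where the standard references typically assume bounded distortion or finite-alphabet hypotheses. Here the difficulties essentially collapse because the branches of $T_k$ are affine: I would run a countable-alphabet Shannon--McMillan--Breiman argument for the ergodic $T_k$-invariant measure $\mu\circ\theta_k$ together with the exact identification of its order-$n$ Bowen ball around a generic $x$ with the corresponding $n$-th level cylinder, whose length factors as a product of the $|A_{i_j}^{(k)}|$. This gives pointwise dimension $h(\mu)/\chi(\mu\circ\theta_k)$ at $\mu\circ\theta_k$-almost every point, and hence the required upper bound on $\Hd(\mu\circ\theta_k)$.
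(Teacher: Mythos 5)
Your proof is correct, and it takes a genuinely different construction from the one in the paper, though the key identity behind both is the Mauldin--Urba\'nski dimension formula $\Hd(\mu\circ\theta_k) = h(\mu\circ\theta_k,T_k)/\lambda(\mu\circ\theta_k,T_k)$ (the paper's Proposition \ref{prop:measdim}) together with the conjugacy invariance $(\mu\circ\theta_k)(I_\i^{T_k}) = \mu(I_\i^T)$. The paper takes $T$ to be the $\alpha_L$-L\"uroth map with polynomial tail $a_i \sim i^{-2}$ and modifies the \emph{entire} tail of the partition (all indices $i>k$) so that $a_i(\alpha_k)$ decays exponentially; this makes $\lambda(\mu\circ\theta_k,T_k) = +\infty$ for every fixed $k$, hence $\Hd(\mu\circ\theta_k)=0$ exactly, and the limit is trivial. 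You instead take $T$ to be the dyadic L\"uroth map and perturb only two partition elements, making one of them super-exponentially small ($2^{-4^k}$); this keeps $\lambda(\mu\circ\theta_k,T_k)$ finite but forces it to grow like $2^k\log 2$, so $\Hd(\mu\circ\theta_k)\sim 2^{1-k}$. Your version is somewhat closer in spirit to Section \ref{sec:holder} of the paper and has the small bonus of giving an explicit rate of convergence. One remark: your final paragraph worrying about justifying the dimension formula in the non-compact piecewise-affine setting is unnecessary work --- that is exactly the content of the paper's Proposition \ref{prop:measdim} (Theorem 4.4.2 of Mauldin--Urba\'nski), whose only hypothesis is ergodicity of the $T_k$-invariant measure together with $h(\mu\circ\theta_k,T_k)<\infty$, both of which you already have. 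You can simply cite it instead of running a Shannon--McMillan--Breiman argument from scratch.
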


Moreover, denoting by $\mu_k$ the absolutely continuous $T_k$-invariant measure, we also consider the entropy (that is, the Lyapunov exponent) of the absolutely continuous invariant measures for the maps $T_k$ and $T$ respectively. If we would have that $h(\mu_k,T_k) \to h(\mu,T)$, instead of pointwise convergence of the inverse branches of $T_k$, it would be considerably easier to prove the statement of the main result Theorem \ref{mainthm}. However, $h(\mu_k,T_k) \to h(\mu,T)$ is  too strong a property to be deduced from pointwise convergence, as the following result shows.

\begin{proposition}\label{prop:lyapunov}
There exist examples of $T_k$ and $T$ satisfying the assumptions of Theorem \ref{mainthm} such that the entropy $h(\mu,T) < \infty$ but the limit
$$\lim_{k \to \infty}h(\mu_k,T_k) = \infty.$$
\end{proposition}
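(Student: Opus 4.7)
The plan is to exhibit an explicit family using $\alpha$-L\"uroth maps, for which the invariant measure is Lebesgue and the thermodynamic quantities reduce to elementary combinatorial sums. In this piecewise linear setting, the potential $-\log|T_k'|$ is constant on each first-generation cylinder, hence $\var_n(-\log|T_k'|)=0$ for every $n\ge 1$, so condition~(2) of Theorem~\ref{mainthm} is trivially satisfied with uniform bound $0$. The whole task therefore reduces to choosing the partitions cleverly.

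Take $T$ to be the dyadic L\"uroth map with $|I_i|=2^{-i}$. Condition~(1) holds with any $t>0$ since $\sum_i 2^{-it}<\infty$, and by Rokhlin's formula
\[
h(\mu,T)=\sum_{i\ge 1}|I_i|\log(1/|I_i|)=(\log 2)\sum_{i\ge 1} i\cdot 2^{-i}=2\log 2<\infty.
\]
For each $k\in\N$, define $T_k$ as the L\"uroth-type map whose first $k$ inverse branches coincide with those of $T$, so that $I_{i,k}=I_i=[2^{-i},2^{-i+1}]$ for $i\le k$, and whose remaining inverse branches partition the leftover interval $[0,2^{-k}]$ into $M_k$ consecutive pieces of equal length $\delta_k$ followed by a thin geometric tail of lengths $\delta_k/2,\delta_k/4,\ldots$ (included only to keep the total number of branches countably infinite, as required by the setup). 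The length constraint forces $M_k=2^{-k}/\delta_k-1$.

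Since $I_{i,k}=I_i$ for every $k\ge i$, the inverse branches $f_{i,k}$ are in fact eventually equal to $f_i$ for every fixed $i$, so the pointwise convergence assumption holds trivially. A direct computation yields
\[
h(\mu_k,T_k)=(\log 2)\sum_{i=1}^k i\cdot 2^{-i}+2^{-k}\log(1/\delta_k)+O\bigl(\delta_k\log(1/\delta_k)\bigr),
\]
where the first term remains bounded by $2\log 2$ and the last is negligible as $\delta_k\to 0$. Choosing $\delta_k:=2^{-k\cdot 2^k}$ makes the middle term equal to $k\log 2$, so $h(\mu_k,T_k)\to\infty$ while $h(\mu,T)=2\log 2<\infty$, as required.

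The only conceptual point, and the reason this is easy rather than hard, is that pointwise convergence $f_{i,k}\to f_i$ for each fixed $i$ places no restriction whatsoever on how $T_k$ arranges its high-index branches for small $k$; the tail of $T_k$ can be packed arbitrarily densely with tiny branches. The main obstacle one might anticipate, namely preserving uniform summable variations, simply dissolves in the piecewise linear setting, and the tail assumption~(1) is a condition only on the limit map $T$, which we have chosen to satisfy it very comfortably.
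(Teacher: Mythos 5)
Your proposal is correct, and it follows essentially the same strategy as the paper's proof: take $T$ to be the dyadic L\"uroth map (so $h(\mu,T)=2\log 2 < \infty$ and both conditions (1) and (2) of Theorem \ref{mainthm} are trivially satisfied because the potential is constant on first-generation cylinders), keep the first $O(k)$ inverse branches of $T_k$ identical to those of $T$ (so pointwise convergence is automatic, indeed eventual equality), and modify the tail of the partition inside the leftover interval $[0,2^{-k}]$ to make the entropy blow up. The one place where you diverge from the paper is the choice of tail: the paper subdivides $[0,t_k]$ using a logarithmically spaced partition with $a_i(\alpha_k)=O(1/\log i)$, which makes $h(\mu_k,T_k)=\infty$ for \emph{every} $k$, so the limit is trivially $\infty$; you instead use finitely many blocks of equal length $\delta_k$ plus a tiny geometric remainder, giving finite entropies $h(\mu_k,T_k)\approx 2^{-k}\log(1/\delta_k)+O(1)$ that can be tuned to diverge (your choice $\delta_k = 2^{-k2^k}$ gives $\approx k\log 2$). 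Your variant is arguably more informative, since it exhibits a genuine divergence of finite quantities rather than a constant sequence of $+\infty$'s, and it also makes the mechanism transparent: pointwise convergence of branches says nothing about the high-index tail of $T_k$ for small $k$. One minor slip: the last term in your entropy expansion is actually $O(\delta_k)$ rather than $O(\delta_k\log(1/\delta_k))$, since the $\delta_k\log(1/\delta_k)$ contributions from the geometric tail and the length deficit in the block count cancel exactly; this does not affect the conclusion.
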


We remark that in the uniformly hyperbolic compact case, i.e., in the situation of  finitely many branches with uniform expansion rate, all these notions can be shown to be continuous under pointwise perturbations. The heuristic reason for Propositions \ref{prop:holder}, \ref{prop:hausdorff} and \ref{prop:lyapunov} is that they represent notions that are very sensitive to the tail behaviour of the countable Markov maps $T_k$. On the other hand, the idea of the proof of Theorem \ref{mainthm} is that we approximate the infinite systems considered by a finite branch system and in this approximation the precise nature of the tails is not so important, except in terms of the tail of the limiting map $T$ (the tail condition (1) of Theorem \ref{mainthm}). Thus the Hausdorff dimension of non-differentiability points will not be as sensitive to the tails as the H\"older exponent $\kappa(\theta_k)$, Hausdorff dimension of $\mu \circ \theta_k$ or the entropy $h(\mu_k,T_k)$.

The limit obtained in Theorem \ref{mainthm} does not tell us about the possible rate of the numbers $\dim_{\mathrm{H}}\{x:\theta_{k}'(x)\neq 0\}$ converging to $1$ as $k$ approaches infinity. If we restrict the class of countable Markov maps we consider, then this can be addressed and the Hausdorff dimension can be explicitly computed. For this, we will consider a class of countable Markov maps similar to those arising from the \textit{Salem family} considered in \cite{JKPS}. Fix $0 < \tau < 1$ and define the map $T_\tau$ to be the countable Markov map with decreasing linear branches on each interval $(\tau^k,\tau^{k-1}]$, $k \in \N$. In the language of $\alpha$-L\"uroth maps \cite{KMS}, the map $T_\tau$ is the $\alpha$-L\"uroth map for the partition $\alpha = \{(\tau^k,\tau^{k-1}] : k \in \N\}$. We obtain the following theorem.

\begin{theorem}
\label{mainthm2}
Fix $0 < \tau \neq \tau' < 1$ and let $\theta_{\tau,\tau'}$ be the topological conjugacy between $T_\tau$ and $T_{\tau'}$. Then
$$\Hd \{x:\theta_{\tau,\tau'}'(x)\neq 0\} = \frac{p_{\tau,\tau'} \log p_{\tau,\tau'}+(1-p_{\tau,\tau'})\log(1-p_{\tau,\tau'})}{p_{\tau,\tau'}\log \tau+(1-p_{\tau,\tau'})\log(1-\tau)},$$
where
$$p_{\tau,\tau'} := \frac{\log (1-\tau') - \log(1-\tau)}{\log \tau - \log(1-\tau) - \log \tau' + \log (1-\tau')}.$$
\end{theorem}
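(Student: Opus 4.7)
The plan is to use the piecewise linearity of $T_\tau$ and $T_{\tau'}$ to encode the problem on the full shift over $\N$, reducing the dimension question to a constrained entropy-maximisation over Bernoulli measures. Writing $I_i^\tau=(\tau^i,\tau^{i-1}]$ for the $i$-th cylinder of $T_\tau$, linearity yields $|C^\tau_{i_1\cdots i_n}|=\prod_{j=1}^{n}\tau^{i_j-1}(1-\tau)$, and hence
\[
\frac{|C^{\tau'}_{i_1\cdots i_n}|}{|C^{\tau}_{i_1\cdots i_n}|}=\exp\Big(\sum_{j=1}^{n}\psi_{i_j}\Big),\qquad \psi_i:=\log\tfrac{1-\tau'}{1-\tau}+(i-1)\log\tfrac{\tau'}{\tau}.
\]
Because the cylinders form a differentiation basis for the monotone homeomorphism $\theta_{\tau,\tau'}$, a short argument shows that, for a point $x$ with $T_\tau$-itinerary $(i_j)$, the derivative $\theta_{\tau,\tau'}'(x)$ is nonzero precisely when the partial sums $\sum_{j=1}^{n}\psi_{i_j}$ do not tend to $-\infty$. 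This is the analogue of the symbolic criterion used for $\alpha$-Farey--Minkowski functions in~\cite{smnew}.

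The next step is a multifractal-type identity: by finite-alphabet approximation (truncating to $\{1,\dots,N\}$, applying the compact-shift multifractal theorem, and letting $N\to\infty$ while using the geometric decay $|I_i^\tau|=\tau^{i-1}(1-\tau)$ to dominate tail contributions) one obtains
\[
\Hd\{x:\theta_{\tau,\tau'}'(x)\neq 0\}=\sup_{\mathbf p}\frac{h(\mu_{\mathbf p})}{\lambda_\tau(\mu_{\mathbf p})},
\]
where the supremum runs over Bernoulli probability vectors $\mathbf p$ satisfying $\sum_i p_i\psi_i=0$; concavity of the dimension spectrum in the Birkhoff average means the sup over $\sum p_i\psi_i\ge 0$ is attained at equality, since the unconstrained maximiser of dimension is the acim and it has negative average of $\psi$ whenever $\tau\ne\tau'$. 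Since $\psi_i$ is affine in $i$, this equality is equivalent to fixing the mean $m:=\sum_i(i-1)p_i$, which in turn fixes the Lyapunov exponent $\lambda_\tau(\mu_{\mathbf p})=-m\log\tau-\log(1-\tau)$, so the problem reduces to maximising $h(\mu_{\mathbf p})$ subject to a prescribed mean on $\N$.

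The classical solution is the geometric distribution $p_i=(1-q)q^{i-1}$ with $q=m/(m+1)$; a short algebraic manipulation identifies this $q$ with the constant $p_{\tau,\tau'}$ from the statement, and substituting into $h/\lambda_\tau$ recovers the closed-form expression in the theorem. The main obstacle will be the upper bound in the multifractal identity: the lower bound is supplied effortlessly by the extremal geometric measure, which is supported on the zero-average level set by Birkhoff's theorem, with $\dim_\mathrm{H}\mu_{\mathbf p}=h/\lambda_\tau$ following from Billingsley's lemma; the upper bound, in contrast, demands the finite-alphabet approximation sketched above, handled carefully so that the tail error vanishes uniformly as $N\to\infty$.
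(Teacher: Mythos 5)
Your proposal takes a genuinely different route from the paper. The paper's crucial step for Theorem \ref{mainthm2} is the observation that $\theta_{\tau,\tau'}$ coincides with the conjugacy between the Farey maps $F_\tau$ and $F_{\tau'}$ (since $T_\tau$ is the jump transformation of $F_\tau$), reducing the problem to a full shift on \emph{two} symbols with H\"older potentials. One then applies Proposition \ref{prop:salemthermo} --- an adaptation of \cite{JKPS} to the case of one increasing and one decreasing branch --- and the dimension drops out as $\tilde\beta(s_0)=\beta(s_0)+s_0$ at the parameter where $\beta'(s_0)=-1$, realised by a $(p,1-p)$-Bernoulli measure. You instead stay on the infinite-alphabet system generated by $T_\tau$ and pose a constrained entropy maximisation over Bernoulli vectors on $\N$. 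Your reduction is algebraically sound: I checked that the acim has $\sum p_i\psi_i<0$ whenever $\tau\neq\tau'$, that the constraint $\sum p_i\psi_i=0$ pins down $m=\sum(i-1)p_i$, that the geometric maximiser gives $q=m/(m+1)=p_{\tau,\tau'}$, and that $h/\lambda_\tau$ then simplifies to the stated expression. So the final answer agrees.

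However, the upper bound is a real gap rather than a detail. Your multifractal identity
\[
\Hd\{x:\theta_{\tau,\tau'}'(x)\neq 0\}=\sup\Big\{\tfrac{h(\mu_{\mathbf p})}{\lambda_\tau(\mu_{\mathbf p})}\ :\ \textstyle\sum_i p_i\psi_i=0\Big\}
\]
is stated with the upper bound justified only by ``finite-alphabet approximation \dots\ handled carefully so that the tail error vanishes uniformly.'' This is precisely what must be proved, and it is not routine here: the potential $\psi_i=\log\tfrac{1-\tau'}{1-\tau}+(i-1)\log\tfrac{\tau'}{\tau}$ is unbounded (and unbounded \emph{above} when $\tau'>\tau$), so one cannot simply truncate to $\{1,\dots,N\}$ and pass to the limit without a covering argument controlling the cylinders that contain high symbols. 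That tail analysis is exactly the sort of infinite-alphabet complication that the paper's Farey-map trick removes at the outset, since on the two-symbol shift every potential is H\"older and compactly supported. Secondly, the claim that $\theta_{\tau,\tau'}'(x)\neq 0$ ``precisely when'' $S_n\chi(x)\not\to-\infty$ is a little too strong as stated; the correct statement is the two-sided sandwich in Lemma \ref{lem2.4} and Remark \ref{remp10}, and one needs the law of the iterated logarithm for the optimal measure to land it inside the smaller set. That is a fixable imprecision, but the unproved infinite-alphabet upper bound is where the approach would require substantial additional work compared with the paper's reduction.
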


Due to the choice of the specific countable Markov maps $T_\tau$, the proof of Theorem \ref{mainthm2} is reduced to the study of conjugacies between tent-like expanding maps with two full branches, one increasing and one decreasing. A similar result was obtained in \cite[Theorem 1.1]{JKPS}, where the authors consider a family of expanding maps with finitely many increasing full branches. However, as we have one increasing and one decreasing branch, the proof  in our situation is rather simpler than in \cite{JKPS}.

\subsection{Organisation of the paper} The paper is organised as follows. In Sections \ref{sec:prelim} and \ref{sec:thermo} we will give all the necessary background results from dimension theory and thermodynamic formalism. In Section \ref{sec:mainthm} we will give the proof of Theorem \ref{mainthm}. In Section \ref{sec:holderentropy} we present how to achieve Propositions \ref{prop:holder}, \ref{prop:hausdorff} and \ref{prop:lyapunov}. In Section \ref{sec:manpom} we discuss the Manneville-Pomeau example further and prove Corollary \ref{cor:mp}, and finally, in Section \ref{sec:geometric} we prove Theorem \ref{mainthm2}.

\section{Preliminaries and notations}\label{sec:prelim}

\subsection{Interval maps and modeling with $\N^\N$} A \textit{countable Markov map} $T : [0,1] \to [0,1]$ is defined with the help of its inverse branches. We consider the situation where for each $i\in\N$, there exist maps $f_i:[0,1]\to [0,1]$ which are continuous and strictly decreasing on $[0,1]$ and differentiable on $(0,1)$. We further assume that there exists $m\in\N$ and $\xi<1$ such that for all $(i_1,\ldots,i_m)\in\N^m$ we have that $|(f_{i_1}\circ\cdots\circ f_{i_m})'(x)|\leq\xi$ for all $x\in (0,1)$. We will also suppose that $f_1(0)=1$, $f_{i}(1)=f_{i+1}(0)$ for all $i\in\N$ and $\lim_{i\to\infty}f_i(0)=0$ or alternatively that $f_1(0)=0$, $f_{i}(0)=f_{i+1}(1)$ for all $i\in\N$ and $\lim_{i\to\infty}f_i(0)=0$. Thus $\bigcup_{i=1}^{\infty} f_i([0,1])=(0,1]$ and if $i\neq j$ then $f_i((0,1))\cap f_j((0,1))=\emptyset$. We define an expanding map $T: [0,1]\to [0,1]$ by setting
$$T(x):=\left\{
          \begin{array}{ll}
            f_i^{-1}(x), & \hbox{if $x\in f_i([0,1))$;} \\
            0, & \hbox{if $x=0$.}
          \end{array}
        \right.
$$

Given a countable Markov map $T$ with inverse branches $f_i$, $i\in\N$, it is convenient to model our systems using symbolic dynamics.
Let $\Sigma := \N^\N$ and let $\sigma : \Sigma \to \Sigma$ be the usual left-shift transformation. We can relate this to our systems $\{f_i\},T$ via projections $\pi_T : \Sigma \to [0,1]$. We define
$$\pi_T(i_1,i_2,\dots) := \lim_{n \to \infty} f_{i_1}\circ f_{i_2} \circ \dots \circ f_{i_n}(0)$$
The factor map $\pi_T$ allow us to import the thermodynamical formalism from the shift space to measures invariant under $T$. For a shift invariant measure $\mu$, the push-forward measure $\pi_T \mu := \mu \circ \pi_T^{-1}$ will be $T$-invariant. Moreover if $\mu$ is ergodic for the shift map then $\pi_T \mu$ will be ergodic for $T$. Thus we can use  the symbolic model $(\Sigma,\sigma)$ and the geometric model $([0,1],T)$ interchangably.

Now if we have a sequence of countable Markov maps $T_k$ with inverse branches $\{f_{i,k}\}$ satisfying the assumptions of Theorem \ref{mainthm}, we will shorten the notation by letting $\pi_k := \pi_{T_k}$ and $\pi := \pi_T$. Then the topological conjugacy $\theta_k$ between $T_k$ and $T$ will satisfy
$$\theta_k(x)=\pi\circ\pi_k^{-1}(x), \quad x \in [0,1].$$
In other words, the conjugacy map between the systems $T$ and $T_k$ takes the point $x$ with coding given by $T$ and sends it to the point with the same coding, but now understood in terms of $T_k$.

\subsection{Dimension and H\"older/Lyapunov exponents}

Let $\Hd A$ be the Hausdorff dimension of a set $A \subset \R$ and the $s$-dimensional Hausdorff measures $\cH^s$ and the $\delta$-Hausdorff content $\cH^s_\delta$, see \cite{Fal} for a definition. For a Radon measure $\nu$ on $\R$, the Hausdorff dimension of $\nu$ is defined to be
$$\Hd \nu := \inf\{\Hd A : \nu(A) > 0\} = \essinf_{x \sim \nu} \llocd(\nu,x),$$
where $\llocd(\nu,x)$ is the \textit{lower} local dimension of $\nu$ at $x$, which is defined by
$$\llocd(\nu,x) := \liminf_{r \searrow 0} \frac{\log \nu(B(x,r))}{\log r}.$$

\begin{definition}[H\"older exponent] If $\theta : [0,1] \to [0,1]$ is a function, then the \textit{H\"older exponent} $\kappa(\theta)$ of $\theta$ is defined to be the infimal $\kappa \geq 0$ such that for some $C > 0$ the following inequality holds:
$$|\theta(x)-\theta(y)| \leq C|x-y|^\kappa, \quad x,y \in [0,1].$$
\end{definition}

Now we will consider a fixed measure $\mu$ on $[0,1]$ and countable Markov map $T$ and we will define the notions of Lyapunov exponents and entropy for this measure. Note that the Lyapunov exponent depends upon the mapping $T$ as well as the measure $\mu$.

\begin{definition}[Lyapunov exponent] The {\em Lyapunov exponent} of the measure $\mu$ is defined to be
$$\lambda(\mu,T) := \int \log |T' | \,\d \mu.$$
\end{definition}

Similarly, if $I_\i^T = \pi_T[\i]$, for $\i \in \N^*$, are the construction intervals generated by the countable Markov map $T$, the entropy of $\mu$ is defined as follows:

\begin{definition}[Entropy] The {\em Kolmogorov-Sinai entropy} (with respect to $T$) of the measure $\mu$ is defined to be
$$ h(\mu,T) := \lim_{n\to\infty} \frac{1}{n}\sum_{\i \in \N^n} -\mu(I_\i^T)\log \mu (I_\i^T).$$
\end{definition}

Note that sometimes we also write $h(\mu,T)$ or $\lambda(\mu,T)$ for a measure $\mu$ living on $\Sigma$ and then we just mean the values $h(\pi_T \mu,T)$ and $\lambda(\pi_T \mu,T)$ respectively for the projected measure $\pi_T \mu$. If we just take the entropy of such $\mu$ with respect to the shift map $\sigma$ on $\Sigma$, we define $h(\mu,\sigma)$ like $h(\mu,T)$ but we replace the intervals $I_\i^T$ by the cylinders $[\i]$.

Now, given a countable Markov map $T$, the Hausdorff dimensions of each of the $\pi_T$-projections of an ergodic shift-invariant measure can be computed using the following result:
\begin{proposition}[Mauldin-Urba\'{n}ski]\label{prop:measdim}
If $\mu$ is an ergodic $T$ invariant probability measure on $[0,1]$ and $h(\mu,T) < \infty$, then the Hausdorff dimension of $\mu$ is given by
$$\Hd \mu=\frac{h(\mu,T)}{\lambda(\mu,T)}.$$
\end{proposition}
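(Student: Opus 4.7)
The strategy is to prove the two inequalities $\Hd\mu \leq h(\mu,T)/\lambda(\mu,T)$ and $\Hd\mu \geq h(\mu,T)/\lambda(\mu,T)$ separately, by identifying the asymptotic length and measure of the cylinders $I_n^T(x)$ of $T$ containing a $\mu$-typical point $x$. Pulling $\mu$ back through $\pi_T$ to the symbolic model $(\Sigma,\sigma)$, the Shannon--McMillan--Breiman theorem (applicable because $h(\mu,T) < \infty$) yields
$$-\frac{1}{n}\log\mu(I_n^T(x)) \to h(\mu,T) \qquad \mu\text{-a.e.}$$
Simultaneously, Birkhoff's ergodic theorem applied to $\log|T'|$, together with the summable variations assumption, which supplies the bounded distortion estimate
$$K^{-1} \leq |I_\i^T| \cdot |(T^n)'(y)| \leq K \qquad (\i \in \N^n,\ y \in I_\i^T),$$
gives $-\frac{1}{n}\log|I_n^T(x)| \to \lambda(\mu,T)$ $\mu$-a.e.

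For the upper bound $\Hd\mu \leq h/\lambda$, fix $s > h/\lambda$ and choose $\epsilon > 0$ so small that $s(\lambda-\epsilon) > h+\epsilon$, and set
$$A_N := \bigl\{x : |I_n^T(x)| \leq e^{-n(\lambda-\epsilon)}\text{ and } \mu(I_n^T(x)) \geq e^{-n(h+\epsilon)} \text{ for all } n \geq N\bigr\}.$$
The two limits above imply $\mu(\bigcup_N A_N) = 1$. At level $n \geq N$, the cylinders $I_\i^T$ meeting $A_N$ are pairwise disjoint, cover $A_N$, and satisfy $|I_\i^T|^s \leq e^{n(h+\epsilon) - ns(\lambda-\epsilon)}\mu(I_\i^T)$, so
$$\sum_{\i : I_\i^T \cap A_N \neq \emptyset} |I_\i^T|^s \leq e^{n(h+\epsilon - s(\lambda-\epsilon))} \longrightarrow 0 \quad (n \to \infty),$$
giving $\cH^s(A_N) = 0$. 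Hence $\Hd \mu \leq s$, and letting $s \searrow h/\lambda$ yields the upper bound.

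For the matching lower bound, one shows $\llocd(\mu,x) \geq h/\lambda$ for $\mu$-a.e.\ $x$ and invokes the identity $\Hd \mu = \essinf_{x \sim \mu}\llocd(\mu,x)$ from the preliminaries. Given small $r > 0$, pick $n$ maximal with $|I_n^T(x)| \geq r$; then $I_{n+1}^T(x) \subset B(x,r)$, and the task is to upper-bound $\mu(B(x,r))$ by a constant multiple of $\mu(I_n^T(x))$ along infinitely many scales. Combined with $\mu(I_n^T(x)) \approx e^{-nh}$ and $r \geq |I_{n+1}^T(x)| \approx e^{-(n+1)\lambda}$, this comparison delivers $\llocd(\mu,x) \geq h/\lambda$.

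The main obstacle lies in this last comparison. For a finite Markov partition any ball $B(x,r)$ meets at most three $n$-cylinders and the bound is automatic, but for countable Markov maps $B(x,r)$ can in principle meet infinitely many $n$-cylinders near points where the partition accumulates (for instance near $0$ for the Gauss map). The standard remedy, going back to Mauldin--Urba\'nski, is to restrict to $\mu$-typical $x$ whose orbit returns infinitely often to the interior of a fixed first-level cylinder bounded away from the accumulation points; such returns exist by Poincar\'e recurrence and ergodicity. At these return times the neighbouring cylinder structure has uniformly bounded length ratios, and bounded distortion then yields $\mu(B(x,r)) \leq C\mu(I_n^T(x))$ along an infinite sequence of scales, enough to complete the proof.
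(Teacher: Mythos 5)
The paper does not actually prove this proposition: it is quoted with a reference to Theorem~4.4.2 of Mauldin--Urba\'nski \cite{MU}, so any blind proof is necessarily taking a different route. Yours is the standard ``volume lemma'' argument (Shannon--McMillan--Breiman for cylinder measures, Birkhoff plus bounded distortion for cylinder lengths, then a covering argument), which is a legitimate way to re-derive the result. The upper bound $\Hd\mu \le h(\mu,T)/\lambda(\mu,T)$ is complete and rigorous as written: the sets $A_N$ exhaust a full-measure set and the cylinder-cover estimate is correct.

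The lower bound sketch is where a genuine gap remains. You correctly identify the geometric obstacle --- a ball $B(x,r)$ may intersect infinitely many $n$-cylinders near an accumulation point of the partition --- and the remedy of restricting to return times of the orbit to a fixed set $J'$ compactly contained in the interior of a first-level cylinder. At such a return time $n$ one indeed has, by bounded distortion, $B(x,c|I_n^T(x)|)\subset I_n^T(x)$ for a constant $c>0$ depending only on $J'$ and the distortion constant, hence $\mu(B(x,r))\le\mu(I_n^T(x))$ whenever $r\le c|I_n^T(x)|$. But the concluding claim that this bound ``along an infinite sequence of scales'' is ``enough to complete the proof'' is not automatic: $\llocd(\mu,x)$ is a $\liminf_{r\searrow 0}$, and if the good scales $r_k$ had unbounded gaps in logarithmic scale (e.g.\ $r_k=e^{-2^k}$), monotonicity of $r\mapsto\mu(B(x,r))$ would not propagate the bound to intermediate $r$. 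What closes the gap is one more application of Birkhoff's theorem, this time to $\1_{J'}$: the return times $n_1<n_2<\cdots$ satisfy $n_k/k\to 1/\mu(J')$ for $\mu$-a.e.\ $x$, hence $n_{k+1}/n_k\to 1$, hence the good scales $r_k\asymp|I_{n_k}^T(x)|\approx e^{-n_k\lambda}$ satisfy $\log r_{k+1}/\log r_k\to 1$. Only then does squeezing $r\in[r_{k+1},r_k]$ between consecutive good scales yield $\llocd(\mu,x)\ge h/\lambda$. This extra density step should be made explicit; without it the lower bound is not established.
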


The above result can be found as Theorem 4.4.2 in the book \cite{MU} by Mauldin and Urba\'{n}ski.

\section{Thermodynamical formalism for the countable Markov shift}\label{sec:thermo}

In this section we present the tools we will need from thermodynamical formalism. We mostly concentrate on the countable Markov shift $\Sigma$ as this is where we will reformulate the problem, using the theory developed in a much more general setting in D. Mauldin and M. Urba\'nski \cite{MU} and the series of works by O. Sarig, see for example \cite{sarigETDS, sarigPAMS}.

First, recall that a potential $\phi$ is said to be \textit{locally H\"{o}lder} if there exist constants $C>0$ and $\delta \in (0,1)$ such that for all $n\in\N$ the \textit{variations} $\mathrm{var}_n$ decay exponentially:
$$\mathrm{var}_n(\phi):=\sup_{\i \in \N^n} \{|\phi(\j)-\phi(\k)| : \j,\k \in [\i]\} \leq C\delta^n.$$
Note that since nothing is assumed in the case that $n=0$, this does \textit{not} imply that $\phi$ is bounded.

The \textit{Birkhoff sum} $S_n \phi$ of a potential $\phi : \Sigma \to \R$ is the potential  defined by
$$S_n \phi(\i) := \sum_{k = 0}^{n-1} \phi(\sigma^k(\i)).$$
The \textit{pressure} of a locally H\"{o}lder potential $\phi$ is then the limit
$$P(\phi) := \lim_{n \to \infty} \frac{1}{n} \log \left(\sum_{\i \in \N^n} \exp(S_n\phi(\i^\infty))\right),$$
where $\i^\infty = \i\i\i\dots$ is the periodic word repeating the word $\i \in \N^n$. Define $\cM_\sigma$ to be the collection of all $\sigma$-invariant measures on $\Sigma$. A deep and useful result which we will now state is the \textit{variational principle}, which gives a representation of $P(\phi)$  using the Kolmogorov-Sinai entropy:

\begin{lemma}[Variational principle]\label{lma:varprinciple}
For any locally H\"{o}lder potential $\phi$  we have that
$$P(\phi) = \sup_{\mu \in \cM_\sigma} \left\{ h(\mu,\sigma) + \int \phi \, d\mu: \int \phi \, d\mu > -\infty \right\}.$$
\end{lemma}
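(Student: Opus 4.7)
The plan is to establish the two inequalities separately by exhausting the non-compact shift $\Sigma = \N^\N$ with its finite-alphabet subshifts $\Sigma_N := \{1,\dots,N\}^\N$. Write $P_N(\phi)$ for the pressure of the restriction $\phi|_{\Sigma_N}$, defined by the same formula but with the sum taken over $\{1,\dots,N\}^n$. Since the summands in the defining formula for $P(\phi)$ are non-negative, monotone convergence yields $P_N(\phi) \nearrow P(\phi)$ as $N \to \infty$. Each $(\Sigma_N,\sigma)$ is a compact full shift on a finite alphabet with continuous potential, so the classical variational principle of Walters supplies
\[
P_N(\phi) = \sup_{\nu \in \cM_\sigma(\Sigma_N)}\Bigl\{\, h(\nu,\sigma)+\int \phi\, d\nu \,\Bigr\},
\]
where $\cM_\sigma(\Sigma_N) \subset \cM_\sigma$ denotes the $\sigma$-invariant measures supported on $\Sigma_N$.

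For the inequality $h(\mu,\sigma)+\int\phi\, d\mu \leq P(\phi)$ with $\mu \in \cM_\sigma$ satisfying $\int \phi\, d\mu > -\infty$, I would fix $n$ and partition $\Sigma$ into $n$-cylinders $[\i]$, $\i \in \N^n$. The locally H\"older hypothesis gives the uniform oscillation bound $|S_n\phi(\j)-S_n\phi(\i^\infty)| \leq C' := C\sum_{k\geq 1}\delta^k$ for $\j \in [\i]$. Applying the Gibbs inequality $-\sum p_\i \log p_\i + \sum p_\i a_\i \leq \log \sum \exp(a_\i)$ with $p_\i = \mu[\i]$ and $a_\i = S_n\phi(\i^\infty)$, and using $\sigma$-invariance of $\mu$, I obtain
\[
-\sum_{\i \in \N^n} \mu[\i]\log\mu[\i] + n\!\int \phi\, d\mu \leq \log\!\sum_{\i \in \N^n} \exp(S_n\phi(\i^\infty)) + C'.
\]
Dividing by $n$, letting $n\to\infty$, and invoking the Kolmogorov-Sinai theorem (for which the countable generator $\{[i] : i \in \N\}$ must have finite $\mu$-entropy) yields the bound.

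For the reverse inequality, Walters' principle on $\Sigma_N$ produces $\mu_N \in \cM_\sigma(\Sigma_N) \subset \cM_\sigma$ with $h(\mu_N,\sigma)+\int\phi\, d\mu_N \geq P_N(\phi)-1/N$; the integrability $\int\phi\, d\mu_N > -\infty$ is automatic because $\phi|_{\Sigma_N}$ is continuous on the compact set $\Sigma_N$, and its infimum there is finite by local H\"older continuity combined with the value $\phi$ attains on the fixed point $(N,N,N,\dots)$. Sending $N \to \infty$ gives $\sup_{\mu}\{\,h(\mu,\sigma)+\int \phi\, d\mu\,\} \geq \lim_{N\to\infty} P_N(\phi) = P(\phi)$.

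The main obstacle is the upper inequality: the countable entropy partition $\{[i] : i \in \N\}$ need not have finite $\mu$-entropy a priori when $h(\mu,\sigma)$ is large, and $\phi$ may be unbounded below, so some care is required in justifying the passage to the limit in $n$ and the use of the Kolmogorov-Sinai formula. The summable variations condition provided by local H\"older regularity is exactly what controls the Gibbs-type comparison above and forces finite entropy precisely when the hypothesis $\int \phi\, d\mu > -\infty$ and $P(\phi) < \infty$ holds. The rigorous treatment of these compactness and integrability issues was carried out systematically in Sarig \cite{sarigETDS,sarigPAMS} and Mauldin-Urba\'nski \cite{MU}, whose framework the sketch above mirrors.
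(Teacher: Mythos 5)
The paper does not prove this lemma; it simply cites Theorem 2.1.8 of Mauldin--Urba\'nski \cite{MU}, and your sketch is a faithful outline of the standard argument found there (upper bound via the Gibbs/Jensen inequality on $n$-cylinders together with the summable-variations oscillation control, lower bound via the classical Walters principle on the compact subshifts $\Sigma_N$ and then exhaustion).

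One step is stated too quickly: the claim that ``monotone convergence yields $P_N(\phi)\nearrow P(\phi)$'' only gives, for each fixed $n$, the monotone convergence of the partition functions $Z_n^N(\phi)\nearrow Z_n(\phi)$, hence $P_N(\phi)\leq P_{N+1}(\phi)\leq P(\phi)$; the equality of the limit with $P(\phi)$ requires interchanging the limits in $n$ and $N$, which is precisely the content of the finite approximation property. The paper treats this as a separate nontrivial theorem (its Theorem \ref{finiteapprox}, i.e.\ Theorem 2.1.5 in \cite{MU}), so your lower-bound argument implicitly relies on a result of comparable depth to the one being proved rather than on a one-line monotone-convergence observation. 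With that caveat made explicit, and with the upper bound handled as you indicate --- noting that by the paper's own definition $h(\mu,\sigma)=\lim_n\frac1n H_n(\mu)$, so no separate appeal to a Kolmogorov--Sinai generator theorem is actually required, and that the Gibbs inequality also forces $P(\phi)=\infty$ whenever $h(\mu,\sigma)=\infty$ and $\int\phi\,d\mu>-\infty$ --- the argument is correct and matches the cited source.
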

For a proof, see Theorem 2.1.8 in \cite{MU}. If there exists a measure $\mu \in \cM_\sigma$ which attains the supremum in Lemma \ref{lma:varprinciple}, then we call $\mu$ an \textit{equilibrium state} for a potential $\phi$. In the case of finite pressure more can be said about equilibrium states.
\begin{definition}[Gibbs measures]
Let $\phi:\Sigma\to\R$ be a locally H\"{o}lder potential. If $P(\phi)$ is finite, then we call $\mu_{\phi}$ a \textit{Gibbs measure} for $\phi$ if there exists a constant $C > 0$ such that
$$C^{-1}\exp(S_n \phi(\j) - nP(\phi)) \leq \mu_\phi[\i] \leq C\exp(S_n \phi(\j) - nP(\phi))$$
for any $\i \in \N^n$, $\j \in [\i]$ and $n \in \N$.
\end{definition}
An example of such a measure is the \textit{Bernoulli measure} $\mu$ associated to weights $p_i \in [0,1]$, $i \in \N$, with $\sum_{i = 1}^\infty p_i = 1$, which is the equilibrium state for the potential $\phi(\i) = -\log p_{i_1}$. Then $P(\phi) = 0$ and
$$\mu[\i] = p_{i_1}\dots p_{i_n} = \exp(S_n \phi(\j)), \quad \text{for }\j \in [\i].$$
The following proposition relates Gibbs measures to equilibrium states.
\begin{proposition}
\label{prop:uniqequilibrium}
Let $\phi:\Sigma\to\R$ be a locally H\"{o}lder potential. If $P(\phi)<\infty$ then there exists a unique invariant probability measure, $\mu_{\phi}$ which is a Gibbs measure for $\phi$. Moreover, if $\phi$ is integrable with respect to $\mu_{\phi}$ then $\mu_{\phi}$ is the unique equilibrium state for $\phi$.
\end{proposition}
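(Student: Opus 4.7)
The plan is to prove this by importing the Ruelle--Perron--Frobenius spectral theory for countable Markov shifts developed in \cite{MU} and \cite{sarigETDS,sarigPAMS}. Introduce the transfer (Ruelle) operator
\[
L_\phi f(\i) = \sum_{j \in \N} e^{\phi(j\i)} f(j\i),
\]
where $j\i$ denotes prepending the symbol $j$ to $\i$. Local H\"olderness of $\phi$ together with $P(\phi) < \infty$ guarantee that $L_\phi$ acts boundedly on a suitable Banach space of locally H\"older observables and that $\frac{1}{n}\log L_\phi^n \1(\i) \to P(\phi)$ uniformly on cylinders.

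First, I would invoke Sarig's generalized Ruelle--Perron--Frobenius theorem to produce a strictly positive, locally H\"older eigenfunction $h$ for $L_\phi$ and an eigenmeasure $\nu$ for the dual $L_\phi^*$, both with eigenvalue $e^{P(\phi)}$ and normalized so that $\int h \d \nu = 1$. The candidate is $\mu_\phi := h \nu$, and its $\sigma$-invariance is immediate from the eigenvalue equations. The two-sided Gibbs bounds
\[
C^{-1} \leq \frac{\mu_\phi[\i]}{\exp(S_n \phi(\j) - n P(\phi))} \leq C, \quad \j \in [\i],
\]
then follow by iterating the eigenrelation $n$ times and combining the control on Birkhoff sums inside a cylinder coming from summable variations with the uniform positivity and boundedness of $h$ on each symbol.

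For uniqueness of the Gibbs measure, any other invariant Gibbs measure $\mu'$ is mutually absolutely continuous with $\mu_\phi$ with uniformly bounded density; ergodicity of $\mu_\phi$, which follows from the spectral gap of $L_\phi$ on H\"older observables (exponential decay of correlations), then forces $\mu' = \mu_\phi$. For the equilibrium state assertion, I would combine the variational principle (Lemma \ref{lma:varprinciple}) with the Gibbs bounds to compute directly that $h(\mu_\phi,\sigma) + \int \phi \d \mu_\phi = P(\phi)$, the integrability of $\phi$ against $\mu_\phi$ being exactly what makes $\mu_\phi$ an admissible competitor in the variational functional. Uniqueness among equilibrium states follows by a standard convexity argument applied to the Radon--Nikodym derivative of any other maximizer against $\mu_\phi$.

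The principal obstacle is the initial construction of $(h,\nu)$ in the non-compact setting: the classical Perron--Frobenius arguments fail because the simplex of probability vectors on $\N$ is not weak-$*$ compact. This is resolved by Sarig's notion of positive recurrence for the normalized potential $\phi - P(\phi)$, which holds automatically for locally H\"older $\phi$ with $P(\phi) < \infty$ and substitutes for compactness via a renewal-type analysis of loops based at a single symbol. Once this spectral input is available, the remaining steps follow the Bowen--Ruelle paradigm for finite alphabets, with extra care to ensure that tail contributions to the sums over $\N^n$ stay controlled via the locally H\"older hypothesis.
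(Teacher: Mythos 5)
The paper does not actually prove this proposition: it is stated as a citation to Proposition 2.1.9, Theorem 2.2.9 and Corollary 2.7.5 of \cite{MU}, so there is no in-paper argument for you to have matched or diverged from. Your sketch is a faithful outline of the Ruelle--Perron--Frobenius machinery that those references (together with \cite{sarigETDS,sarigPAMS}) develop, and the main steps --- building $L_\phi$, invoking positive recurrence (automatic on the full shift, which has the BIP property, once $\phi$ is locally H\"older with finite pressure), forming $\mu_\phi = h\,\nu$, iterating the eigenrelation to get the Gibbs bounds, and then deducing uniqueness and the equilibrium property --- are the correct ones and in the right order. Two small points are worth tightening if you were to write this out in full. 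First, you assert boundedness of $L_\phi$ directly from $P(\phi)<\infty$; on the full shift this is fine because finite pressure is equivalent to $\sum_j e^{\sup_{[j]}\phi}<\infty$, but that equivalence uses both local H\"olderness and the full-shift structure and should be stated. Second, for uniqueness of the equilibrium state, the route actually taken in \cite{MU} and \cite{sarigPAMS} (and echoed by the paper's own proof of Lemma~\ref{lem:noequilibrium}) is to show any equilibrium state must be a fixed point of $L_\phi^*$, hence a Gibbs measure, and then appeal to uniqueness of Gibbs measures; your ``convexity applied to the Radon--Nikodym derivative'' phrasing gestures at the same conclusion but is vaguer than the argument in the source, so you should either spell out that convexity/differentiability-of-pressure step or just adopt the fixed-point route. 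Neither issue is a gap in the logic, only in the level of detail.
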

For a proof  of this result, see Proposition 2.1.9, Theorem 2.2.9 and Corollary 2.7.5 in \cite{MU}. The case when $\phi$ is not integrable with respect to $\mu_\phi$ is the subject of the next lemma.
\begin{lemma}\label{lem:noequilibrium}
Let $\phi:\Sigma\to\R$ be a locally H\"{o}lder potential with $P(\phi) < \infty$. If $\phi$ is not $\mu_{\phi}$ integrable, then there exist no equilibrium states for $\phi$.
\end{lemma}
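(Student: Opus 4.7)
The plan is to argue by contradiction. Suppose $\nu\in\cM_\sigma$ is an equilibrium state for $\phi$, so by the formulation of the variational principle (Lemma \ref{lma:varprinciple}) it satisfies $\int \phi \, d\nu > -\infty$ and $h(\nu,\sigma) + \int \phi \, d\nu = P(\phi)$. Since $P(\phi)<\infty$, Proposition \ref{prop:uniqequilibrium} produces the unique $\sigma$-invariant Gibbs measure $\mu_\phi$. The strategy is: first show $\nu=\mu_\phi$ by a Gibbs--Jensen argument, then exhibit the contradiction using the assumed non-integrability of $\phi$ against $\mu_\phi$.

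For the identification step, the idea is to exploit the non-negativity of the relative entropy of $\nu$ with respect to $\mu_\phi$ on the $n$-th cylinder partition, which gives
$$-\sum_{\i \in \N^n} \nu[\i] \log \nu[\i] \;\leq\; -\sum_{\i \in \N^n} \nu[\i] \log \mu_\phi[\i].$$
Inserting the Gibbs lower bound $\log \mu_\phi[\i] \geq S_n \phi(\j_\i) - nP(\phi) - \log C$ for some $\j_\i \in [\i]$, and using that the summable variations of $\phi$ yield $|\sum_{\i}\nu[\i] S_n\phi(\j_\i) - n\int \phi \, d\nu| \leq \sum_{m=1}^\infty \var_m(\phi) < \infty$ uniformly in $n$, I would divide by $n$ and pass to the limit to recover
$$h(\nu,\sigma) + \int \phi \, d\nu \;\leq\; P(\phi) - \limsup_{n\to\infty}\frac{1}{n} H_n(\nu\,\|\,\mu_\phi).$$
The equilibrium-state equality then forces $H_n(\nu\,\|\,\mu_\phi)/n \to 0$, and combined with the uniqueness of the shift-invariant Gibbs measure from Proposition \ref{prop:uniqequilibrium} (see also Theorem 2.2.9 and Corollary 2.7.5 in \cite{MU}) this identifies $\nu$ with $\mu_\phi$.

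To close the argument, the plan is to observe that $P(\phi)<\infty$ together with the Gibbs upper bound $\mu_\phi[i] \leq C \exp(\sup_{[i]}\phi - P(\phi))$ and $\sum_{i} \exp(\sup_{[i]}\phi) < \infty$ forces $\int \phi^+ \, d\mu_\phi < \infty$, so the hypothesis $\phi \notin L^1(\mu_\phi)$ must be witnessed by $\int \phi^- \, d\mu_\phi = \infty$, i.e.\ $\int \phi \, d\mu_\phi = -\infty$. Combined with $\nu = \mu_\phi$, this contradicts the requirement $\int \phi \, d\nu > -\infty$ built into the definition of an equilibrium state.

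The principal technical obstacle is the first step: the careful tracking of the Jensen slack and its limit in the countable-alphabet setting, together with the verification that vanishing relative-entropy production indeed forces $\nu = \mu_\phi$. This is precisely the content of the uniqueness theory developed in Chapter 2 of Mauldin--Urba\'nski \cite{MU}, which we would invoke directly rather than reprove; the remainder of the argument is then a short bookkeeping exercise using the hypothesis that $\phi$ is not $\mu_\phi$-integrable.
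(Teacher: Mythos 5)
Your overall contradiction strategy is sound, and the final step (deducing $\int\phi\,d\mu_\phi=-\infty$ from $P(\phi)<\infty$, since $\sum_i e^{\sup_{[i]}\phi}<\infty$ forces $\sup_{[i]}\phi\to-\infty$ so only finitely many branches contribute to $\phi^+$) is correct. The gap is in the identification step, and it is not a small one.

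The relative-entropy computation you sketch gives $\tfrac{1}{n}H_n(\nu\,\|\,\mu_\phi)\to P(\phi)-h(\nu,\sigma)-\int\phi\,d\nu=0$, but vanishing relative entropy per symbol does \emph{not} by itself imply $\nu=\mu_\phi$: it is exactly equivalent to saying that $\nu$ attains the variational supremum, i.e.\ that $\nu$ is an equilibrium state, which is circular. To pass from ``$\nu$ is an equilibrium state'' to ``$\nu=\mu_\phi$'' you would need either (a) uniqueness of equilibrium states, which is Theorem 2.2.9 of \cite{MU} and is proved there precisely \emph{under} the hypothesis that $\phi$ is $\mu_\phi$-integrable, the very hypothesis we are assuming fails; or (b) some structural result showing that every equilibrium state is a Gibbs measure, so that you can then invoke the uniqueness of the Gibbs measure from Proposition \ref{prop:uniqequilibrium}(first part). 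Your appeal to ``uniqueness of the shift-invariant Gibbs measure'' does not close this gap, because nothing in the Jensen argument shows that $\nu$ is Gibbs.

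This is exactly what the paper's proof supplies: by Sarig \cite[Theorem 7]{sarigETDS}, any equilibrium state is a fixed point of the dual Ruelle operator, and by the proof of \cite[Theorem 1]{sarigPAMS}, under the Big Image Property (in particular on the full shift) such a fixed point is a Gibbs measure. Only then does the uniqueness of the Gibbs measure identify the hypothetical equilibrium state with $\mu_\phi$, and the contradiction with $\int\phi\,d\mu_\phi=-\infty$ follows. To repair your proposal you would need to import this step from Sarig (or give an independent proof that equilibrium states must be Gibbs in the non-integrable regime), which is where the genuine mathematical content of the lemma lies.
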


\begin{proof}It is a result of Sarig \cite[Theorem 7]{sarigETDS} that the only possible equilibrium state is a fixed point for the Ruelle operator (see \cite{sarigETDS} for a definition). It is then shown in the proof of \cite[Theorem 1]{sarigPAMS} that in the situation where the system satisfies the  Big Image Property (see Sarig's paper for the definition; note that it includes the full shift) such measures are Gibbs measures. Thus there cannot exist equilibrium states for $\phi$.
\end{proof}

All the above thermodynamic definitions can be formulated also for the finite alphabet $\{1,2,\dots,N\}$, $N\in \N$ and it makes things considerably simpler. For instance, in the finite alphabet case it is known that unique equilibrium states always exist for H\"{o}lder potentials and they are Gibbs measures. This makes it convenient to restrict to the finite case and consider approximations for the pressure. Given a locally H\"{o}lder potential $\phi : \Sigma \to \R$, we write $P_N(\phi)$ to denote the pressure of $\phi$ restricted to the finite shift $\Sigma_N := \{1,2,\dots,N\}^\N$. Then we have the following approximation result, which can be found as Theorem 2.1.5 in \cite{MU}.
\begin{theorem}[Finite approximation property]\label{finiteapprox}
For any locally H\"{o}lder potential $\phi$,
$$P(\phi) = \lim_{N \to \infty} P_N(\phi).$$
\end{theorem}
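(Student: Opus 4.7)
My plan is to prove the two inequalities $\lim_{N\to\infty} P_N(\phi)\le P(\phi)$ and $\lim_{N\to\infty} P_N(\phi)\ge P(\phi)$ separately, working directly with the partition functions
$$Z_n^N(\phi) := \sum_{\i \in \{1,\dots,N\}^n} \exp(S_n\phi(\i^\infty)) \quad\text{and}\quad Z_n(\phi) := \sum_{\i \in \N^n} \exp(S_n\phi(\i^\infty)),$$
so that $P_N(\phi)$ and $P(\phi)$ are the exponential growth rates of these sums. The first inequality is essentially immediate: since $\{1,\dots,N\}^n \subseteq \{1,\dots,N+1\}^n \subseteq \N^n$ I have $Z_n^N(\phi) \le Z_n^{N+1}(\phi) \le Z_n(\phi)$ for every $n$, hence $P_N(\phi) \le P_{N+1}(\phi) \le P(\phi)$, and the monotone limit $L := \lim_{N\to\infty} P_N(\phi)$ exists in $[-\infty,P(\phi)]$.

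For the reverse inequality, the key tool is the bounded-distortion estimate coming from local H\"older regularity. From $\var_k(\phi)\le C\delta^k$ one gets, by the usual comparison of a Birkhoff sum along $(\i\j)^\infty$ with the sums along $\i^\infty$ and $\j^\infty$, that for any $\i\in\{1,\dots,N\}^n$ and $\j\in\{1,\dots,N\}^m$,
$$\bigl| S_{n+m}\phi((\i\j)^\infty) - S_n\phi(\i^\infty) - S_m\phi(\j^\infty)\bigr| \le 2\sum_{\ell=1}^\infty \var_\ell(\phi) =: D < \infty.$$
Summing over all pairs $(\i,\j)$ yields the approximate super-multiplicativity
$$Z_{n+m}^N(\phi) \ge e^{-D}\, Z_n^N(\phi)\, Z_m^N(\phi),$$
and iterating gives $\log Z_{kn}^N(\phi) \ge k\log Z_n^N(\phi) - (k-1)D$. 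Dividing by $kn$ and sending $k\to\infty$ produces the uniform lower bound
$$P_N(\phi)\;\ge\;\tfrac{1}{n}\log Z_n^N(\phi) - \tfrac{D}{n} \qquad\text{for every }n,\ N.$$

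To close, fix $\varepsilon>0$. Assume first $P(\phi)<\infty$, choose $n$ so large that $\tfrac{1}{n}\log Z_n(\phi) \ge P(\phi)-\varepsilon$ and $D/n<\varepsilon$, and then use monotone convergence $Z_n^N(\phi)\uparrow Z_n(\phi)<\infty$ to pick $N_0$ with $Z_n^{N_0}(\phi)\ge \tfrac12 Z_n(\phi)$; by monotonicity in $N$, for every $N\ge N_0$ this gives $P_N(\phi)\ge P(\phi)-3\varepsilon - (\log 2)/n$, so $L\ge P(\phi)$. When $P(\phi)=\infty$ the same scheme works in two sub-cases: either some $Z_{n_0}(\phi)$ is finite but $\tfrac{1}{n}\log Z_n(\phi)$ is unbounded (apply the approximation argument along a subsequence of $n$'s), or $Z_n(\phi)=\infty$ for all $n$ (in which case $Z_n^N(\phi)\to\infty$ as $N\to\infty$ makes $\tfrac{1}{n}\log Z_n^N(\phi)$ arbitrarily large before dividing by~$n$). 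The main technical point I expect to need care with is precisely this exchange of the two limits --- the outer limit in $n$ defining $P$ against the limit in $N$ --- and it is resolved cleanly by the uniform error $D/n$ in the super-multiplicativity bound, which lets me freeze $n$ first and only then push $N$ to infinity.
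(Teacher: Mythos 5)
The paper does not actually prove this statement; it simply cites Theorem 2.1.5 of Mauldin--Urba\'nski \cite{MU}. Your argument is a correct, self-contained proof along the standard route used there: monotonicity gives $P_N \le P_{N+1} \le P(\phi)$, and the bounded-distortion constant $D = 2\sum_\ell \var_\ell(\phi)$ coming from summable variations gives the approximate super-multiplicativity $Z_{n+m}^N \ge e^{-D} Z_n^N Z_m^N$, hence the uniform-in-$N$ bound $P_N(\phi) \ge \tfrac{1}{n}\log Z_n^N(\phi) - D/n$ that lets you freeze $n$ and push $N \to \infty$. This is exactly the right decoupling of the two limits.

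Two small points worth tightening. First, in the finite-pressure case you implicitly invoke $Z_n(\phi) < \infty$ when writing $Z_n^N(\phi) \uparrow Z_n(\phi) < \infty$; this does hold, but it deserves a line: the same distortion estimate also gives \emph{sub}-multiplicativity $Z_{n+m} \le e^{D} Z_n Z_m$ on the full alphabet, so $Z_1(\phi) = \infty$ would force $Z_n(\phi) = \infty$ for all $n$ and $P(\phi) = \infty$; conversely $P(\phi) < \infty$ gives $Z_1(\phi) < \infty$ and hence $Z_n(\phi) \le e^{D(n-1)} Z_1(\phi)^n < \infty$. Second, and as a consequence of that same observation, your first sub-case of the $P(\phi)=\infty$ analysis (some $Z_{n_0}(\phi) < \infty$ but $\tfrac{1}{n}\log Z_n$ unbounded) is vacuous: if any $Z_{n_0}(\phi)$ is finite then $Z_1(\phi)$ is finite and $P(\phi) \le \log Z_1(\phi) + D < \infty$. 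So when $P(\phi)=\infty$ you are automatically in the second sub-case $Z_n(\phi) = \infty$ for all $n$, where your monotone-convergence argument applies cleanly. Neither point affects the correctness of the conclusion.
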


This theorem will allow us to use  results which hold on the full shift with a finite alphabet (or, more generally, on topologically mixing subshifts of finite type). These results can sometimes be extended to the infinite case, but due to the hypotheses needed it is more convenient to use Theorem \ref{finiteapprox} and the results in the finite alphabet case.
The first of these results that we will need is the following lemma on the derivative of pressure, which is Proposition 4.10 in \cite{PP}.

\begin{lemma}[Derivative of pressure]\label{lem:thermoderivative}
Let $\phi,\psi:\Sigma_N\to\R$ be H\"{o}lder continuous functions and define the analytic function
$$Z_N(q) := P(q\psi+\phi).$$
Let $\mu_{q}$ be the Gibbs measure on $\Sigma_N$ for the potential $q\psi+\phi$. Then the derivative of $Z_N$ is given by
$$Z_N'(q) = \int \psi \d\mu_q.$$
\end{lemma}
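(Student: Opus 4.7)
\emph{Proof plan.} My plan is to combine the variational principle with spectral analyticity of the pressure, and to read off the derivative via a convex-envelope argument.

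First I would invoke the unique-equilibrium-state theory in the finite-alphabet setting: since $\Sigma_N$ is a finite-alphabet shift and $q\psi+\phi$ is H\"older continuous (so automatically bounded and $\mu_q$-integrable), the finite-state version of Proposition \ref{prop:uniqequilibrium} gives that $\mu_q$ is the unique equilibrium state for $q\psi+\phi$, whence
$$Z_N(q) = h(\mu_q,\sigma) + q\int \psi \d\mu_q + \int \phi \d\mu_q.$$
Then, for any fixed $q_0 \in \R$ and any $q \in \R$, applying the variational principle to the potential $q\psi+\phi$ with $\mu_{q_0}$ as a test measure yields the affine support inequality
$$Z_N(q) \ge h(\mu_{q_0},\sigma) + q\int \psi \d\mu_{q_0} + \int \phi \d\mu_{q_0} = Z_N(q_0) + (q-q_0)\int \psi \d\mu_{q_0}.$$
This both reconfirms that $Z_N$ is convex and exhibits $\int \psi \d\mu_{q_0}$ as a subgradient at $q_0$. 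If $Z_N$ is differentiable at $q_0$, then this subgradient must equal $Z_N'(q_0)$, which is the formula we want.

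The remaining task, and the main obstacle, is precisely the differentiability (indeed analyticity) of $Z_N$. I would address it via Ruelle transfer operator theory: on the finite-alphabet subshift, the operator $\cL_{q\psi+\phi}$ acting on a suitable Banach space of H\"older functions is quasi-compact, with a simple leading eigenvalue $\lambda(q) = \exp(Z_N(q))$ separated from the rest of the spectrum by a gap that is uniform for $q$ in a neighbourhood of $q_0$. Kato's analytic perturbation theory then implies that $\lambda(q)$, and hence $Z_N(q) = \log \lambda(q)$, depends real-analytically on $q$, so $Z_N$ is smooth and the subgradient is the genuine derivative.

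The hard part is entirely this last differentiability step: the variational principle alone delivers only convexity and a one-sided bound, and the substantive content of the lemma is the spectral gap and analytic perturbation for $\cL_{q\psi+\phi}$. Fortunately, on a finite subshift of finite type with H\"older potential these are classical, which is why the authors cite Parry--Pollicott directly rather than reproducing the argument.
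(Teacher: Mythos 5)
Your argument is correct. The paper itself offers no proof of this lemma, merely citing Proposition 4.10 of Parry--Pollicott; your reconstruction---using the variational principle to exhibit $\int \psi \d\mu_{q_0}$ as an affine minorant of the convex function $Z_N$ at $q_0$, hence a subgradient, and then invoking spectral gap plus Kato analytic perturbation of the Ruelle operator $\cL_{q\psi+\phi}$ to obtain analyticity of $Z_N$ and so upgrade the subgradient to the genuine derivative---is precisely the standard route behind that citation, and every ingredient you use (uniqueness of the Gibbs equilibrium state on a finite-alphabet shift with H\"older potential, convexity of pressure, simple isolated leading eigenvalue) is classical in that setting. One small remark: you could shorten the last step slightly by noting that for a convex function on $\R$ differentiability at a point already forces the subdifferential there to be a singleton, so analyticity is more than you need---mere differentiability of $q\mapsto\lambda(q)$ at $q_0$ suffices---but since analytic perturbation theory delivers analyticity for free, nothing is lost.
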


Gibbs measures satisfy many statistical theorems similar to ones in probability theory. We will use one of these,  namely, the \textit{law of the iterated logarithm}. Before stating this theorem, we recall that a function $\psi : \Sigma_N \to \R$ is said to be \textit{cohomologous to a constant} if there exists a constant $c \geq 0$ and a continuous function $u:\Sigma_N\to\R$ such that
$$\psi - c = u - u \circ \sigma.$$
Moreover, $\psi$ is called a \textit{coboundary} if the constant $c$ is equal to $ 0$.
\begin{lemma}[Law of the iterated logarithm]\label{lem:lawofiterated}
Let $\phi,\psi:\Sigma_N\to\R$ be H\"{o}lder potentials where $\psi$ is not cohomologous to a constant. Then there exists $c(\psi) > 0$ such that for $\mu_\phi$-almost every $x$, we have
$$\limsup_{n \to \infty} \frac{S_n\psi(x)-n\int \psi\d\mu_\phi}{\sqrt{n \log\log n}} = c(\psi).$$
\end{lemma}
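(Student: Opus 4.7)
The plan is to deduce the law of the iterated logarithm from the spectral theory of the Ruelle transfer operator on the finite-alphabet shift $(\Sigma_N,\sigma)$ together with a Gordin-type martingale approximation. For a H\"older potential $\phi$ on $\Sigma_N$ the normalized transfer operator $\cL_\phi$ has a spectral gap on the space of H\"older functions, yielding exponential decay of correlations for the Gibbs measure $\mu_\phi$; this is the standard starting point for statistical limit theorems for Gibbs states (see, e.g., \cite{PP} or Chapter 2 of \cite{MU}).

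First, I would center the observable by setting $\bar\psi:=\psi-\int\psi\,d\mu_\phi$ and define $u:=\sum_{n\geq 0}\cL_\phi^n\bar\psi$, which converges in the H\"older norm thanks to the spectral gap. A direct computation then shows that $v:=\bar\psi-(u-u\circ\sigma)$ lies in the kernel of $\cL_\phi$; via the duality $\int f\cdot(g\circ\sigma)\,d\mu_\phi=\int(\cL_\phi f)\cdot g\,d\mu_\phi$ this is equivalent to saying that the sequence $\{v\circ\sigma^k\}_{k\geq 0}$ is a reverse martingale difference sequence with respect to the decreasing filtration $\cF_n:=\sigma^{-n}\cB$. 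Writing $M_n:=\sum_{k=0}^{n-1}v\circ\sigma^k$, one obtains $S_n\bar\psi=M_n+(u-u\circ\sigma^n)$, so the difference between the Birkhoff sums and the martingale is uniformly bounded. By the Green--Kubo formula combined with exponential decay of correlations, the asymptotic variance
$$\sigma^2(\psi):=\int v^2\,d\mu_\phi = \int \bar\psi^2\,d\mu_\phi+2\sum_{n\geq 1}\int\bar\psi\cdot(\bar\psi\circ\sigma^n)\,d\mu_\phi$$
exists and is finite.

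The decisive step, and the place where the hypothesis on $\psi$ enters, is to verify that $\sigma^2(\psi)>0$. If $\sigma^2(\psi)=0$ then $v\equiv 0$ $\mu_\phi$-a.e., so $\bar\psi=u-u\circ\sigma$ with a H\"older function $u$, contradicting the assumption that $\psi$ is not cohomologous to a constant. With $\sigma^2(\psi)>0$ secured, the classical martingale law of the iterated logarithm (Heyde--Scott) applied to $\{M_n\}$ yields
$$\limsup_{n\to\infty}\frac{M_n}{\sqrt{2n\log\log n}}=\sigma(\psi)\quad\mu_\phi\text{-a.s.},$$
and the boundedness of the coboundary term $u-u\circ\sigma^n$ transfers the same limit to $S_n\bar\psi$, giving the claim with $c(\psi):=\sqrt{2}\,\sigma(\psi)$.

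The main technical obstacle is the quasi-compactness of $\cL_\phi$ on the H\"older space (which underlies both the existence of the coboundary $u$ and the summability of the correlation series). Once that is in place, the identification of the cohomology class for which the variance degenerates -- a Liv\v{s}ic-type rigidity statement -- is precisely what makes the non-cohomology hypothesis necessary and sufficient for $c(\psi)>0$; everything after that is routine martingale analysis.
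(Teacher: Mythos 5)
Your proposal reconstructs the full argument via a Gordin-type martingale approximation and the Heyde--Scott martingale LIL, whereas the paper's proof is simply a citation: it quotes Corollary 2 of Denker--Philipp \cite{DP} and, for the positivity of $c(\psi)$, appeals to Proposition 4.12 of Parry--Pollicott \cite{PP}. Both routes are sound, and yours is essentially the route by which the cited theorem is itself established, so the difference is one of exposition (self-contained proof versus black-box citation) rather than of mathematics. You also get the cleaner normalization $c(\psi)=\sqrt{2}\,\sigma(\psi)$; the paper's parenthetical remark equates $c(\psi)$ with the asymptotic variance itself, which is a harmless slip since only $c(\psi)>0$ is used downstream.

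One bookkeeping error in your construction should be corrected. With $u:=\sum_{n\ge 0}\cL_\phi^n\bar{\psi}$ one has $\cL_\phi u=u-\bar{\psi}$, and therefore
$$
\cL_\phi\bigl(\bar{\psi}-(u-u\circ\sigma)\bigr)=\cL_\phi\bar{\psi}-\cL_\phi u+\cL_\phi(u\circ\sigma)=\cL_\phi\bar{\psi}-(u-\bar{\psi})+u=\cL_\phi\bar{\psi}+\bar{\psi},
$$
which is not zero, so this $v$ is not in $\ker\cL_\phi$ and the reverse martingale property fails. The standard fix is to start the series at $n=1$: take $u:=\sum_{n\ge 1}\cL_\phi^n\bar{\psi}$ and set $v:=\bar{\psi}+u-u\circ\sigma$. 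Then $\cL_\phi u=u-\cL_\phi\bar{\psi}$, hence $\cL_\phi v=\cL_\phi\bar{\psi}+(u-\cL_\phi\bar{\psi})-u=0$, the telescoping identity becomes $S_n\bar{\psi}=M_n+u\circ\sigma^n-u$ with a uniformly bounded remainder, and the rest of your argument (Green--Kubo, the Liv\v{s}ic-type rigidity giving $\sigma^2(\psi)>0$ from the non-cohomology hypothesis, and the martingale LIL) goes through unchanged.
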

\begin{proof}
This is Corollary 2 in \cite{DP}. Note that
$$c(\psi)= \lim_{n \to \infty}\frac{1}{n}\int (S_n\psi-\int \psi\d\mu_{\phi})^2\d\mu_{\phi}$$
and it is shown in Proposition 4.12 of \cite{PP} that $c(\psi)\geq 0$, with equality if and only if $\psi$ is cohomologous to a constant. The number $c(\psi)$ is the variance of $\psi$ with respect to $\mu_{\phi}$ and is also the second derivative of the pressure function $q\to P(q\phi+\psi)$ at $q=0$.
\end{proof}

Finally in this section we need the following result in the countable case regarding the behaviour of equilibrium states.
\begin{lemma}\label{cusp}
Let $\phi:\Sigma\to(-\infty,0]$ be locally H\"{o}lder such that $P(\phi)=0$, and let
$$s=\inf\{t:P(t\phi)=\infty\}<\infty.$$
We have that
\begin{enumerate}
\item[\emph{(1)}]
there exists a sequence $\mu_n$ of compactly supported $\sigma$-invariant ergodic measures such that
$$\lim_{n\to\infty}h(\mu_n,\sigma)=\infty \quad \text{and} \quad \limsup_{n\to\infty}\frac{h(\mu_n,\sigma)}{\int \phi\d\mu_n}\geq s,$$
\item[\emph{(2)}]
for any $t>s$ there exists $K(t) > 0$ such that if $\mu$ is ergodic, $\phi$ is integrable with respect to $\mu$ and $h(\mu,\sigma)>K(t)$, then
$$h(\mu,\sigma)+t\int\phi\d\mu< 0.$$
\end{enumerate}
\end{lemma}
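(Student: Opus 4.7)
The plan is to handle (2) first, as it follows from a short variational-principle computation once the pressure is known to be finite strictly above $s$. For $t>s$, pick $t_1\in(s,t)$ so that $A:=P(t_1\phi)$ is finite. By the variational principle (Lemma~\ref{lma:varprinciple}), every ergodic $\mu$ with $\phi\in L^1(\mu)$ satisfies
$$h(\mu,\sigma)+t_1\int\phi\,d\mu\leq A,$$
which rearranges to $\int\phi\,d\mu\leq(A-h(\mu,\sigma))/t_1$. Substituting this bound into $h(\mu,\sigma)+t\int\phi\,d\mu$ and collecting terms yields
$$h(\mu,\sigma)+t\int\phi\,d\mu\;\leq\;h(\mu,\sigma)\bigl(1-t/t_1\bigr)+tA/t_1;$$
since $t>t_1>0$ the coefficient $1-t/t_1$ is strictly negative, so the right-hand side is $<0$ as soon as $h(\mu,\sigma)>K(t):=\max\{0,\,tA/(t-t_1)\}$. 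This proves (2).

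For (1), the plan is a diagonal construction based on Theorem~\ref{finiteapprox}. Assume first $s>0$ (the main case of interest). For each $t\in(0,s)$ the definition of $s$ forces $P(t\phi)=\infty$, and Theorem~\ref{finiteapprox} then provides $P_N(t\phi)\to\infty$ as $N\to\infty$, where $P_N$ is the pressure on the finite-alphabet subshift $\Sigma_N=\{1,\dots,N\}^{\N}$. On $\Sigma_N$ the potential $t\phi$ is H\"older with finite pressure, so Proposition~\ref{prop:uniqequilibrium} supplies a unique ergodic Gibbs equilibrium measure $\mu_{N,t}$, which is $\sigma$-invariant and compactly supported in $\Sigma$. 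The equilibrium identity
$$h(\mu_{N,t},\sigma)+t\int\phi\,d\mu_{N,t}=P_N(t\phi),$$
together with $\phi\leq 0$ and $t>0$, yields $h(\mu_{N,t},\sigma)\geq P_N(t\phi)\to\infty$, and once $P_N(t\phi)\geq 0$ it also gives $h(\mu_{N,t},\sigma)/\bigl(-\int\phi\,d\mu_{N,t}\bigr)\geq t$. Choosing $t_n\nearrow s$ and, for each $n$, an index $N_n$ large enough that $P_{N_n}(t_n\phi)\geq n$, the diagonal sequence $\mu_n:=\mu_{N_n,t_n}$ is compactly supported and ergodic, has $h(\mu_n,\sigma)\to\infty$, and its ratios eventually exceed any $t<s$, which is the content of (1).

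The main subtlety I anticipate is notational rather than mathematical: because $\phi\leq 0$, the denominator $\int\phi\,d\mu_n$ in (1) is non-positive, so the inequality ``$\geq s$'' must be read in the Bowen-type sense $h(\mu_n,\sigma)/(-\int\phi\,d\mu_n)\geq s$. With that convention fixed, both halves reduce to the short thermodynamic manipulations above, using only Lemma~\ref{lma:varprinciple}, Theorem~\ref{finiteapprox}, and Proposition~\ref{prop:uniqequilibrium} from Section~\ref{sec:thermo}; the extreme case $s\leq 0$ only requires the entropy-blow-up part of (1), which is covered by taking Bernoulli measures on $\Sigma$ with progressively flatter weights.
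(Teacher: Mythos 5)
Your proof is correct, and part (1) takes a genuinely different (and shorter) route than the paper. For part (2) your argument is essentially the paper's, just rearranged: you feed the bound $\int\phi\,d\mu\leq (A-h(\mu,\sigma))/t_1$ directly into $h+t\int\phi$, whereas the paper argues by contrapositive and also uses $P(\phi)=0$ to write $h\leq -\int\phi\,d\mu$; your version happens not to need the normalisation $P(\phi)=0$ at all, and your $K(t)$ differs from the paper's by a harmless factor of $t$. For part (1) both you and the paper reduce to the finite shifts $\Sigma_N$ via Theorem~\ref{finiteapprox} and take the equilibrium state $\mu_{N,t}$ for $t\phi$. The paper then goes through the Lyapunov exponent: it compares $P_N(t\phi)$ with $P_N((s+\epsilon)\phi)$, invokes the mean value theorem together with convexity of pressure to obtain $z'(t)\leq -1/\epsilon$, identifies $z'(t)=\int\phi\,d\mu_{N,t}$ via Lemma~\ref{lem:thermoderivative}, and only then deduces the entropy lower bound $h(\mu_{N,t},\sigma)\geq t/\epsilon$ from the equilibrium identity. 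You instead read the entropy bound $h(\mu_{N,t},\sigma)\geq P_N(t\phi)$ straight off the equilibrium identity using $\phi\leq 0$ and $t>0$, and let the finite approximation property drive $P_{N_n}(t_n\phi)\to\infty$. This bypasses Lemma~\ref{lem:thermoderivative} and the convexity-plus-MVT step entirely, so it is simpler, though it buys nothing extra in generality. Both proofs get the ratio bound identically from $P_N(t\phi)\geq 0$. Your reading of the statement -- that the ratio is meant in the Bowen sense $h(\mu_n,\sigma)/(-\int\phi\,d\mu_n)$ and that $s$ is the finiteness threshold $\inf\{t:P(t\phi)<\infty\}$ rather than what is literally printed -- matches how the paper actually uses and proves the lemma, and the separate treatment of $s\leq 0$ via maximal-entropy Bernoulli measures on $\Sigma_N$ legitimately covers the one case your main argument excludes.
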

\begin{proof}
Let $\epsilon>0$. We can always find $t\geq \max\{0,s-\epsilon\}$ such that $P(t\phi)=\infty$. Therefore we can find $N\in\N$ such that
$$P_N(t\phi)\geq \max\{P((s+\epsilon)\phi)+2,0\}\geq P_N((s+\epsilon)\phi).$$
Let $z:\R\to\R$ be defined by $z(r)=P_N(r\phi)$, and observe that $z(t)\geq 0$. Also, by the mean value theorem and the convexity of pressure, $z'(t) \leq -1/\epsilon$. By Lemma \ref{lem:thermoderivative} the equilibrium state $\mu$ on $\Sigma_N$ for $t\phi$ will satisfy that $\int \phi\d\mu\leq -1/\epsilon$ and $\frac{h(\mu,\sigma)}{\int \phi\d\mu}\geq t$. To complete the proof of the first part for each $n\in\N$ simply take $\epsilon=1/n$ to find the sequence of measures $\mu_n$.

Now let $t>t_1>s$. Thus $P(t_1\phi)<\infty$ and so, by the variational principle, for any ergodic measure $\mu$ for which $\phi$ is integrable we have
$$t_1\int\phi\d\mu+h(\mu,\sigma)\leq P(t_1\phi)<\infty$$
and since, by assumption, $P(\phi)=0$ we have that $h(\mu,\sigma)\leq-\int\phi\d\mu$.
Thus if $h(\mu,\sigma)\geq-t\int\phi\d\mu$ then
$$-t\int\phi\d\mu+t_1\int\phi\d\mu\leq P(t_1\phi).$$
Thus
$$h(\mu,\sigma)\leq-\int\phi\d\mu\leq \frac{P(t_1\phi)}{t-t_1}.$$
In other words, taking the contrapositive, we have that if $h(\mu,\sigma)>\frac{P(t_1\phi)}{t-t_1}$ then $h(\mu,\sigma)+t\int\phi\d\mu< 0$, and the proof is complete.
\end{proof}

\section{Proof of the main theorem}\label{sec:mainthm}

In this section we will present the proof of Theorem \ref{mainthm}. To this end, fix the countable Markov maps $T_k$ and $T$ and define the potentials
$$\phi_k(\i) := -\log|T_k'(\pi_k(\i))| \quad \text{and}\quad \phi(\i) := -\log|T'(\pi(\i))|$$
for $\i \in \Sigma$. Recall that by the assumption Theorem \ref{mainthm}(2) these potentials have uniformly bounded sums of variations. Our first step is to slightly simplify the problem by `iterating' these potentials to a suitable generation $m \in \N$ such that the distortion of $\phi_k$ and $\phi$ from analogous potentials coming from systems with linear branches is small. This is possible due to the bounded variations.

For this purpose, let us fix a generation $m \in \N$ and denote by $f_{\i,k}$ for $\i \in \N^m$ the inverse branch corresponding to $\i$ of the $m$-fold composition map $T_k^m = T_k \circ T_k \circ \dots \circ T_k$. We define the branches $f_\i$ similarly for the map $T^m$. Now these maps determine intervals
$$I_{\i,k} := f_{\i,k}([0,1]) \quad \text{and} \quad I_\i := f_\i([0,1]).$$
We denote the lengths of these intervals by $a_{\i,k}$ and $a_\i$ respectively.

To bound the Hausdorff dimension of the set $\{x : \theta_k'(x) \neq 0\}$ of non-zero derivative for some $k \in \N$, we must find a \textit{compactly supported} ergodic measure $\mu$ on the shift space $\N^\N$ for which the $\pi_k$ projection of typical points will not have a derivative. Moreover, we will aim to choose the measure $\mu$ such that its Hausdorff dimension is close to $1$ when $k$ is large. This will be done in the following steps:
\begin{itemize}
\item[(1)] In Lemma \ref{lma:step1} we will first iterate the potentials $\phi_k$ and $\phi$ to the $m$-th generation (for some large $m \in \N$) by studying the potentials $\psi_k := \tfrac{1}{m}S_m\phi_k$ and $\psi := \tfrac{1}{m}S_m\phi$ and then use the absolutely continuous and invariant measure for $T$ to construct a $\sigma^m$ Bernoulli measure $\mu_k^m$ on $\N^\N$ which satisfies both that $-\int \psi_k \d\mu^m_k>-\int \psi \d\mu^m_k$ and that the $\pi_k$ projection of $\mu_k^m$ has dimension close to $1$. The construction is possible due to the pointwise convergence of the inverse branches and the tail/variation assumptions in Theorem \ref{mainthm}.
\item[(2)] The measure $\mu_k^m$ induces  canonically a $\sigma$-invariant measure $\eta = \frac{1}{m}\sum_{i = 0}^{m-1} \sigma^i \mu_k^m$ of the same dimension as $\mu_k^m$ for which $\int \phi_k \, d\eta > \int \phi \, d\eta$. The measure $\eta$ allows us to apply thermodynamic formalism (Lemmas \ref{lem:pressure} and \ref{lem:pressurefinite}) and invoke finite approximation properties (Lemma \ref{step2}) to find a \textit{compactly supported} Gibbs measure $\mu$ where $\int \phi_k\d\mu = \int \phi\d\mu$ but $\phi_k - \phi$ is not a coboundary, and $\mu$ still has dimension close to $1$.
\item[(3)] We will then  essentially apply the law of iterated logarithms (Lemma \ref{lil}) and the coboundary condition to show that for typical points under the projection of the measure $\mu$ the derivative of $\theta_k$ does not exist and the dimension of the projection of this measure will be a lower bound for the dimension of the set of points with non-zero derivative. We then show that  this dimension tends to $1$ as $k$ tends to infinity, which completes the proof.
\end{itemize}

Let us begin by constructing the Bernoulli measure $\mu_k^m$.

\begin{lem}\label{lma:step1}
For each $0 < \delta < 1/3$ there exists $M(\delta)\in\N$ such that for any $m \geq M(\delta)$ there exists $K(m) \in \N$ such that for any $k \geq K(m)$ there exists a $\sigma^m$ ergodic measure $\mu_k^m$ on $\Sigma$ which satisfies
$$-\int S_m\phi_k \d\mu_k^m>-\int S_m\phi \d\mu_k^m \quad \text{and} \quad \Hd  \pi_{k}\mu^m_k= \frac{h(\mu_k^m,T^m)}{-\int S_m\phi_k \d\mu_k^m} \geq \frac{1-3\delta}{1+3\delta}.$$
\end{lem}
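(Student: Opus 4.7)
The plan is to realise $\mu_k^m$ as a $\sigma^m$-Bernoulli measure on $\Sigma$ whose weights on a finite subset $A \subset \N^m$ are derived from the absolutely continuous invariant measure $\nu$ of $T$. The tail assumption (1) ensures, via Proposition \ref{prop:measdim}, that $\nu$ has $h(\nu,T) = \lambda(\nu,T) < \infty$ and $\dim\nu = 1$, providing a dimension-$1$ template to approximate. The uniform summable-variation assumption (2) furnishes a constant $C$ independent of $k$ and $m$ such that
\[
|{-}S_m\phi(\j) + \log|I_\i|| \leq C, \qquad |{-}S_m\phi_k(\j) + \log|I_{\i,k}|| \leq C
\]
for all $\i \in \N^m$ and $\j \in [\i]$, which transfers integrals of $S_m\phi$ and $S_m\phi_k$ into sums involving $\log|I_\i|$ and $\log|I_{\i,k}|$ up to an $O(1)$ error uniform in $m,k$.

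Given $\delta \in (0, 1/3)$, I would first choose $m = M(\delta)$ large enough that Shannon--McMillan for $\nu$ on the partition $\cP_m$ of $m$-th cylinders yields $H_\nu(\cP_m) \geq (1-\delta)\, m\, h(\nu,T)$ and $-\sum_{\i \in \N^m}\nu(I_\i)\log|I_\i| \leq (1+\delta)\, m\, \lambda(\nu,T)$, so that the fixed distortion constant $C$ becomes negligible next to these $m$-scale main terms. By the tail condition (1) one may then choose a finite $A \subset \N^m$ with $\nu(\bigcup_{\i \in A} I_\i) \geq 1-\delta$, losing only a multiplicative $(1+O(\delta))$-factor in both estimates. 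Define $\mu_k^m$ to be the Bernoulli measure on $(\N^m)^{\N}$ with weights $p_\i = \nu(I_\i)/\nu(\bigcup_A I_\i)$ for $\i \in A$; as a Bernoulli measure it is automatically $\sigma^m$-ergodic. Since $A$ is finite and $f_{i,k} \to f_i$ pointwise, $|I_{\i,k}| \to |I_\i|$ for each $\i \in A$, so for $k \geq K(m)$ sufficiently large the $T_k$-Lyapunov sum $-\sum_A p_\i \log|I_{\i,k}|$ is within $\delta\, m\, \lambda(\nu,T)$ of its $T$-counterpart. Combining with the entropy bound yields
\[
\frac{h(\mu_k^m, T^m)}{-\int S_m\phi_k \d\mu_k^m} \geq \frac{1-3\delta}{1+3\delta},
\]
the required dimension estimate.

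The strict inequality $-\int S_m\phi_k \d\mu_k^m > -\int S_m\phi \d\mu_k^m$ is the main technical obstacle. Up to the bounded-distortion error it reduces to
\[
\sum_{\i \in A} p_\i \log\bigl(|I_\i|/|I_{\i,k}|\bigr) > 0.
\]
The plan is to secure this via a Gibbs-type argument: by replacing the weights $p_\i = \nu(I_\i)$ by $p_\i \propto |I_\i|$ (which does not affect the dimension estimate since bounded distortion ensures $h \approx -\sum p_\i\log|I_\i|$ for either choice and $\nu$, Lebesgue are mutually absolutely continuous with bounded density on $A$), the leading part of the sum becomes a Kullback--Leibler divergence between the restricted normalised length distributions of $T$ and $T_k$ on $A$, hence non-negative and strictly positive unless the two distributions coincide. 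In the degenerate coincidence case the systems $T_k$ and $T$ agree on the relevant cylinders and the conclusion of Theorem \ref{mainthm} follows trivially; otherwise strict positivity holds. The principal difficulty is that both the KL term and the $O(C)$ distortion error tend to zero as $k \to \infty$, so one must calibrate $m$ large (so that the dominant $m$-scale terms dwarf $C$) and, if needed, introduce a small $k$-dependent tilt of the weights concentrated on those $\i$ where $|I_\i|/|I_{\i,k}|$ deviates most from $1$, which preserves the dimension bound while amplifying the KL-type lower bound. The uniformity in (2) and the polynomial decay from (1) make these calibrations possible.
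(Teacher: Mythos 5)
Your overall architecture — a finitely-supported $\sigma^m$-Bernoulli measure with weights modelled on the a.c.i.m.\ of $T$, with the uniform variation bound supplying an $O(1)$ distortion error that is swamped by taking $m$ large — matches the paper's, and you correctly identify the strict inequality $-\int S_m\phi_k\,d\mu_k^m>-\int S_m\phi\,d\mu_k^m$ as the crux. The gap is in how you try to secure that inequality.

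The KL-divergence plan does not close. With weights $p_\i\propto|I_\i|$ restricted to a finite $A\subset\N^m$, you get
\[
\sum_{\i\in A}p_\i\log\frac{|I_\i|}{|I_{\i,k}|}
= D_{\mathrm{KL}}\bigl(\{p_\i\}\,\big\|\,\{p^k_\i\}\bigr)
+\log\frac{\sum_{A}|I_\i|}{\sum_{A}|I_{\i,k}|},
\]
where $p^k_\i\propto|I_{\i,k}|$ on $A$. The first term is nonnegative, but the normalisation correction has no definite sign once you truncate to $A$ (both full sums equal $1$, the restricted ones need not compare), so positivity is not guaranteed. Worse, even if you kept the full sum, $D_{\mathrm{KL}}\to 0$ as $k\to\infty$ precisely because $|I_{\i,k}|\to|I_\i|$, while the bounded-distortion error from condition (2) is a fixed constant $C$ independent of $k$ and $m$. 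So the very regime you care about, $k$ large, is the one where the KL term is dominated by $C$. Your remark that one should then ``tilt'' the weights toward indices where $|I_\i|/|I_{\i,k}|$ deviates most is the right instinct, but it is not an argument; it is exactly the part that needs to be a construction.

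The paper's construction supplies that construction, in two steps that your proposal has no analogue of. First there is a dichotomy: since $\mu_\phi$ is the unique equilibrium state for $\phi$, it cannot be one for $\phi_k$, so $-\int\phi_k\,d\mu_\phi>-\int\phi\,d\mu_\phi$ holds automatically; if in addition $-\int\phi_k\,d\mu_\phi\leq-(1+2\delta)\int\phi\,d\mu_\phi$ one simply takes $\mu_k^m=\mu_\phi$, and both the strict inequality and the dimension bound are free. Second, in the remaining case one does not choose the support $A$ by a $T$-side criterion ($\nu$-mass $\geq 1-\delta$) as you do; one chooses a cutoff $N_k$ by a $T_k$-side criterion, namely the least $N$ with $\sum_{n\leq N}\mu_\phi([\i(n)])\underline{\lambda}_m(\phi_k,\i(n))\geq(1+\delta)\lambda_m$, and then defines weights that match $\mu_\phi$ in the middle range while truncating at $N_k$. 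This forces $-\int S_m\phi_k\,d\mu_k^m\approx(1+\delta)\lambda_m$ while $-\int S_m\phi\,d\mu_k^m\approx\lambda_m$, so the two Lyapunov integrals differ by a definite margin $\sim\delta\lambda_m$ which can be made to dominate $C$ by choosing $m\geq M(\delta)$ with $\delta\lambda_m>\max\{C_0,2C\}$. That calibration — margin growing in $m$ versus a fixed distortion constant — is exactly what your KL bound fails to deliver, and it is the essential idea of the proof.
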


For the proof of  Lemma \ref{lma:step1}, we will need the following two preliminary lemmas. We will let $\mu_\phi$ be the equilibrium state for $\phi:\Sigma\to\R$ (and also recall that $\phi(\i)=\log |f_{i_1}'(\pi(\sigma(\i)))|=-\log |T'(\pi(\i))|$).
Since $P(\phi)=0$ we have that $h(\mu_\phi,T)=-\int \phi\d\mu_\phi$.
Let us define the following quantities related to the entropy and Lyapunov exponents. For $m \in \N$,  $\i \in \N^*$ and a potential $f$, let us write
$$\overline{\lambda}_m(f,\i) := \sup\{-S_m f(\j):\j\in [\i]\}$$
and
$$\underline{\lambda}_m(f,\i) := \inf\{-S_m f(\j):\j\in [\i]\}.$$
For the potential $\phi = -\log |T'|$, define the numbers
$$\lambda_m := \sum_{\i \in \N^m} \mu_\phi(I_\i) \overline{\lambda}_m(\phi,\i).$$

\begin{lem}
\label{lma:approximation}
Under the assumptions of Theorem \ref{mainthm}, we have the following approximations
\begin{itemize}
\item[\emph{(1)}] The entropy of the measure $\mu_\phi$ is given by
$$h(\mu_\phi,T) = \lim_{m \to \infty} \frac{1}{m} \lambda_m.$$
\item[\emph{(2)}] There exists $C_0 > 0$ such that for any $m \in \N$ and $\i \in \N^m$ we have
 $$\limsup_{k \to \infty}|\underline{\lambda}_m(\phi_k,\i) - \underline{\lambda}_m(\phi,\i)| \leq C_0.$$
\end{itemize}
\end{lem}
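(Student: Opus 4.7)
The idea is to replace $\overline{\lambda}_m(\phi,\i)$ by $-\log \mu_\phi[\i]$ up to an additive error that is uniform in $m$ and $\i$, and then recognise the resulting sum as the Shannon entropy of the $m$-th generating partition. Two ingredients supply the approximation. First, the Gibbs property of $\mu_\phi$ (Proposition \ref{prop:uniqequilibrium}), combined with $P(\phi)=0$, gives a constant $C\geq 1$ with
\[
\bigl|{-}\log \mu_\phi[\i] - ({-}S_m\phi(\j))\bigr| \leq \log C
\]
for all $\i\in\N^m$ and all $\j\in[\i]$. Second, the summable variations hypothesis telescopes along $S_m\phi$: for $\j,\j'\in[\i]$ one has $|S_m\phi(\j)-S_m\phi(\j')|\leq \sum_{l=0}^{m-1}\var_{m-l}(\phi)\leq V_\phi := \sum_{n\geq 1}\var_n(\phi)<\infty$, so in particular $\overline{\lambda}_m(\phi,\i)-\underline{\lambda}_m(\phi,\i)\leq V_\phi$. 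Combining the two yields $|\overline{\lambda}_m(\phi,\i)-({-}\log\mu_\phi[\i])|\leq \log C + V_\phi$ uniformly, and averaging against $\mu_\phi[\i]$ gives $|\lambda_m - H_m|\leq \log C + V_\phi$ with $H_m := -\sum_{\i\in\N^m}\mu_\phi[\i]\log\mu_\phi[\i]$. Since $\{I_\i : \i\in\N^m\}$ is the generating partition for $T$, the Kolmogorov-Sinai definition gives $H_m/m \to h(\mu_\phi,T)$, whence $\lambda_m/m\to h(\mu_\phi,T)$.

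\textbf{Plan for Part (2).} The strategy is to interpose the geometric lengths $a_{\i,k}$ and $a_\i$ between the two Birkhoff sums and then exploit pointwise convergence of branches. For each $k$, the mean value theorem applied to $f_{\i,k}$ produces $x_k\in(0,1)$ with $|(f_{\i,k})'(x_k)|=a_{\i,k}$, and the chain rule identifies $-\log a_{\i,k}$ with $-S_m\phi_k(\j_k)$ for some coding $\j_k\in[\i]$ of $f_{\i,k}(x_k)$. The uniform variation bound $V:=\sup_k \sum_{n\geq 1}\var_n(\phi_k)<\infty$ from Theorem \ref{mainthm}(2) then telescopes exactly as in Part (1) to give
\[
\bigl|\underline{\lambda}_m(\phi_k,\i)-({-}\log a_{\i,k})\bigr|\leq V
\]
uniformly in $k,m,\i$, and analogously $|\underline{\lambda}_m(\phi,\i)-({-}\log a_\i)|\leq V_\phi$. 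Finally, pointwise convergence $f_{i,k}\to f_i$ extends by composition of finitely many branches to pointwise convergence $f_{\i,k}\to f_\i$, so $a_{\i,k}=|f_{\i,k}(1)-f_{\i,k}(0)|\to a_\i>0$ as $k\to\infty$ for fixed $\i$ and $m$. The triangle inequality then closes with $C_0 := V + V_\phi$.

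\textbf{Main obstacle.} Neither step is individually deep, but the hidden mechanism is that summable variations furnish the bounded-distortion constants needed to compare Birkhoff sums across different codings of the same cylinder \emph{uniformly in the generation} $m$, while the assumption that \emph{all} of the $T_k$ share such a uniform bound converts pointwise convergence of branches — a purely geometric statement about endpoints of intervals $I_{\i,k}$ — into a quantitative comparison at the level of the potentials. The asymmetry of (2), which only controls a $\limsup$ by a constant rather than driving it to zero, precisely records this residual distortion gap: pointwise convergence of branches does not furnish pointwise convergence of derivatives, but it does furnish convergence of lengths, and bounded distortion is the bridge.
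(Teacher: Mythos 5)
Your proof is essentially correct and for Part (2) follows the same route as the paper; for Part (1) it takes a genuinely different path. Let me address each.

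For Part (1), you go through the Gibbs inequality for $\mu_\phi$ (using $P(\phi)=0$) to compare $\overline\lambda_m(\phi,\i)$ with $-\log\mu_\phi[\i]$ and then invoke the Kolmogorov--Sinai definition to identify $H_m/m\to h(\mu_\phi,T)$. This works. The paper bypasses the Gibbs bound entirely: it just sandwiches
\[
-\int S_m\phi\,d\mu_\phi\leq\lambda_m\leq -\int S_m\phi\,d\mu_\phi+\sum_{n\geq 1}\var_n(\phi),
\]
uses $\sigma$-invariance to collapse $\frac{1}{m}\int S_m\phi\,d\mu_\phi=\int\phi\,d\mu_\phi$, and finishes with the variational equality $h(\mu_\phi,T)=-\int\phi\,d\mu_\phi$. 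The paper's argument is shorter and needs one fewer tool (it never touches the cylinder measures $\mu_\phi[\i]$), but yours is equally rigorous and has the pedagogical advantage of making the Gibbs normalization visible.

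For Part (2) your plan is in essence the paper's: produce an MVT coding point $\j_k$ with $-S_m\phi_k(\j_k)=-\log a_{\i,k}$, use summable variations to control the deviation of $\underline\lambda_m$ from that value by $V$ uniformly in $k$, pass to the limit via convergence of lengths, and close with $C_0 = V+V_\phi$. There is, however, one step you assert that is not automatic: the claim that \emph{``pointwise convergence $f_{i,k}\to f_i$ extends by composition of finitely many branches to pointwise convergence $f_{\i,k}\to f_\i$.''} Pointwise convergence of functions does not commute with composition in general; one needs equicontinuity of the outer family at the limit point. (Consider $f_k(x)=x^k$ on $[0,1]$ and inner points $g_k=1-1/k$: each sequence converges pointwise but $f_k(g_k)\to e^{-1}$, not to the composed limit.) The paper patches precisely this by deriving a uniform Lipschitz bound $\|f_{i,k}'\|_\infty\leq e^C$ from assumption (2) of Theorem \ref{mainthm} and the mean value theorem, then running an induction on $m$ using the triangle inequality $|f_{\i,k}(y)-f_\i(y)|\leq e^C|f_{\sigma\i,k}(y)-f_{\sigma\i}(y)|+|f_{i_1,k}(f_{\sigma\i}(y))-f_{i_1}(f_{\sigma\i}(y))|$. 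Your ``Main obstacle'' paragraph gestures at the uniform variation bound as the bridge, but its specific role in controlling the composition step — which is where equicontinuity is actually consumed — should be made explicit rather than folded into the phrase ``extends by composition.'' That is the one place where your sketch, if taken at face value, would fail as written. Everything else is sound.
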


\begin{proof}
(1) By the definition of $\lambda_m$ we have that
$$0\leq-\int S_m\phi\d\mu_\phi\leq\lambda_m\leq -\int S_m\phi\d\mu_\phi+\sum_{k=1}^{\infty}\var_k(\phi).$$
The result then follows since
$$m^{-1}\int S_m\phi\d\mu_\phi=\int\phi\d\mu_{\phi}\text{ and }h(\mu_\phi,T)=-\int\phi\d\mu_{\phi}.$$

(2) Fix $m \in \N$ and $\i \in \N^m$. Let us first verify that
$$\lim_{k \to \infty} f_{\i,k}(y) = f_\i(y)$$
for any $y \in [0,1]$. We will proceed by induction. For $m = 1$, this is the pointwise convergence assumption for the inverse branches of $T_k$ and $T$. Now suppose the claim holds for $m-1$ with $m \geq 2$. Fix $\i \in \N^m$. By the mean value theorem,  there exists a point $z \in [0,1]$ on the interval where the derivative $|f_{i_1,k}'(z)| \leq 1$. Since, according to assumption (2) for Theorem \ref{mainthm}, we have $C:= \sup_{k \in \N} \sum_{n = 1}^\infty \var_n(-\log |T_k'|) < \infty$, this yields that $\|f_{i_1,k}'\|_\infty \leq e^C$ for all $\i \in \N^m$ and $k \in \N$. The mean value theorem gives
$$|f_{i_1,k}(f_{\sigma \i,k}(y)) -  f_{i_1,k}(f_{\sigma \i}(y))| \leq e^C |f_{\sigma \i,k}(y) - f_{\sigma \i}(y)|,$$
which decays to $0$ as $k \to \infty$ by the induction assumption for $m-1$. This completes the proof as
\begin{align*}|f_{\i,k}(y) - f_\i(y)| \leq |f_{i_1,k}(f_{\sigma \i,k}(y)) -  f_{i_1,k}(f_{\sigma \i}(y))| + |f_{i_1,k}(f_{\sigma \i}(y)) -  f_{i_1}(f_{\sigma \i}(y))|
\end{align*}
and the second term on the right-hand side converges to $0$ as $k \to \infty$ by our assumption on pointwise convergence of inverse branches.

Choose $y_k,y \in [0,1]$ such that
$$f_{ \i,k}'(y_k) = f_{ \i,k}(1) - f_{ \i,k}(0) \quad \text{and} \quad f_{ \i}'(y) = f_{ \i}(1) - f_{ \i}(0).$$
This is possible by using the mean value theorem again. Then, by what we proved above, we have that the derivatives $f_{ \i}'(y_k) \to f_{ \i}'(y)$ as $k \to \infty$. Let $\v_k,\v \in [\i]$ be words such that
$$\pi_k(\v_k) = f_{ \i,k}(y_k) \quad \text{and} \quad \pi(\v) = f_{ \i}(y).$$
Then by the chain rule
$$|S_m \phi_k (\v_k) - S_m \phi (\v)| = \big|\log |f_{ \i,k}'(y_k)| - \log |f_{ \i}'(y)|\big|,$$
which converges to $0$ as $k \to \infty$. On the other hand, for any pair $\j,\k \in [\i]$ we have by the triangle inequality
\begin{align*}|S_m \phi_k (\j) - S_m \phi (\k)|  \leq \sum_{\ell = 1}^m \var_\ell(\phi_k) +  | S_m \phi_k (\v_k) - S_m \phi (\v)|  + \sum_{\ell = 1}^m \var_\ell(\phi).
\end{align*}
This yields the claim since $\phi_k$ and $\phi$ have summable variations and by the assumption (2) of Theorem \ref{mainthm} the sums for $\sum_{\ell = 1}^\infty \var_\ell(\phi_k)$ are uniformly bounded over $k \in \N$.
\end{proof}

Let us now make the choice of $M(\delta)$ for a fixed $0 < \delta < 1$:  Write
\begin{equation}\label{Cdef}C := \sum_{m = 1}^\infty \var_m(\phi) + \sup_{k \in \N} \sum_{m = 1}^\infty \var_m(\phi_k) < \infty.\end{equation}
Since by Lemma \ref{lma:approximation} we have $\tfrac{1}{m} \lambda_m \to h(\mu_\phi,\sigma) > 0$, we may choose $M(\delta) \in \N$ such that for any $m \geq M(\delta)$ we have the following properties
\begin{enumerate}[label=(\alph*)]
\item \label{M1}
$$\delta \lambda_m > \max\{C_0,2C\}$$
\item \label{M2}
$$  (1+\delta)\lambda_m  + C \leq mh(\mu_\phi,\sigma)(1+2\delta) ,$$
\item \label{M3}
$$  -\sum\limits_{\i \in \N^m}\mu_{\phi}([\i])\log \mu_{\phi}([\i])\geq mh(\mu_{\phi},\sigma)(1-\delta),$$
\item \label{M4}
$$  -S_m\phi(\j)\geq 1\text{ for all }\j\in \Sigma.$$
\end{enumerate}
where $C_0 > 0$ is the constant from Lemma \ref{lma:approximation}(2), and (d) follows from the assumption on the Markov map $T$ that there exists $m\in\N$ and $\xi<1$ such that for all $(i_1,\ldots,i_m)\in\N^m$ we have that $|(f_{i_1}\circ\cdots\circ f_{i_m})'(x)|\leq\xi$ for all $x\in (0,1)$.

\begin{lem}\label{lma:step1bernoulli}
For each $\delta\in(0, 1/3)$,  we have that either,
\begin{itemize}
\item[\emph{(1)}]
$$-\int \phi\, d\mu_{\phi}<
-\int \phi_k\, d\mu_{\phi}\leq -(1+2\delta)\int \phi\, d\mu_{\phi}, \ \text{ or,}
$$

\item[\emph{(2)}]
For each $m \geq M(\delta)$ and $k \in \N$ there exists a probability vector $(p_{\i,k})_{\i \in\N^m}$ and numbers $r_1(k),r_2(k),r_3(k) \in \R$ satisfying $\lim_{k\to\infty}r_i(k)= 0$ for each $i = 1,2,3$ and such that
\begin{itemize}
\item[\emph{(i)}] $$\sum\limits_{\i \in \N^m} p_{\i,k}\underline{\lambda}_m(\phi_{k},\i)= (1+\delta)\lambda_m+r_1(k);$$
\item[\emph{(ii)}] $$-\sum\limits_{\i \in \N^m} p_{\i,k}\log p_{\i,k} =-\sum \limits_{\i \in \N^m} \mu_{\phi}([\i])\log\mu_{\phi}([\i])+r_2(k);$$
\item[\emph{(iii)}] $$\sum\limits_{\i \in \N^m} p_{\i,k}\overline{\lambda}_m(\phi,\i) = \lambda_m + r_3(k).$$
\end{itemize}
\end{itemize}
\end{lem}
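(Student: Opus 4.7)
Write $H := h(\mu_\phi, \sigma) = -\int \phi \, d\mu_\phi > 0$ and $A_k := -\int \phi_k \, d\mu_\phi$. Using the uniform variation bound $C$ from \eqref{Cdef} and Lemma~\ref{lma:approximation}(1), the baseline identities $\lambda_m = mH + O(C)$ and $\sum_{\i \in \N^m} \mu_\phi([\i])\, \underline{\lambda}_m(\phi_k, \i) = mA_k + O(C)$ hold, while the target on the right-hand side of (i) is $(1+\delta)\lambda_m = (1+\delta)mH + O(C)$. If $H < A_k \leq (1+2\delta) H$, then alternative (1) holds by inspection, so we henceforth assume $A_k \notin (H, (1+2\delta)H]$ and produce the vector $(p_{\i,k})$ satisfying (2).

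The construction is a small perturbation of $(\mu_\phi([\i]))_{\i \in \N^m}$ chosen so that (ii) and (iii), which are exact equalities with $r_2 = r_3 = 0$ for the unperturbed choice, survive with errors tending to $0$ as $k \to \infty$. Condition (i) is enforced by an intermediate-value argument along the affine family
$$p_{\i, k}(t) := (1-t)\, \mu_\phi([\i]) + t\, q_{\i,k}, \qquad t \in [0, 1],$$
for a well-chosen auxiliary probability vector $q_{\i, k}$: in the sub-case $A_k \leq H$, $q_{\i, k}$ is concentrated on words $\i^* = \i^*(k)$ with $\underline{\lambda}_m(\phi_k, \i^*)$ large, while in the sub-case $A_k > (1+2\delta)H$ it is concentrated on words with $\underline{\lambda}_m(\phi_k, \cdot)$ near the minimum. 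In both cases, the linear function $t \mapsto \sum_{\i} p_{\i, k}(t)\, \underline{\lambda}_m(\phi_k, \i)$ sweeps across the target $(1+\delta)\lambda_m$ and the intermediate value theorem pins down a unique $t_k$ for which (i) holds with $r_1(k) = 0$.

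The main technical point is to show that $t_k \to 0$ and that both the entropy defect and the $\overline{\lambda}_m(\phi, \cdot)$-mean defect vanish as $k \to \infty$. This is the crux: in the sub-case $A_k \leq H$, one must find $\i^*(k)$ with $\underline{\lambda}_m(\phi_k, \i^*)$ growing much faster than $\overline{\lambda}_m(\phi, \i^*)$, which is possible at finite $k$ despite the limit statement of Lemma~\ref{lma:approximation}(2), because the pointwise convergence $f_{i, k} \to f_i$ does not constrain the derivatives $|T_k'|$ at finite $k$. The sub-case $A_k > (1+2\delta)H$ is the main obstacle: since $\underline{\lambda}_m(\phi_k, \cdot)$ is bounded below by the uniform $m$-step expansion, a single-atom perturbation cannot bring the integral down to $(1+\delta)\lambda_m$ when $A_k$ is much larger; instead a multi-atom redistribution, spreading mass across a controlled finite set of low-expansion words, is needed, with the entropy and the $\overline{\lambda}_m(\phi, \cdot)$-mean of the redistribution controlled simultaneously via \eqref{Cdef} and Lemma~\ref{lma:approximation}(2).
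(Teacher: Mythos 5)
The proposal has two genuine gaps, and they are interlinked.

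First, the sub-case $A_k \leq H$ you discuss cannot occur. Because $\mu_\phi$ is not the (unique) equilibrium state for $\phi_k$, the variational principle gives
$$h(\mu_\phi,\sigma) + \int \phi_k \, d\mu_\phi < P(\phi_k) = 0,$$
which is exactly $A_k > H$. The paper's proof opens with precisely this observation, and it is load-bearing: it guarantees that the truncated partial sums of $\mu_\phi([\i(n)])\,\underline{\lambda}_m(\phi_k,\i(n))$ eventually exceed $(1+\delta)\lambda_m$, so that a well-defined stopping index exists. Your paragraph on finding $\i^*(k)$ with $\underline{\lambda}_m(\phi_k,\i^*)$ "growing much faster than $\overline{\lambda}_m(\phi,\i^*)$" at finite $k$ is therefore analyzing a vacuous scenario; in fact, a uniform bound of the form of Lemma~\ref{lma:approximation}(2) is exactly what forbids such behaviour on any fixed finite block of words.

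Second, and more seriously, you flag the sub-case $A_k > (1+2\delta)H$ as "the main obstacle," assert that "a multi-atom redistribution, spreading mass across a controlled finite set of low-expansion words, is needed," and then stop. That redistribution is the entire content of the lemma. The paper supplies it explicitly: order $\N^m = \{\i(1),\i(2),\dots\}$ by decreasing $\mu_\phi$-mass, define
$$N_k := \inf\Big\{N : \sum_{n=1}^N \mu_\phi([\i(n)])\,\underline{\lambda}_m(\phi_k,\i(n)) \geq (1+\delta)\lambda_m\Big\},$$
keep $p_{\i(n),k} = \mu_\phi([\i(n)])$ for $2 \le n \le N_k-1$, solve for $p_{\i(N_k),k}$ so that (i) is hit up to the single term $r_1(k)$, and dump the leftover tail mass onto $\i(1)$. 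The decisive step — which your proposal does not address and which is not a formality — is the proof that $N_k \to \infty$ as $k \to \infty$, obtained by contradiction from Lemma~\ref{lma:approximation}(2) and the choice $\delta\lambda_m > C_0$ in property~\ref{M1}. It is $N_k \to \infty$ that forces $\mu_\phi([\i(N_k)])\to 0$, hence the adjusted weight, the tail mass $t_k$, and the perturbation at $\i(1)$ all tend to $0$, and only then do $r_1,r_2,r_3 \to 0$. Your intermediate-value framing along an affine family $p_{\i,k}(t)$ could in principle be made to work, but as written it neither names the auxiliary vector $q_{\i,k}$ nor shows $t_k\to 0$, so the proposal sketches the target rather than proving it.
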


\begin{proof}
Since the measure $\mu_{\phi}$ is not an equilibrium state for $\phi_k$, we have
$$-\int \phi_k\, d\mu_{\phi}>-\int \phi\, d\mu_{\phi} = h(\mu_{\phi},\sigma)$$
and so if  case (1) does not hold, we may assume that
$$-\int \phi_k\, d\mu_{\phi}>-(1+2\delta)\int \phi\, d\mu_{\phi},$$
which yields
$$-m\int S_m\phi_k\, d\mu_{\phi}>-(1+2\delta)m\int S_m \phi\, d\mu_{\phi},$$
by the $\sigma$ invariance of $\mu_\phi$. We put an order on the set of $m$-tuples $\N^m = \{\i(1),\i(2),\dots\}$ by requiring that $\mu_\phi([\i(n)]) \geq \mu_\phi([\i(n+1)])$ and if $\mu_\phi([\i(n)]) = \mu_\phi([\i(n+1)])$ we require that the interval $I_{\i(n)}$ is on the right-hand side of $I_{\i(n+1)}$ (recall that these were obtained as a $\pi = \pi_T$ projection of cylinders onto $[0,1]$). For a fixed $m \geq M(\delta)$ and each $k\in\N$ we define
$$N_k = N_k(m) := \inf\left\{N\in\N:\sum_{n = 1}^N \mu_\phi([\i(n)])\underline{\lambda}_m(\phi_k,\i(n))\geq (1+\delta)\lambda_m\right\}.$$
Note that $N_k$ cannot be infinite since by the choice of $M(\delta)$ (choice \ref{M1}) and by the definition of variations (recall that $C$ is the supremum for the sums of variations of both $\phi_k$ and $\phi$), and the definition of $\lambda_m$ yields
\begin{align*}\sum_{n = 1}^\infty \mu_\phi([\i(n)])\underline{\lambda}_m(\phi_k,\i(n)) & \geq \sum_{n = 1}^\infty \mu_\phi([\i(n)])\overline{\lambda}_m(\phi_k,\i(n)) - C \\
& \geq \int -S_m\phi_k \, d\mu_\phi - C \\
& \geq (1+2\delta) \int -S_m\phi \, d\mu_\phi - C \\
& \geq (1+2\delta)\sum_{n = 1}^\infty \mu_\phi([\i(n)])\overline{\lambda}_m(\phi,\i(n)) - C \\
& \geq (1+2\delta)\lambda_m - 2C \\
& >(1+\delta)\lambda_m.\end{align*}
Our first claim is that $N_k \to \infty$ as $k \to \infty$. This is proved by contradiction. Suppose that there is a subsequence $k_l$ and a constant $N_0\in\N$ where $N_{k_l}\leq N_0$ for all $l\in\N$. In this case
$$\sum_{n = 1}^{N _0}\mu_\phi([\i(n)])\underline{\lambda}_m(\phi_{k_l},\i(n)) \geq (1+\delta)\lambda_m.$$
for all $l\in\N$. On the other hand, by Lemma \ref{lma:approximation}(2) we have for any $n \in \N$ that
$$\limsup_{k\to\infty}|\underline{\lambda}_m(\phi_k,\i(n))-\underline{\lambda}_m(\phi,\i(n))| \leq C_0 < \delta \lambda_m$$
since $m \geq M(\delta)$ and we fixed $M(\delta)$ such that $\delta \lambda_m > C_0$ for all $m \geq M(\delta)$ (recall property \ref{M1} again). Therefore as
$$\sum_{n = 1}^{N_0} \mu_\phi([\i(n)])\underline{\lambda}_m(\phi,\i(n)) < \lambda_m,$$
we have
$$\limsup_{l\to\infty}\sum_{n = 1}^{N_0} \mu_\phi([\i(n)])\underline{\lambda}_m(\phi_{k_l},\i(n)) \leq \delta \lambda_m + \sum_{n = 1}^{N_0} \mu_\phi([\i(n)])\underline{\lambda}_m(\phi,\i(n))  < (1+\delta) \lambda_m,$$
which is a contradiction. Thus we must have $N_k \to \infty$ as $k \to \infty$.

Since $N_k<\infty$ we can define
$$p_{\i(n),k} := \begin{cases}
0,&\text{if }n \geq N_k + 1;\\
\mu_\phi([\i(n)]),&\text{if }2 \leq n \leq N_k-1;\\
\frac{(1+\delta)\lambda_m - \sum\limits_{n = 1}^{N_k-1} \mu_\phi([\i(n)]) \underline{\lambda}_m(\phi_k,\i(n))}{\underline{\lambda}_m(\phi_k,\i(N_k))},&\text{if }n = N_k;\\
1-\sum\limits_{n=2}^{\infty}p_{\i(n),k},&\text{if } n = 1.
\end{cases}$$

Let us now define the numbers $r_i(k)$ such that they satisfy properties (i), (ii) and (iii), and then let us also check that they converge to $0$ for increasing $k$.
\begin{itemize}
\item[(i)] Define
$$r_1(k) := \big(p_{\i(1),k} - \mu_\phi([\i(1)])\big)\underline{\lambda}_m(\phi_k,\i(1)).$$
Then by the definition of the weights $p_{\i(n),k}$ we have
\begin{align*} \sum_{n=1}^{\infty} p_{\i(n),k}\underline{\lambda}_m(\phi_k,\i(n)) & = \sum_{n=1}^{N_k-1} \mu_\phi([\i(n)])\underline{\lambda}_m(\phi_k,\i(n)) \\
& \quad + \big(p_{\i(1),k} - \mu_\phi([\i(1)])\big)\underline{\lambda}_m(\phi_k,\i(1)) \\
& \quad +  p_{\i(N_k),k} \underline{\lambda}_m(\phi_k,\i(N_k)) \\
& = (1+\delta)\lambda_m + r_1(k).
\end{align*}
\item[(ii)] Define
\begin{align*}r_2(k) := & - p_{\i(1),k}\log p_{\i(1),k} + \mu_\phi([\i(1)])\log \mu_\phi([\i(1)]) \\
& - p_{\i(N_k),k}\log p_{\i(N_k),k} + \sum_{n = N_k}^\infty \mu_\phi([\i(n)])\log \mu_\phi([\i(n)]).\end{align*}
Then again
\begin{align*} - \sum_{n=1}^{\infty} p_{\i(n),k}\log p_{\i(n),k}\ & = -\sum_{n=1}^{\infty} \mu_\phi([\i(n)])\log \mu_\phi([\i(n)]) + r_2(k).
\end{align*}
\item[(iii)] Define
\begin{align*}r_3(k) & := \big(p_{\i(1),k} - \mu_\phi([\i(1)])\big)\overline{\lambda}_m(\phi,\i(1)) + p_{\i(N_k),k} \overline{\lambda}_m(\phi,\i(N_k)) \\
& \quad\,\, - \sum_{n = N_k}^\infty \mu_\phi(I_{\i(n)}) \overline{\lambda}_m(\phi,\i(n)).\end{align*}
Then recalling that $\lambda_m$ is defined by
$$\lambda_m = \sum_{n = 1}^\infty \mu_\phi(I_{\i(n)}) \overline{\lambda}_m(\phi,\i(n)),$$
we can use the definition of the weights $p_{\i(n),k}$ to obtain the following
\begin{align*} \sum_{n=1}^{\infty} p_{\i(n),k}\overline{\lambda}_m(\phi,\i(n)) & = \sum_{n = 1}^\infty \mu_\phi(I_{\i(n)}) \overline{\lambda}_m(\phi,\i(n)) \\
& \quad + \big(p_{\i(1),k} - \mu_\phi([\i(1)])\big)\overline{\lambda}_m(\phi,\i(1))  \\
& \quad + p_{\i(N_k),k} \overline{\lambda}_m(\phi,\i(N_k)) - \sum_{n = N_k}^\infty \mu_\phi(I_{\i(n)}) \overline{\lambda}_m(\phi,\i(n)). \\
& = \lambda_m + r_3(k).
\end{align*}
\end{itemize}
By the definition of $N_k$, observe that
$$0<p_{\i(N_k),k} = \frac{(1+\delta)\lambda_m - \sum\limits_{n = 1}^{N_k-1} \mu_\phi([\i(n)]) \underline{\lambda}_m(\phi_k,\i(n))}{\underline{\lambda}_m(\phi_k,\i(N_k))} \leq \mu_\phi([\i(N_k)]).$$
Moreover,
$$p_{\i(1),k} = \mu_\phi([{\i(1)}]) + t_k - p_{\i(N_k),k},$$
where we have defined $t_k$ to be the tail of the distribution $\mu_\phi$, that is
$$t_k := 1-\sum_{n = 1}^{N_k-1} \mu_\phi([\i(n)]).$$
Since $N_k \to \infty$ and so $\mu_\phi([\i(N_k)]) \to 0$, we have that as $p_{\i(N_k),k}\leq \mu_\phi([\i(N_k)])$, both
$$p_{\i(N_k),k} \to 0 \quad \text{and} \quad t_k \to 0$$
as $k \to \infty$. Furthermore, by Lemma \ref{lma:approximation}(2) there exists $C_0 > 0$ such that for each $n \in \N$ we have
$$\limsup_{k \to \infty}|\underline{\lambda}_m(\phi_k,\i(n)) - \underline{\lambda}_m(\phi,\i(n))| \leq C_0$$
and $\underline{\lambda}_m(\phi,\i(n)) < \infty$ for all $n$. Therefore
$$r_1(k),r_2(k),r_3(k) \to 0,\ \text{as } \quad k \to \infty,$$
and so the lemma is proved.

\end{proof}

Recall that $r_1(k),r_2(k),r_3(k) \to 0$ and they implicitly depend on $m$, but the convergence to zero will happen for any fixed $m \in \N$. Fix $m \in \N$ and choose $K(m) \in \N$ such that for any $k \geq K(m)$ we have
$$ |r_1(k)|,|r_2(k)|,|r_3(k)| \leq \min\{C, \delta h(\mu_{\phi},\sigma)\},$$
and
$$|r_1(k)-(1+\delta)r_3(k) | \leq \delta,$$
where $C$ was defined in \eqref{Cdef}.

We are now in a position to prove Lemma \ref{lma:step1}.
\begin{proof}[Proof of Lemma \ref{lma:step1}]
Fix $\delta \in (0, 1/3)$, $m \geq M(\delta)$ and $k\geq k(m)$. We first suppose that we are in the first case of Lemma \ref{lma:step1bernoulli}. In this case we can fix $\mu_k^m :=\mu_{\phi}$ which will be $\sigma^k$-ergodic since it is Gibbs for $\sigma$. We have that
$$-\int S_m\phi_k\, d\mu_{\phi}>-\int S_m\phi\, d\mu_{\phi}$$
and
$$\frac{h(\mu_{\phi},\sigma^k)}{-\int S_m\phi_{k}\d\mu_{\phi}}\geq\frac{1}{1+2\delta}\geq\frac{1-3\delta}{1+3\delta}.$$

If we are in the second case of Lemma \ref{lma:step1bernoulli}, we let $\mu_k^m$ be the $\sigma^m$ Bernoulli measure defined by the weights $(p_{\i,k})_{\i \in \N^m}$ from Lemma \ref{lma:step1bernoulli}. By the properties (i) and (iii) in Lemma \ref{lma:step1bernoulli} and the assumption \ref{M4} on $M(\delta)$, we have that
\begin{eqnarray*}
-\int S_m\phi_k\d\mu_k^m&\geq&\sum\limits_{\i \in \N^m} p_{\i,k}\underline{\lambda}_m(\phi_k,\i(n))\\
&=&(1+\delta)\lambda_m+r_1(k)\\
&\geq&(1+\delta)\left(\sum\limits_{\i \in \N^m} p_{\i,k}\overline{\lambda}_m(\phi,\i(n))\right) +r_1(k)-(1+\delta)r_3(k) \\
&\geq&(1+\delta)\left(\sum\limits_{\i \in \N^m} p_{\i,k}\overline{\lambda}_m(\phi,\i(n))\right)-\delta\\
&\geq&-(1+\delta)\int S_m\phi\d\mu_k^m-\delta\\
&>&-\int S_m\phi\d\mu_k^m.
\end{eqnarray*}
For the dimension we need an estimate in the opposite direction. By  property \ref{M3} of the choice of $M(\delta)$ we have
\begin{eqnarray*}
-\int S_m\phi_k\d\mu_k^m&\leq&\sum\limits_{\i \in \N^m} p_{\i,k}\underline{\lambda}_m(\phi_k,\i(n))+C\\
&=&(1+\delta)\lambda_m+r_1(k)+C\\
&\leq&(1+3\delta)(mh(\mu_{\phi},\sigma)).
\end{eqnarray*}
We also need an estimate on the entropy. Using property \ref{M3} of the choice of $M(\delta)$ once again, we have that
\begin{eqnarray*}
h(\mu_k^m,\sigma^m)&=&-\sum\limits_{\i \in \N^m}p_{\i,k}\log p_{\i,k}\\
&=&-\sum\limits_{\i \in \N^m}\mu_{\phi}([\i])\log\mu_{\phi}([\i])-r_3(k)\\
&\geq& mh(\mu_{\phi},\sigma)(1-2\delta).
\end{eqnarray*}
Putting these two estimates together, we obtain
$$\frac{h(\mu_k^m,\sigma^m)}{-\int S_m\phi_k\d\mu_k^m}\geq\frac{1-3\delta}{1+3\delta}.$$
Thus the proof is complete.

 \end{proof}

Now let us proceed with the proof of Theorem \ref{mainthm}. Let $\delta > 0$ and fix $m \geq M(\delta)$ (recall the choice of $M(\delta)$ from Lemma \ref{lma:step1}) and write
$$\psi_k := \tfrac{1}{m} S_m \phi_k \quad \text{and}\quad \psi := \tfrac{1}{m} S_m \phi$$
and define the auxiliary $\sigma$ invariant measure
$$\eta := \frac{1}{m}\sum_{i=0}^{m-1} \sigma^i \mu_k^m,$$
where $\mu_k^m$ is the $\sigma^m$ Bernoulli measure determined in Lemma \ref{lma:step1}. The measure $\eta$ satisfies the following properties:
$$\int\phi_k\d\eta=\int \psi_k \d\mu_k^m, \quad \int\phi\d\eta = \int \psi \d\mu_k^m \quad \text{and} \quad h(\eta,\sigma)=\frac{1}{m}h(\mu_k^m,\sigma)$$
and the dimension
$$s_k := \Hd\pi_k\eta =\frac{h(\eta,\sigma)}{\int \phi_k\d\eta} = \Hd\pi_{k}\mu_{k}^m.$$
Lemma \ref{lma:step1} will allow us to deduce the following lower bound on the pressure function
$$q\to P(q(\phi_k-\phi)-t\phi_k)$$
with a suitable choice of $t$.

\begin{lem}
\label{lem:pressure}
 If  $0 < t < s_k$, then
\[
\inf_{q\in \R} P(q(\phi_k-\phi)+t\phi_k) >0.
\]
\end{lem}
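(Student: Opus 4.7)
The natural route is to apply the variational principle (Lemma \ref{lma:varprinciple}) with the measure $\eta$ constructed just after Lemma \ref{lma:step1}. Since $\mu_k^m$ is a $\sigma^m$-Bernoulli measure with only finitely many positive weights, the time-averaged measure $\eta = \tfrac{1}{m}\sum_{i=0}^{m-1}\sigma^i\mu_k^m$ is $\sigma$-invariant with support in a finite sub-alphabet; in particular the integrals $\int\phi_k\,d\eta$ and $\int\phi\,d\eta$ are finite. The variational principle then yields, for each $q\in\R$,
\[
P\!\bigl(q(\phi_k-\phi)+t\phi_k\bigr)\;\geq\;h(\eta,\sigma)+q\int(\phi_k-\phi)\,d\eta+t\int\phi_k\,d\eta.
\]
Using the identity $s_k = h(\eta,\sigma)/\bigl(-\int\phi_k\,d\eta\bigr)$ coming from Proposition \ref{prop:measdim}, this rearranges to
\[
(s_k-t)\Bigl(-\int\phi_k\,d\eta\Bigr)\;+\;q\int(\phi_k-\phi)\,d\eta,
\]
whose constant term is strictly positive because $t<s_k$ and the Lyapunov exponent $-\int\phi_k\,d\eta$ of $\eta$ under $T_k$ is positive.

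To promote this into a uniform positive lower bound for every $q\in\R$, I would exploit convexity of the function $F(q):=P(q(\phi_k-\phi)+t\phi_k)$. The plan is to argue that $F$ is a convex function tending to $+\infty$ as $|q|\to\infty$, so that its infimum is realised at some interior $q^*\in\R$. Finite-alphabet truncation via Theorem \ref{finiteapprox} together with Lemma \ref{lem:thermoderivative} then identifies the critical condition $F'(q^*)=0$ with an equilibrium state $\mu_{q^*}$ satisfying $\int(\phi_k-\phi)\,d\mu_{q^*}=0$, whereupon
\[
F(q^*)\;=\;h(\mu_{q^*},\sigma)+t\int\phi_k\,d\mu_{q^*}.
\]
Positivity of $F(q^*)$ is equivalent to $\Hd \pi_k\mu_{q^*}>t$ by Proposition \ref{prop:measdim}, and I would argue this via the variational maximality of the equilibrium state combined with the entropy-vs-Lyapunov comparison already established for $\eta$ through the hypothesis $s_k>t$.

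The main obstacle is that the variational bound from $\eta$ alone is only linear in $q$ with a fixed sign of slope, since Lemma \ref{lma:step1} delivers the strict inequality $\int(\phi_k-\phi)\,d\eta<0$; hence this single bound can cover positivity only on a half-line of $q$. The complementary regime requires non-triviality of $\phi_k-\phi$ on invariant measures other than $\eta$, i.e., the existence of a second invariant measure yielding an opposite-sign slope, so that by convexity the infimum is forced into the interior. This non-degeneracy is precisely where the assumptions of Theorem \ref{mainthm} enter: pointwise convergence of the inverse branches, combined with the tail condition (1) and the uniform variations bound (2), guarantees that the pressure function is coercive in $q$, that the infimum is genuinely attained in the interior, and that the resulting equilibrium measure has enough entropy to keep $F(q^*)$ strictly positive.
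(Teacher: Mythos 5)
Your opening step is the same as the paper's for one regime of $q$: applying the variational principle with the measure $\eta$ gives the linear lower bound
\[
P\bigl(q(\phi_k-\phi)+t\phi_k\bigr)\;\geq\;h(\eta,\sigma)+q\int(\phi_k-\phi)\,d\eta+t\int\phi_k\,d\eta,
\]
whose constant term $(s_k-t)\bigl(-\int\phi_k\,d\eta\bigr)$ is positive, and whose slope $\int(\phi_k-\phi)\,d\eta$ is strictly negative by Lemma \ref{lma:step1}. This proves the claim for $q\leq 0$, exactly as in the paper. You correctly diagnose that this single test measure cannot cover $q>0$. The gap is that you then wave at a convexity/coercivity argument without actually producing the second test measure, and that plan has two concrete problems. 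First, coercivity of $q\mapsto P(q(\phi_k-\phi)+t\phi_k)$ on the \emph{infinite} alphabet is not available through Lemma \ref{lem:thermoderivative} (which is stated for $\Sigma_N$), and passing to $P_N$ via Theorem \ref{finiteapprox} does not hand you differentiability or attainment of the infimum for $P$ itself; the paper only invokes the critical-point/equilibrium-state machinery after moving to a finite sub-alphabet in Lemmas \ref{lem:pressurefinite} and \ref{step2}, and it needs the conclusion of the present lemma to justify that passage, so your route risks circularity. Second, even granting an interior minimiser $q^*$ with $\int(\phi_k-\phi)\,d\mu_{q^*}=0$, the claim that $h(\mu_{q^*},\sigma)+t\int\phi_k\,d\mu_{q^*}>0$ cannot be deduced from the dimension of $\eta$: if $q^*>0$, plugging $\eta$ into the variational principle at $q^*$ gives a term $q^*\int(\phi_k-\phi)\,d\eta<0$ which can overwhelm the constant.

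What the paper actually does for $q>0$ is produce a \emph{second} auxiliary measure with the opposite sign of $\int(\phi_k-\phi)$, split into two cases. If $\phi_k$ admits an equilibrium state $\nu_k$, then $\nu_k\neq\mu_\phi$ forces $\int(\phi_k-\phi)\,d\nu_k>0$, and $P(\phi_k)=0$ together with $t<1$ gives $h(\nu_k,\sigma)+t\int\phi_k\,d\nu_k=(t-1)\int\phi_k\,d\nu_k>0$, so the variational principle with $\nu_k$ handles all $q>0$. If $\phi_k$ has no equilibrium state, Lemma \ref{lem:noequilibrium} forces $\inf\{s:P(s\phi_k)=\infty\}=1$, while the tail hypothesis (1) of Theorem \ref{mainthm} gives $\inf\{s:P(s\phi)=\infty\}<1$; choosing $s$ between these and $t$, and applying Lemma \ref{cusp}(1) to $\phi_k$ and Lemma \ref{cusp}(2) to $\phi$, one obtains a compactly supported ergodic $\mu$ with $h(\mu,\sigma)+s\int\phi_k\,d\mu\geq 0$ and $h(\mu,\sigma)+s\int\phi\,d\mu\leq 0$, hence $\int(\phi_k-\phi)\,d\mu\geq 0$ and $h(\mu,\sigma)+t\int\phi_k\,d\mu>0$. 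This explicit construction is what your proposal is missing: the existence of the opposite-sign test measure is not a formal consequence of the hypotheses but a genuine dichotomy whose non-equilibrium branch is precisely where the tail condition (1) and Lemma \ref{cusp} do the work.
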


\begin{proof}
By Lemma \ref{lma:step1}, we have
$$-\int \phi_k \d\eta>-\int \phi \d\eta.$$
Thus we have that for all $q\leq 0$ the following property
$$\int [q(\phi_k-\phi)+t\phi_k]\d\eta+h(\eta,\sigma)>t\int\phi_k\d\eta+h(\eta,\sigma)>0.$$
On the other hand, if $q>0$ we first suppose that the potential $\phi_k$ has an equilibrium state $\nu_k$. In this case as $t < s_k \leq 1$ and $\int \phi_k - \phi \, d\nu_k > 0$ we have
$$\int  q(\phi_k-\phi)+t\phi_k\d\nu_{k}+h(\eta,\sigma)>t\int\phi_k\d\nu_{k}+h(\nu_{k},\sigma)>0.$$
Thus by the variational principle,
$$P(q(\phi_k-\phi)+t\phi_k)>\max\Big\{t\int\phi_k\d\eta+h(\eta,\sigma),t\int\phi_k\d\nu_{k}+h(\nu_{k},\sigma)\Big\}>0.$$
If $\phi_k$ does not have an equilibrium state then we must have that
$$\inf\{s:P(s\phi_k)=\infty\}=1$$
and by assumption
$$0\leq\inf\{s:P(s\phi_k)=\infty\}<1.$$
Therefore,   if we let $1>s>\max\{\inf\{s:P(s\phi)=\infty\},t\}$ and apply the first part of Lemma \ref{cusp} to $\phi_k$ and the second part to $\phi$,  we can find a compactly supported $\sigma$ invariant ergodic measure $\mu$ such that
$$h(\mu,\sigma)+s\int\phi_k\d\mu\geq 0$$
and
$$h(\mu,\sigma)+s\int\phi\d\mu\leq 0.$$
Therefore $\int\phi_k\d\mu\leq\int\phi\d\mu$ and so for all $q\leq 0$
$$\int  q(\phi_k-\phi)+t\phi_k\d\mu+h(\mu,\sigma)>0.$$
\end{proof}

We can now use the approximation property of pressure to allow us to find suitable measures which are compactly supported. Recall that the finite approximation property was given in Lemma \ref{finiteapprox}, and it states that $P(\phi) = \lim_{N\to \infty} P_N(\phi)$, where $P_N(\phi)$ is the pressure of $\phi$ restricted to the finite shift $\{1,2,\dots,N\}^\N$.

\begin{lem}
\label{lem:pressurefinite}
If  $0 < t < s_k$, then then there exists $N \in \N$ with
$$\inf\{P_N(q(\phi_k-\phi)-t\phi_k):q\in\R\}>0$$
and
$$\lim_{q\to\infty} P_N(q(\phi_k-\phi)-t\phi_k)=\lim_{q\to-\infty} P_N(q(\phi_k-\phi)-t\phi_k)=\infty.$$
\end{lem}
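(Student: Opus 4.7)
Write $\psi := \phi_k-\phi$ and $F_q := q\psi + t\phi_k$. The plan is to bootstrap the infinite-shift bound of Lemma \ref{lem:pressure} to its finite-alphabet counterpart, combining Theorem \ref{finiteapprox} with the convexity of $q\mapsto P(F_q)$ and a couple of explicit compactly supported test measures. Set $\eps_0 := \inf_{q\in\R} P(F_q)$, which is strictly positive by Lemma \ref{lem:pressure}.

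\emph{Step 1 (compactly supported witnesses of both signs).} First I would produce two compactly supported $\sigma$-invariant probability measures $\eta_+,\eta_-$ with $\int\psi\d\eta_+>0$ and $\int\psi\d\eta_-<0$. For $\eta_+$ one can take $\eta:=\frac{1}{m}\sum_{i=0}^{m-1}\sigma^i\mu_k^m$ from Lemma \ref{lma:step1}: the Bernoulli measure $\mu_k^m$ has finite support (only $N_k$ of the weights $p_{\i,k}$ are nonzero), and Lemma \ref{lma:step1} gives $\int(\phi_k-\phi)\d\eta>0$. For $\eta_-$, observe that in both alternatives of Lemma \ref{lma:step1bernoulli} one has $-\int\phi_k\d\mu_\phi>-\int\phi\d\mu_\phi$, hence $\int\psi\d\mu_\phi<0$. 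Since $\mu_\phi$ is not compactly supported, I would close up a typical orbit: by Birkhoff's theorem choose a $\mu_\phi$-generic point $x$ with $\frac{1}{n}S_n\psi(x)<0$ for some large $n$, and let $\hat y$ denote the periodic point with coding $(x_1,\dots,x_n)^\infty$. Summable variations of $\psi$ (assumption (2) of Theorem \ref{mainthm}) ensure that $\frac{1}{n}S_n\psi(\hat y)$ differs from $\frac{1}{n}S_n\psi(x)$ by a uniformly bounded error, hence is still negative for $n$ large, so the $\sigma$-invariant measure carried by the orbit of $\hat y$ has finite support and integrates $\psi$ negatively.

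\emph{Step 2 (infinite limits, uniform in $N$).} Fix $N_0\in\N$ so that $\supp(\eta_\pm)\subset\Sigma_{N_0}$. For every $N\geq N_0$ the variational principle on the finite shift $\Sigma_N$ yields
$$P_N(F_q)\geq h(\eta_\pm,\sigma)+q\int\psi\d\eta_\pm+t\int\phi_k\d\eta_\pm.$$
Choosing $\eta_+$ when $q\to+\infty$ and $\eta_-$ when $q\to-\infty$ gives both asserted limits $P_N(F_q)\to+\infty$ for every fixed $N\geq N_0$, and moreover produces a threshold $Q>0$, depending only on $\eta_\pm$ and $\eps_0$ and hence \emph{independent} of $N\geq N_0$, such that $P_N(F_q)\geq\eps_0$ whenever $|q|\geq Q$.

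\emph{Step 3 (positive infimum on $[-Q,Q]$).} Both $q\mapsto P_N(F_q)$ and $q\mapsto P(F_q)$ are convex, and Theorem \ref{finiteapprox} gives pointwise convergence $P_N(F_q)\to P(F_q)$. Pointwise convergence of convex functions upgrades to uniform convergence on compact subsets, so I may fix $N\geq N_0$ large enough that $\sup_{q\in[-Q,Q]}|P_N(F_q)-P(F_q)|<\eps_0/2$. Since $P(F_q)\geq\eps_0$ everywhere, this gives $P_N(F_q)\geq\eps_0/2$ on $[-Q,Q]$, and combined with Step 2 it delivers $\inf_{q\in\R}P_N(F_q)\geq\eps_0/2>0$, together with the blow-up at $\pm\infty$ already established.

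The delicate step is Step 1, specifically the construction of a compactly supported $\eta_-$: the natural candidate $\mu_\phi$ is Gibbs on the full countable shift and not supported on any finite alphabet, and the periodic-orbit closing is what makes the summable-variation hypothesis of Theorem \ref{mainthm} genuinely used at this point. Once the two test measures are in hand, the rest is a clean combination of the variational principle, convexity, and the finite approximation property.
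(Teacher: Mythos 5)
There is a genuine gap in Step 1, and it is a sign error that leaves one of the two directions $q\to\pm\infty$ unprotected. You claim that Lemma~\ref{lma:step1} gives $\int(\phi_k-\phi)\,d\eta>0$. But Lemma~\ref{lma:step1} says $-\int S_m\phi_k\,d\mu_k^m > -\int S_m\phi\,d\mu_k^m$, which rearranges to $\int S_m(\phi_k-\phi)\,d\mu_k^m<0$, hence $\int(\phi_k-\phi)\,d\eta = \tfrac{1}{m}\int S_m(\phi_k-\phi)\,d\mu_k^m<0$. (This is also exactly the sign used in the proof of Lemma~\ref{lem:pressure}: "$-\int\phi_k\,d\eta>-\int\phi\,d\eta$'', so $\eta$ witnesses positivity of $P$ for $q\le 0$.) Your second candidate, the periodic-orbit closure of a $\mu_\phi$-generic point, also has $\int(\phi_k-\phi)<0$. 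So both of your test measures have the \emph{same} sign of $\int(\phi_k-\phi)$, and in Step 2 you have no measure to force $P_N(F_q)\to\infty$ as $q\to+\infty$. The measure with $\int(\phi_k-\phi)>0$ is the nontrivial ingredient here: the paper obtains it from the equilibrium state $\nu_k$ for $\phi_k$ (using $P(\phi_k)=0$ and uniqueness of equilibrium states to deduce $\int(\phi_k-\phi)\,d\nu_k>0$), and in the case that $\phi_k$ admits no equilibrium state it invokes Lemma~\ref{cusp} to manufacture a compactly supported ergodic $\mu$ with $h(\mu)+s\int\phi_k\,d\mu\ge 0$ and $h(\mu)+s\int\phi\,d\mu< 0$, hence $\int(\phi_k-\phi)\,d\mu>0$. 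Your proposal replaces neither of these; without one of them the argument does not close.

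Two smaller points. First, $\eta$ from Lemma~\ref{lma:step1} is compactly supported only in case (2) of Lemma~\ref{lma:step1bernoulli}; in case (1) one has $\mu_k^m=\mu_\phi$, which is fully supported, so there too you would have to close an orbit exactly as you do for the other measure — the assertion that $\eta$ is already finitely supported is not always correct. Second, your Step 3 replaces the paper's nested-compact-set argument (define $Q_N:=\{q\in[2q_1,2q_2]:P_N(F_q)\le 0\}$, observe $Q_{N+1}\subseteq Q_N$ compact with $\bigcap_N Q_N=\varnothing$, conclude some $Q_N=\varnothing$) with uniform convergence of convex functions. That is a legitimate alternative, and arguably cleaner, but note it requires $P(F_q)$ to be finite on $[-Q,Q]$; the paper's version has the small advantage of only ever invoking continuity of the finite-alphabet pressures $P_N$, so it is insensitive to whether $P(F_q)$ is finite. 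This is patchable (a point $q_0$ with $P(F_{q_0})=\infty$ trivially has $P_N(F_{q_0})$ eventually large, and convexity propagates this), but worth flagging. The decisive defect remains the missing measure of positive $\int(\phi_k-\phi)$.
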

\begin{proof}
First of all by taking $\nu_k$ as in the proof of previous Lemma \ref{lem:pressure} we have
$$\int (\phi_k-\phi) \d\nu_k < 0 \quad \text{and} \quad \int (\phi_k-\phi) \d\eta > 0.$$
Let us use these measures $\eta$ and $\nu_k$ to construct measures $\tau_1$ and $\tau_2$ satisfying similar properties but supported on a compact set $\Sigma_N$ for a large enough $N$ as follows. By Birkhoff's ergodic theorem there exist words $\i,\j\in\Sigma$ and indices $n_1,n_2\in\N$ such that
$$\sigma^{n_1}\i=\i, \quad \sigma^{n_2}\j=\j, \quad S_{n_1}(\phi_k-\phi)(\i)>0, \quad \text{and}\quad  S_{n_2}(\phi_k-\phi)(\j)<0.$$
Thus if we let $\tau_1$ and $\tau_2$ be the measures supported on these $n_1$ and $n_2$ periodic orbits of $\i$ and $\j$ respectively, then there exists an index $M\in\N$ such that both $\tau_1,\tau_2$ are invariant measures on $\Sigma_N$ for all $N\geq M$ and we will have that
$$\int (\phi_k-\phi)\d\tau_1<0 \quad \text{and} \quad \int (\phi_k-\phi)\d\tau_2>0.$$
Thus if $N\geq M$ and we put
$$q_1 := \frac{t\int \phi_k\d\tau_1}{\int (\phi_k-\phi)\d\tau_1} \quad \text{and} \quad q_2 := \frac{t\int \phi_k\d\tau_2}{\int (\phi_k-\phi)\d\tau_2},$$
then by the variational principle there exists $C>0$ such that $P_N(q(\phi_k-\phi)-t\phi_k)>C$ for all $q\notin [2q_1,2q_2]$ and
$$\lim_{q\to\infty} P_N(q(\phi_k-\phi)-t\phi_k)=\lim_{q\to-\infty} P_N(q(\phi_k-\phi)-t\phi_k)=\infty.$$
On the other hand, by the finite approximation property (Lemma \ref{finiteapprox}) and Lemma \ref{lem:pressure} we have that
$$\lim_{n\to \infty} P_N(q(\phi_k-\phi)-t\phi_k)=P(q(\phi_k-\phi)-t\phi_k)\geq\inf_{q \in \R}P(q(\phi_k-\phi)-t\phi_k)>0$$
for all $q\in [2q_1,2q_2]$. Now if for each $n\in\N$ we define the set
$$Q_N := \{q\in [2q_1,2q_2]:P_N(q(\phi_k-\phi)-t\phi_k)\leq 0\},$$
then $Q_{N+1}\subset Q_N$ for for all $n\in\N$. However, if we can find $q\in \bigcap_{N=1}^{\infty}Q_N$, then $P(q(\phi_k-\phi)-t\phi_k)\leq 0$, which is a contradiction. Thus $\bigcap_{N=1}^{\infty}Q_N=\emptyset$ and since each $Q_N$ is compact there must exists $N\geq M$ such that $Q_N=\emptyset$. For this value of $N\in\N$ we must have that
$$\inf\{P_N(q(\phi_k-\phi)-t\phi_k):q\in\R\}>0$$
as claimed.
\end{proof}

Now for the $N \in \N$ constructed in Lemma \ref{lem:pressurefinite}, we can formulate a key lemma:

\begin{lem}\label{step2}
If  $0 < t< s_k$, then there exists $N\in\N$ such that
\begin{enumerate}
\item $\phi_k-\phi$ is not a coboundary on $\Sigma_N$.
\item there exists a Gibbs measure $\mu$ on $\Sigma_N$ such that
$$\int \phi_k \d\mu=\int \phi\d\mu \quad \text{and} \quad \frac{h(\mu,\sigma)}{-\int \phi_k\d\mu}>t.$$
\end{enumerate}
\end{lem}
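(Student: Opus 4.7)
The plan is to work on the finite shift $\Sigma_N$ provided by Lemma \ref{lem:pressurefinite} and to extract a Gibbs measure at the critical point of the pressure function in the parameter $q$. Let $N\in\N$ be the integer from Lemma \ref{lem:pressurefinite} and set
\[
F(q) := P_N\bigl(q(\phi_k-\phi)+t\phi_k\bigr), \qquad q\in\R,
\]
which (in the sign convention of Lemma \ref{lem:pressure}) is strictly positive on $\R$ and satisfies $F(q)\to+\infty$ as $|q|\to\infty$. Since $F$ is real-analytic in $q$ on the finite alphabet and proper, it attains its infimum at some $q_0\in\R$.

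For part (1), I argue by contradiction: if $\phi_k-\phi$ were a coboundary on $\Sigma_N$, then for every $q$ the potentials $q(\phi_k-\phi)+t\phi_k$ and $t\phi_k$ would differ by a coboundary, so their pressures would coincide. Hence $F$ would be constant on $\R$, contradicting $F(q)\to\infty$.

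For part (2), let $\mu$ be the unique Gibbs and equilibrium measure on $\Sigma_N$ for the H\"older potential $q_0(\phi_k-\phi)+t\phi_k$, which exists by the standard finite-alphabet theory. Because $q_0$ is a critical point of $F$, Lemma \ref{lem:thermoderivative} gives
\[
0 \;=\; F'(q_0) \;=\; \int(\phi_k-\phi)\d\mu,
\]
which is the first identity in~(2). Combining this with the variational principle (Lemma \ref{lma:varprinciple}) yields
\[
F(q_0) \;=\; h(\mu,\sigma) + q_0\int(\phi_k-\phi)\d\mu + t\int\phi_k\d\mu \;=\; h(\mu,\sigma) + t\int\phi_k\d\mu,
\]
and since $F(q_0)>0$ while $-\int\phi_k\d\mu>0$ (as $\phi_k=-\log|T_k'|<0$), rearranging produces $h(\mu,\sigma)/(-\int\phi_k\d\mu)>t$, as required.

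No serious obstacle arises: once Lemma \ref{lem:pressurefinite} is in hand, the whole argument reduces to picking the minimiser $q_0$ of $F$ and matching the resulting positivity $F(q_0)>0$ against $h(\mu,\sigma)+t\int\phi_k\d\mu$ via the derivative formula; existence of the minimiser is immediate from continuity and properness of $F$, and everything else is routine finite-alphabet thermodynamic formalism.
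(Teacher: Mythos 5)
Your proposal is correct and follows essentially the same route as the paper's own proof: take the $N$ from Lemma \ref{lem:pressurefinite}, consider the analytic pressure function in $q$, find a critical point $q_0$ from the fact that the pressure is proper and bounded below, apply Lemma \ref{lem:thermoderivative} to get $\int(\phi_k-\phi)\,d\mu=0$, and use positivity of the pressure together with the variational principle to conclude $h(\mu,\sigma)/(-\int\phi_k\,d\mu)>t$; the non-coboundary claim follows exactly as you argue, since a coboundary would make the pressure constant in $q$. (You also use the correct sign $+t\phi_k$, consistent with Lemma \ref{lem:pressure}; the $-t\phi_k$ appearing in the paper's statement of Lemma \ref{lem:pressurefinite} and in its proof of this lemma is a typographical slip.)
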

\begin{proof}
By Lemma \ref{lem:pressurefinite} we know that there exists $N\in\N$ such that
$$\inf\{P_N(q(\phi_k-\phi)-t\phi_k):q\in\R\}>0$$
and
$$\lim_{q\to\infty} P_N(q(\phi_k-\phi)-t\phi_k)=\lim_{q\to-\infty} P_N(q(\phi_k-\phi)-t\phi_k)=\infty.$$
The restrictions of $\phi_k$ and $\phi$ to $\Sigma_N$ are H\"{o}lder continuous and so the function $Z_N:\R\to\R$ defined by
$$Z_N(q):=P_N(q(\phi_k-\phi)-t\phi_k)$$
is analytic with
$$Z_N'(q)=\int (\phi_k-\phi)\d\mu_q$$
by Lemma \ref{lem:thermoderivative}, where $\mu_q$ is the Gibbs measure on $\Sigma_N$ for $q(\phi_k-\phi)+t\phi_k$.

Since $\lim_{q\to\infty} Z_N(q)=\lim_{q\to-\infty} Z_N(q)=\infty$ we know by the definition of pressure that $\phi_k-\phi$ cannot be a coboundary on $\Sigma_N$. Therefore, as $\inf\{Z_N(q):q\in\R\}>0$, there must exist $q_1 \in \R$ such that $Z_N'(q_1)=0$. Thus the Gibbs measure $\mu := \mu_{q_1}$ on $\Sigma_N$ satisfies
$$\int (\phi_k-\phi)\d\mu=0$$
and by the variational principle (since $Z_N(q_1) > 0$) we have
$$h(\mu,\sigma)+t\int \phi_k\d\mu>0.$$
Therefore, we have by the negativity of $\phi_k$ that
$$\frac{h(\mu,\sigma)}{-\int \phi_k\d\mu}>t$$
as claimed.
\end{proof}

The key to the proof of the main theorem will be to combine the above result with the following simple application of the law of the iterated logarithm for function differences $f-g$, which are not coboundaries.
\begin{lem}\label{lil}
Let $f,g:\Sigma_N\to\R$ be H\"{o}lder continuous potentials such that $f-g$ is not a coboundary and let $\mu$ be a Gibbs measure on $\Sigma_N$ where $\int f\d\mu=\int g\d\mu$. We then have that
$$\liminf_{n\to\infty}e^{S_n(f-g)(x)}=0\quad \text{ and } \quad \limsup_{n\to\infty}e^{S_n(f-g)(x)}=\infty$$
for $\mu$ almost all $x\in\Sigma_N$.
\end{lem}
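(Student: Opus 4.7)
The plan is to reduce the statement directly to the law of the iterated logarithm (Lemma \ref{lem:lawofiterated}) applied to the potential $\psi := f - g$, together with its negation $-\psi$.

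First I would observe that $\int \psi \, d\mu = 0$ by hypothesis, and verify that $\psi$ is not cohomologous to a constant. Indeed, if $\psi - c = u - u\circ\sigma$ for some constant $c \in \R$ and continuous $u:\Sigma_N \to \R$, then integrating both sides against the $\sigma$-invariant measure $\mu$ yields $-c = \int \psi \, d\mu = 0$, so $c = 0$ and $\psi$ would be a coboundary, contradicting the hypothesis.

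Next I would note that since $\mu$ is a Gibbs measure on the finite shift $\Sigma_N$, it is the unique equilibrium state for some H\"older potential (for instance $\log$ of its transition weights), so Lemma \ref{lem:lawofiterated} applies with $\mu_\phi = \mu$. Since $\psi$ is not cohomologous to a constant, the lemma yields a variance $c(\psi) > 0$ such that
\[
\limsup_{n\to\infty}\frac{S_n\psi(x)}{\sqrt{n\log\log n}} = c(\psi) > 0
\]
for $\mu$-almost every $x$. Together with $\int \psi\, d\mu = 0$ this forces $\limsup_n S_n\psi(x) = +\infty$, hence $\limsup_n e^{S_n(f-g)(x)} = \infty$ almost surely.

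For the liminf statement I would apply exactly the same argument to $-\psi$: it is H\"older, integrates to zero, and is also not cohomologous to a constant (any such cohomology would transfer to $\psi$). Lemma \ref{lem:lawofiterated} then produces $\limsup_n (-S_n\psi(x))/\sqrt{n\log\log n} = c(-\psi) > 0$ almost surely, i.e. $\liminf_n S_n\psi(x) = -\infty$, and exponentiating gives $\liminf_n e^{S_n(f-g)(x)} = 0$. There is no real obstacle here; the only point requiring attention is the distinction between \emph{coboundary} and \emph{cohomologous to a constant}, which is handled by the integration argument in the first paragraph.
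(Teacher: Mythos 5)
Your proof is correct and follows the same route as the paper: apply the law of the iterated logarithm (Lemma~\ref{lem:lawofiterated}) to $f-g$ and to its negation, then exponentiate. You are in fact a bit more careful than the paper's own proof in spelling out why the hypotheses (``$f-g$ is not a coboundary'' together with $\int f\,d\mu=\int g\,d\mu$) imply that $f-g$ is not cohomologous to a constant, which is the condition Lemma~\ref{lem:lawofiterated} actually requires.
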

\begin{proof}
Since $f-g$ is not cohomologous to a constant we can apply the law of the iterated logarithm, Lemma \ref{lem:lawofiterated}, to the functions $f-g$ and $g-f$ to conclude that for some positive constants $c_1,c_2 > 0$ the following asymptotic bounds hold:
$$\liminf_{n \to \infty} \frac{S_n(f-g)(x)}{\sqrt{n \log\log n}} < -c_2  \quad \text{ and } \quad \limsup_{n \to \infty} \frac{S_n(f-g)(x)}{\sqrt{n \log\log n}} > c_1$$
at $\mu$ almost every $x\in\Sigma_N$. In particular at these $x$ also
$$\liminf_{n\to\infty}e^{S_n(f-g)(x)}=0 \quad \text{ and } \quad \limsup_{n\to\infty}e^{S_n(f-g)(x)}=\infty.$$
\end{proof}

Let us now complete the proof of the main theorem.

\begin{proof}[Proof of Theorem \ref{mainthm}] For any $0 < \delta < 1/3$ and $m \geq M(\delta)$ by Lemma \ref{lma:step1}, we can find $K = K(m)\in\N$ such that for all $k\in\N$ with $k\geq K$ there exists a $\sigma^m$-invariant ergodic measure $\mu_k^m$ on $\Sigma$ such that
$$\int (\psi_k-\psi)\d\mu_k^m>0 \quad \text{and} \quad \frac{\frac{1}{m}h(\mu^m_k)}{-\int\psi_k\d\mu_k^m} = \dim \pi_{k}\mu_k^m > \frac{1-3\delta}{1+3\delta}.$$
Thus by Lemma \ref{step2} applied to $t = (1-2\delta)/(1+2\delta)$ and for  the $N \in \N$ given by that result, $\phi_k-\phi$ is not a coboundary on $\Sigma_N$ and we can find a Gibbs measure $\mu$ supported on a compact set of $\Sigma$ (i.e. $\Sigma_N$ embedded into $\Sigma$) such that
$$\int \psi_k \d\mu = \int \psi\d\mu \quad \text{and} \quad \dim \mu\circ\pi_{k}  > \frac{1-3\delta}{1+3\delta}.$$
Therefore, by Lemma \ref{lil}, we may also assume that at $\mu$ almost all $x\in\Sigma$ we have
$$\liminf_{n\to\infty}e^{S_n(\psi_k-\psi)(x)}=0\text{ and }\limsup_{n\to\infty}e^{S_n(\psi_k-\psi)(x)}=\infty.$$
Fix one such $x \in \Sigma$. Recall that the projections $\pi_{k},\pi : \Sigma \to [0,1]$ map cylinder sets from  $\Sigma$ onto $T_k$ and $T$ construction intervals respectively and the conjugacy $\theta_k$ between $T_k$ and $T$ satisfies
$$\theta_k(\pi_{k}(x)) = \pi(x).$$
Now for each $n \in \N$, let us define a word $y = y(n) \in \N^{n+1}$ by
$$y := \begin{cases}x|_n3, &\text{ if }x_{n+1} = 1;\\
x|_n4, &\text{ if }x_{n+1} = 2;\\
x|_n 1, &\text{ if } x_{n+1} \geq 3.
\end{cases}$$
Then $\pi_k(y) \in I_{x_1,\dots,x_n}^{(T_k)}$ and so $\theta_{k}(\pi_k(y)) \in I_{x_1,\dots,x_n}^{(T)}$, where we emphasise the interval map $T_k$ or $T$ used. Therefore, for all $n \in \N$ the distances
$$|\pi_{k}(x)-\pi_k(y)|\leq |I_{x_1,\dots,x_n}^{(T_k)}| = e^{S_n\psi_k(x)}$$
and
$$|\theta_k(\pi_{k}(x))-\theta_k(\pi_k(y))|\leq |I_{x_1,\dots,x_n}^{(T)}| = e^{S_n\psi(x)}.$$
Moreover, we have the lower bound
$$|\pi_{k}(x)-\pi_k(y)| \geq \begin{cases} |I_{x_1,\dots,x_n2}^{(T_k)}|, &\text{ if }x_{n+1} = 1;\\
 |I_{x_1,\dots,x_n3}^{(T_k)}|, &\text{ if }x_{n+1} = 2;\\
|I_{x_1,\dots,x_n2}^{(T_k)}|, &\text{ if } x_{n+1} \geq 3.
\end{cases}$$
so in all cases there is $c_k = c_k(x) > 0$ independent of $n$ satisfying
$$|\pi_{k}(x)-\pi_k(y)| \geq c_k e^{S_n\psi_k(x)}.$$
Similarly, for a suitable $c = c(x) > 0$ independent of $n$ the images satisfy
$$|\theta_k(\pi_{k}(x))-\theta_k(\pi_k(y))| \geq c e^{S_n\psi(x)}$$
Thus as the numbers $c_k$ and $c$ are independent of $n$ we obtain by our choice of $x$ that
$$\liminf_{n\to\infty} \frac{|\theta_k(\pi_{k}(x))-\theta_k(\pi_k(y))|}{|\pi_{k}(x)-\pi_k(y)|}\leq \liminf_{n\to\infty} c_k^{-1}e^{S_n(\psi_k-\psi)(x)}=0$$
and
$$\limsup_{n\to\infty} \frac{|\theta_k(\pi_{k}(x))-\theta_k(\pi_k(y))|}{|\pi_{k}(x)-\pi_k(y)|}\geq \limsup_{n\to\infty} ce^{S_n(\psi_k-\psi)(x)}=\infty.$$
Thus the derivative of $\theta_k$ at $\pi_{k}(x)$ cannot exist. Since $x$ was $\mu$ typical, this means that $\mu\circ \pi_{k}$ gives full mass to the set of $y$ where $\theta_k'(y)$ does not exist. Therefore, for all $k \geq K$ we have
$$\Hd \{y\in [0,1]:\theta_k'(y)\text{ does not exist}\} \geq \dim\pi_{k} \mu >  \frac{1-3\delta}{1+3\delta}.$$
The proof of Theorem \ref{mainthm} is therefore complete,  since $1/3>\delta > 0$ was chosen arbitrarily.
\end{proof}

\section{Manneville-Pomeau maps}\label{sec:manpom}

Let us now prove Corollary \ref{cor:mp} to Theorem \ref{mainthm}. Fix $\alpha,\beta > 0$ with $\alpha \neq \beta$ and let $\widehat{M}_\alpha$ and $\widehat{M}_\beta$ be the jump transformations of $M_\alpha$ and $M_\beta$. That is, if $r_\alpha(x) \in \N$ is the first hitting time to the interval between $[b_\alpha,1]$, where $b_\alpha$ is the solution to the equation $x+x^{1+\alpha} = 1$ on $(0,1)$, then
$$\widehat{M}_\alpha(x) := M_\alpha^{r_\alpha(x)}(x)$$
and similarly for $\widehat{M}_\beta$. Now the topological conjugacy $\theta_{\alpha,\beta}$ between $M_\alpha$ and $M_\beta$ agrees with the topological conjugacy between $\widehat{M}_\alpha$ and $\widehat{M}_\beta$. Therefore, in order to prove Corollary \ref{cor:mp}, we need to establish the assumptions on Theorem \ref{mainthm} when $\beta \to \alpha$.

(a) Pointwise convergence of the inverse branches of the induced maps can be established since when $\beta \to \alpha$, we have that $M_\beta(x) \to M_\alpha(x)$ and the hitting times $r_\beta(x) \to r_\alpha(x)$ for a fixed $x \in [0,1]$.

(b) Now for the tail behaviour, that is, condition (1) in Theorem \ref{mainthm}, we will cite Sarig \cite{sarigCMP} and in particular the proof of Proposition 1 there, where it is proved that if $f_i$ are the inverse branches of $\widehat{M}_\alpha$, then for any $0 < \alpha < \infty$ there exists $t(\alpha) > 0$ with
$$\sum_{i = 1}^\infty |f_i[0,1]|^{t(\alpha)} < \infty.$$

(c) Finally, the variations will be uniformly bounded. Fix any $\eps > 0$ such that $\alpha - \eps > 0$. For $\beta > 0$, write
$$\phi_\beta(\i) := -\log |\widehat{M}_\beta'(\pi_{\widehat{M}_\beta}(\i))|,$$
where we  recall that $\pi_{\widehat{M}_\beta}$ maps cylinders $[\i]$ onto intervals $I^{\widehat{M}_\beta}_\i$. Then to check the uniform bound (2) in Theorem \ref{mainthm} on variations, we will need to establish
$$\sup_{\beta \in I(\alpha)} \sum_{n = 1}^\infty \var_n(\phi_\beta) < \infty,$$
where $I(\alpha) := [\alpha-\eps,\alpha+\eps] \subset (0,\infty)$ as this yields the assumption (2) in Theorem \ref{mainthm} for all sequences $\widehat{M}_{\beta_k}$, where $\beta_k \to \alpha$ as $k \to \infty$. To do this, we just need to check that the mapping $\beta \mapsto \sum_{n = 1}^\infty \var_n(\phi_\beta)$ is bounded by a continuous function since the supremum is over a compact interval $I(\alpha)$. This follows from Nakaishi's work \cite[Lemmas 2.1 and 2.2]{nakaishi} where the following estimate can be established:
$$|\phi_\beta(\j) - \phi_\beta(\k)| \leq C(\beta) n^{-p(\beta)}$$
for $\i \in \N^n$ and $\j,\k \in [\i]$ and so $\var_n(\phi_\beta) \leq C(\beta)n^{-p(\beta)}$. Here the constants $C(\beta) > 0$ and $p(\beta) > 1$ depend continously on the parameter $\beta$. Hence $\sum_{n = 1}^\infty \var_n(\phi_\beta) \leq C(\beta) \zeta(p(\beta))$, where $\zeta$ is the Riemann zeta function. Thus the sum is bounded by a continuous function of $\beta$, which is what we wanted.

\section{H\"older exponents, dimension of $\mu \circ \theta_k$ and the entropy} \label{sec:holderentropy}

In this section we will prove Propositions \ref{prop:holder}, \ref{prop:hausdorff} and \ref{prop:lyapunov} by giving examples of countable Markov maps $T_k$ and $T$ satisfying the conditions of Theorem \ref{mainthm} but with, respectively,  the H\"older exponents, Hausdorff dimensions of the push-forward of the invariant measure for $T$  and Lyapunov exponents failing to converge. All of the examples we give below come from the class of $\alpha$-L\"uroth maps, which were introduced in \cite{KMS}, so let us briefly recall the definition. We start with a sequence of real numbers $0 < t_k \leq 1$ with the property that $\lim_{k\to\infty}t_k=0$ and let $\alpha:=\{A_n:=(t_{n+1}, t_n]:n\in\N\}$. We also denote the length of $A_n$ by $a_n:=a_n(\alpha)$. Then the map $\alpha$-L\"uroth map $L_\alpha$ is defined to be the countable Markov map with inverse branches that map the unit interval affinely onto each partition element $A_n$. Two particular examples we will use below come from the partitions $\alpha_L$, defined by $t_n:=1/n$, and $\alpha_D$, which is given by $t_n:=2^{-(n-1)}$.

\subsection{H\"older exponents}\label{sec:holder}

We start with the map $T:=L_{\alpha_D}$ as described above. Then we modify the partition $\alpha_D$ to obtain a sequence of $\alpha$-L\"uroth maps that converge pointwise to $T$, in the following way. Let $\alpha_k$ be the partition where $a_n(\alpha_k)= a_n(\alpha_D)$ for all $n\notin\{k, k+1\}$,  and we modify the point $t_{k+1}(\alpha_D)$ in order to obtain the lengths $a_k(\alpha_k) = 2^{-k^2}$ and $a_{k+1} = 2^{-k}+2^{-(k+1)}-2^{-k^2}$. Then the conjugacy map $\theta_k$ between $T_k$ and $T$ is exactly the map studied in \cite{KMS}, where in particular it was shown in \cite[Lemma 2.3]{KMS} that the H\"older exponent of $\theta_k$ is given by
\[
\kappa(\theta_k) = \inf\left\{\frac{\log a_n(\alpha_D)}{\log a_n(\alpha_k)}:n\in\N\right\}.
\]
Therefore, for our example, we see that the H\"older exponent of $\theta_k$ is given by $1/k$. This proves Proposition \ref{prop:holder}.

\subsection{Hausdorff dimension of $\mu \circ \theta_k$} \label{sec:hausdorff}  In this case we choose $T$ to be the $\alpha_L$-L\"uroth map, so $a_n (\alpha_L)= 1/(n(n+1))$ for all $n\in \N$.  Therefore we  have that the Lyapunov exponent and the entropy
$$\lambda(\mu, T)=h(\mu, T) = \sum_{i = 1}^\infty -a_i \log a_i < +\infty.$$
Now for each $k\in\N$ we make a modification to the partition $\alpha_L$ to obtain a sequence of partitions $\alpha_k$ as follows. Fix the first $k$ elements of the partition, and then for $i>k$ let the partition elements have size
\[
a_i(\alpha_k)= \frac1{(k+1)2^{i-k}}.
\]
Letting $T_k:=L_{\alpha_k}$, and the conjugacy between $T_k$ and $T$ again be denoted by $\theta_k$, the conditions of Theorem \ref{mainthm} are readily seen to hold as $-\log|T_k'|$ is a piecewise constant function and the tail $t_i$ decays exponentially. However, for each $k$ we have that
$$h(\mu\circ \theta_k, T_k) = h(\mu,T)  < +\infty,$$
but the maps $T_k$ are constructed such that
$$\lambda(\mu\circ \theta_k, T_k) = \sum_{i = 1}^\infty -a_i(\alpha_L) \log (a_i(\alpha_k)) = +\infty.$$
An application of Proposition \ref{prop:measdim} now finishes the proof of Proposition \ref{prop:hausdorff}.

\subsection{Entropy}\label{sec:entropyconvergence}

An example of maps where the Lyapunov exponents to fail to converge is made by adapting the tails of the partition $\alpha_D$ again, similarly to the trick for H\"older exponents in Section \ref{sec:holder}. So, let $T:=L_{\alpha_D}$, and recall that this means $a_i = 2^{-i}$ for all $i\in \N$. 
Thus for the entropy we have
$$h(\mu,T) = \sum_{i = 1}^\infty -2^{-i} \log 2^{-i} < \infty.$$
Now let us define a partition $\alpha_k$ by fixing the first $k-1$ elements to be equal to the first $k-1$ elements from the dyadic partition, letting
\[
A_{k, k}:=\left(\frac{t_k}{\log 2},t_k\right],
\]
and, for $i>k$, letting
\[
A_{k, i}:=\left(\frac{t_k}{\log (n+2-k)},\frac{t_k}{\log (n+1-k)}\right].
\]
Thus, for $i>k$, we have that
\[
a_i(\alpha_k)= t_k \frac{\log(i+2-k)-\log(n+1-k)}{\log (i+2-k)\log (i+1-k)},
\]
and these decay for any fixed fixed $k \in \N$ with the rate $O(1/\log i)$ as $i \to \infty$, which is far too slow to have finite entropy for $\mu_k$. This yields that the entropy $h(\mu_k,T_k)= \infty$ for all $k \in \N$.

\section{Computing the specific value of the Hausdorff dimension}\label{sec:geometric}

In this section, we first aim to prove Theorem \ref{mainthm2}. Before we begin, we must introduce some preliminaries and notation (for more details, we refer to \cite{JKPS} and the references therein).
To begin, suppose that we have two maps $S, T: [0, 1]\to [0,1]$ which have exactly two full branches, the first (thinking left to right from the origin) increasing and the second decreasing, and both branches are strictly contracting $C^{1+\varepsilon}$ diffeomorphisms.
We are interested, of course, in the topological conjugacy map $\theta$ between $S$ and $T$, and the set $\Theta:=\{x:\theta'(x)\neq0\}$. Below, to mirror \cite{JKPS} more closely, and to make it clearer where changing from increasing branches to the tent-like case we have here makes differences to the proof, we also use the sets $\mathcal{D}_\sim$ and $\mathcal{D}_\infty$, which are defined to be the set of points where the derivative of said conjugacy map does not exist or is infinite, respectively. Note that these are the only options for the derivative to be non-zero.

We define two H\"older continuous potentials $\phi, \psi:\{0, 1\}^\N\to \R_{<0}$ by setting
\[
\phi(x_1, x_2, x_3, \ldots):= \log|(S^{-1}_{x_1})'\pi_S(x_2, x_3, \ldots)|\ \text{ and } \ \psi(x_1, x_2, x_3, \ldots):= \log|(T^{-1}_{x_1})'\pi_T(x_2, x_3, \ldots)|.
\]
Also, to simplify the notation later, we define $\chi:=\psi - \phi$.
Then, where we recall that $P$ denotes the topological pressure, we can define a function $\beta:\R\to \R$ implicitly through the pressure equation
\[
P(s\phi+\beta(s)\psi) = 0.
\]
We let $\mu_s$ denote the equilibrium measure for the the potential $s\phi+\beta(s)\psi$, which always exists and is unique. Recall from the preliminaries given in Section 2 that this means $\mu_s$ achieves the supremum in the variational principle:
\[
P(s\phi+\beta(s)\psi)= \sup_{\mu\in M_{\sigma}}\left\{h(\mu,\sigma)+\int s\phi+\beta(s)\psi\ d\mu\right\} = h(\mu_{s})+\int s\phi+\beta(s)\psi \ d\mu_{s}.
\]
Further,
\[
\beta'(s):=\frac{-\int \phi\ d\mu_s}{\int \psi\ d\mu_s}<0.
\]
If we suppose that $\phi$ and $\psi$ are cohomologically independent, there also exists a unique $s_0$ such that $\beta'(s_0)=-1$. Let $\tilde{\beta}(s_0):= \beta(s_0)+s_0$.

\begin{proposition}\label{prop:salemthermo}
We have
\begin{eqnarray}\label{starp8}
0< \Hd(\mathcal{D}_\sim) = \Hd(\mathcal{D}_\infty) =\tilde{\beta}(s_0)<1.
\end{eqnarray}
\end{proposition}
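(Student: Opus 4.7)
The plan is to translate the problem into a multifractal question about Birkhoff sums of $\chi=\psi-\phi$ on the full two-shift. Since $S$ and $T$ each have exactly two full monotone branches, the level-$n$ cylinder partitions for $S$ and $T$ are combinatorially identical and $\theta$ maps each $S$-cylinder onto the corresponding $T$-cylinder. Using $C^{1+\eps}$ bounded distortion, the $S$-cylinder at $\pi_S(x)$ has length comparable to $\exp(S_n\phi(x))$ and similarly for $T$; comparing with the unique sibling cylinder at each scale yields the dichotomy $\theta'(\pi_S(x))=0$ iff $\lim_n S_n\chi(x)=-\infty$, $\theta'(\pi_S(x))=\infty$ iff $\lim_n S_n\chi(x)=+\infty$, and $\theta'(\pi_S(x))$ fails to exist precisely when $\liminf_n S_n\chi(x)<\limsup_n S_n\chi(x)$ with the latter $>-\infty$. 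Consequently $\pi_S^{-1}(\mathcal{D}_\sim\cup\mathcal{D}_\infty)=\{x:\limsup_n S_n\chi(x)>-\infty\}$.

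For the lower bound, observe that $\beta'(s_0)=-1$, combined with $\beta'(s)=-\int\phi\,d\mu_s/\int\psi\,d\mu_s$, forces $\int\chi\,d\mu_{s_0}=0$. Writing $I=\int\phi\,d\mu_{s_0}=\int\psi\,d\mu_{s_0}<0$, the variational identity reduces to $h(\mu_{s_0})=-\tilde\beta(s_0)\,I$, and Proposition \ref{prop:measdim} gives
\[
\Hd(\pi_S\mu_{s_0})=\frac{h(\mu_{s_0})}{-I}=\tilde\beta(s_0).
\]
Cohomological independence of $\phi$ and $\psi$ means $\chi$ is not cohomologous to a constant, so Lemma \ref{lem:lawofiterated} applied to $\pm\chi$ gives $\limsup_n S_n\chi=+\infty$ and $\liminf_n S_n\chi=-\infty$ for $\mu_{s_0}$-a.e.\ $x$; hence $\pi_S\mu_{s_0}$ is concentrated on $\mathcal{D}_\sim$, so $\Hd(\mathcal{D}_\sim)\geq\tilde\beta(s_0)$. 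For $\mathcal{D}_\infty$ I take $s<s_0$, for which $\beta'(s)<-1$, equivalently $\int\chi\,d\mu_s>0$; the Birkhoff ergodic theorem then forces $S_n\chi(x)\to+\infty$ $\mu_s$-a.e., so $\pi_S\mu_s$ lives on $\mathcal{D}_\infty$. A direct computation using the pressure equation yields $\Hd(\pi_S\mu_s)=s-\beta(s)/\beta'(s)$, which is continuous in $s$ and tends to $\tilde\beta(s_0)$ as $s\nearrow s_0$, giving $\Hd(\mathcal{D}_\infty)\geq\tilde\beta(s_0)$.

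For the upper bound, I follow the multifractal pressure-cover strategy of \cite{JKPS}, which goes through in our setting with their finite-branch combinatorics replaced by the trivial two-branch structure here. Decomposing $\pi_S^{-1}(\mathcal{D}_\sim\cup\mathcal{D}_\infty)$ into countably many level sets of the Birkhoff average $S_n\chi/n$ and its liminf/limsup, each piece is covered at scale $n$ by cylinders controlled by the potential $s\phi+\beta(s)\psi$ for suitable $s>s_0$; exploiting $P(s\phi+\beta(s)\psi)=0$ and $\beta(s)>0$, a Bowen-type estimate on the resulting sums yields a Hausdorff measure bound at exponent $\tilde\beta(s)$, and letting $s\searrow s_0$ produces $\tilde\beta(s_0)$.

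Finally, $0<\tilde\beta(s_0)<1$ follows from Bowen's formula $P(\phi)=P(\psi)=0$, which gives $\beta(0)=1$ and $\beta(1)=0$, hence $\tilde\beta(0)=\tilde\beta(1)=1$; under cohomological independence $\tilde\beta$ is strictly convex and attains its interior minimum $\tilde\beta(s_0)$ strictly below $1$, while $s,\beta(s)\geq 0$ on $[0,1]$ with $\beta(0)\neq 0$ forces $\tilde\beta(s_0)>0$. The main obstacle I anticipate is the symbolic step: the decreasing branch reverses orientation between a cylinder and its sibling, so the boundary-point comparisons governing existence of $\theta'$ must be carried out carefully, but because there are only two branches the cylinder bookkeeping is rigid and constitutes a genuine simplification relative to \cite{JKPS}.
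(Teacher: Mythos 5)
Your proposal follows essentially the same route as the paper: reduce everything to Birkhoff sums of $\chi=\psi-\phi$ via bounded distortion, pin the dimension to $\tilde\beta(s_0)$ through the pressure equation $P(s\phi+\beta(s)\psi)=0$ and the tangency $\beta'(s_0)=-1$, and sandwich $\mathcal{D}_\sim$ and $\mathcal{D}_\infty$ between the level sets $\{\liminf e^{S_n\chi}=0,\ \limsup e^{S_n\chi}=\infty\}$ and $\{\limsup e^{S_n\chi}>0\}$. The main difference is one of emphasis: you spell out the lower bounds (the law-of-iterated-logarithm argument for $\mathcal{D}_\sim$ via $\mu_{s_0}$ and the Birkhoff/$\mu_s$, $s\nearrow s_0$ argument for $\mathcal{D}_\infty$), which the paper defers wholesale to \cite{JKPS}; conversely you only sketch the two steps the paper actually executes, namely the pressure-controlled cover for the upper bound and, more importantly, the symbolic lemma about neighbouring cylinders. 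That lemma (the paper's Lemma \ref{neighbours}) is exactly the ``orientation-reversal'' obstacle you flag at the end: for the tent-like structure, if $I(x_1,\dots,x_n)$ and $I(y_1,\dots,y_n)$ share an endpoint with first discrepancy at index $j$, then both words must end in $(1,0,\dots,0)$ after position $j$ because the orientation alternates with the parity of $\sum x_i$; this forces the Birkhoff sums on the two cylinders to differ by at most $2\sum_k\var_k$, which is what makes the comparison in Lemma \ref{derivative} legitimate. You correctly identify this as the crux and as the genuine departure from \cite{JKPS}'s all-increasing setting, but it is the one place where an actual argument (rather than a citation) is required, and you leave it unexecuted. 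One further small point: your stated equivalence $\pi_S^{-1}(\mathcal{D}_\sim\cup\mathcal{D}_\infty)=\{x:\limsup_n S_n\chi(x)>-\infty\}$ uses implicitly that $\theta'$ never takes a finite positive value (which the paper asserts without proof, as it is standard for such singular conjugacies); only the inclusion $\subseteq$ is needed for the upper bound and only the much stronger condition $\liminf=-\infty,\ \limsup=+\infty$ is used for the lower bound, so this over-claim is harmless.
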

After proving this proposition, we will show that the value $\tilde \beta (s_0)$ gives the sought-after value in Theorem \ref{mainthm2}, for the specific example contained there.

We will now give a sequence of lemmas which give the necessary geometric information about the derivative, and how the differential quotient can be transferred to a sort of ``symbolic derivative''. We also need some notation: We write $[x_1, \ldots, x_n]:=\{y=(y_1, y_2, \ldots):y_i=x_i\text{ for all }1\leq i \leq n\}$ for the symbolic cylinder sets and write $I(x_1, \ldots, x_n):=\pi([x_1, \ldots, x_n])$ for the projection of the cylinder set $[x_1, \ldots, x_n]$ to a subinterval of $[0, 1]$. We also recall the definition of the variations of $\phi$,
\[
\var_k(\phi):=\sup_{(i_1, \ldots, i_k)\in \{0, 1\}^\N}\sup_{x, y\in [i_1, \ldots, i_k]}|\phi(x) - \phi(y)|,
\]
and note that here, since we are in a compact metric space, $\var_0(\phi)$ is finite. Since the potentials $\phi$  and $\psi$ are H\"older continuous, the variations of both are exponentially decaying and thus summable.

\begin{lem}\label{neighbours}
There exists a constant $C>0$, independent of $n$, such that if $I(x_1,\ldots,x_n)\cap I(y_1,\ldots, y_n)\neq\emptyset$ and $\omega, \tau\in [x_1,\ldots,x_n]\cup [y_1,\ldots,y_n]$ then
$$|S_n\phi(\omega)-S_n\phi(\tau)|\leq C$$
and
$$|S_n\psi(\omega)-S_n\psi(\tau)|\leq C.$$
\end{lem}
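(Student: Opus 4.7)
The plan is to split the argument into a trivial case and a combinatorial one, both resting on bounded distortion from summable variations. First, if $\omega$ and $\tau$ lie in the same $n$-cylinder, then directly from the definition of variations
\[
|S_n\phi(\omega)-S_n\phi(\tau)| \;\leq\; \sum_{k=1}^{n}\var_{n+1-k}(\phi) \;\leq\; \sum_{k=1}^\infty \var_k(\phi) < \infty,
\]
since $\phi$ is H\"older on the compact shift $\{0,1\}^\N$, and the same argument handles $\psi$. The remaining case to treat is therefore $\omega\in[x_1,\ldots,x_n]$ and $\tau\in[y_1,\ldots,y_n]$ with two \emph{distinct} cylinders whose projections share a common boundary point.

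Next I would replace Birkhoff sums by cylinder lengths. A standard Gibbs-type estimate, from the mean value theorem and summable variations, yields a constant $C_1>0$ independent of $n$, $(x_1,\ldots,x_n)$ and $\omega\in[x_1,\ldots,x_n]$ such that
\[
C_1^{-1}\,|I(x_1,\ldots,x_n)| \;\leq\; e^{S_n\phi(\omega)} \;\leq\; C_1\,|I(x_1,\ldots,x_n)|,
\]
and likewise for $\tau$ and $I(y_1,\ldots,y_n)$. The lemma thus reduces to showing that two adjacent level-$n$ cylinders in the $S$-partition have Lebesgue lengths comparable up to a uniform multiplicative constant (and the analogous statement for $T$).

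For this last reduction, let $k$ be the first index at which $x_k\neq y_k$. The common prefix factors out: both cylinders are images of level-$(n-k+1)$ cylinders under the $C^{1+\varepsilon}$ diffeomorphism $F := S^{-1}_{x_1}\circ\cdots\circ S^{-1}_{x_{k-1}}$ of $[0,1]$, and bounded distortion for $F$ collapses the comparison to the case $x_1\neq y_1$. With only two full branches, the shared boundary must then be the apex $a$ at which $I(0)$ and $I(1)$ meet, and the tent-like orientation becomes essential here: the left branch being increasing and the right decreasing gives $S^{-1}_0(1)=a=S^{-1}_1(1)$, so the condition that $I(x_1,\ldots,x_n)$ touches $a$ forces $1$ to be the right endpoint of $I(x_2,\ldots,x_n)$, and the same holds for the $y$-side. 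A short induction then identifies $[1,0,\ldots,0]$ as the unique level-$(n-1)$ cylinder whose projection contains $1$, so $(x_2,\ldots,x_n) = (y_2,\ldots,y_n) = (1,0,\ldots,0)$. Consequently $I(x_1,\ldots,x_n)$ and $I(y_1,\ldots,y_n)$ are images of one and the same lower-level cylinder under $S^{-1}_0$ and $S^{-1}_1$, so their lengths differ only by the ratio of two derivatives of $C^{1+\varepsilon}$ diffeomorphisms of $[0,1]$, which is bounded by a constant depending only on $S$. Running the same argument with $T$ handles $\psi$.

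The main obstacle will be this final combinatorial identification: both the two-branch structure and the tent-like orientation (one increasing, one decreasing branch) must be used to force both boundary-adjacent subcylinders to share the suffix $(1,0,\ldots,0)$, so that the length comparison collapses to a ratio of two single-branch derivatives. Everything else is the routine bounded-distortion manipulation familiar from conformal iterated function systems.
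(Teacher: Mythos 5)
Your proposal is correct and rests on the same central combinatorial observation as the paper's proof (that two adjacent level-$n$ cylinders can only differ at a single coordinate $j$, with forced common suffix $(1,0,\ldots,0)$, because of the orientation-reversing decreasing branch), but you take a more roundabout route to the estimate. The paper bounds $|S_n\phi(\omega)-S_n\phi(\tau)|$ directly in terms of variations: since $\sigma^i\omega$ and $\sigma^i\tau$ agree on their first $j-i-1$ letters for $i<j$ and on their first $n-i$ letters for $i\geq j$, one gets $|S_n\phi(\omega)-S_n\phi(\tau)|\leq\sum_{k=0}^{j-1}\var_k(\phi)+\sum_{k=1}^{n-j}\var_k(\phi)$, which is a uniform bound by summability of variations. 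You instead convert Birkhoff sums to cylinder lengths via a Gibbs/bounded-distortion estimate (this is in effect Lemma \ref{BD}, which the paper proves separately), factor out the common prefix to reduce to $x_1\neq y_1$, and then bound the length ratio of $S_0^{-1}(I(z_2,\ldots,z_n))$ and $S_1^{-1}(I(z_2,\ldots,z_n))$ by the ratio of derivative bounds for the two branches. Both arguments are sound; yours is logically valid but the detour through lengths is unnecessary -- the bounded distortion lemma itself is proved from the same summable-variations mechanism, so you are re-deriving information you could read off symbolically. The paper's route is therefore shorter and avoids needing the $C^{1+\varepsilon}$ hypothesis beyond what is already encoded in summable variations. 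Finally, be careful with the bound coming from $\var_0$: since $\var_0(\phi)$ is the oscillation of $\phi$, the final constant should really be of the form $\var_0(\phi)+2\sum_{k\geq1}\var_k(\phi)$ rather than just $2\sum_{k\geq1}\var_k(\phi)$, a small point the paper glosses over as well.
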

\begin{proof}
Let $j:=\inf\{1\leq k\leq n: x_k\neq y_k\}$. Observe that for our maps with one increasing and one decreasing full branch, the projection of the cylinder sets from $\{0, 1\}^\N$ works as follows: If $(x_1, \ldots, x_k)\in \{0, 1\}^k$ is such that $\sum_{i=1}^k x_i$ is odd, then $I(x_1, \ldots, x_k)$ splits into $I(x_1, \ldots, x_k, 0)\cup I(x_1, \ldots, x_k, 1)$, written in order, left to right, whereas if the sum of the digits $x_i$ is even, the $(k+1)$-level cylinders project the other way around, namely, to $I(x_1, \ldots, x_k, 1)\cup I(x_1, \ldots, x_k, 0)$. This implies that in order for $I(x_1,\ldots,x_n)\cap I(y_1,\ldots, y_n)\neq\emptyset$, we must have that $(x_{j+1},\ldots,x_n)=(y_{j+1},\ldots,y_n)=(1,0,\ldots,0)$. That is, the words $(x_1, \ldots, x_n)$ and $(y_1, \ldots, y_n)$ can only be different at the $j$-th letter.
Thus we have that
$$|S_n\phi(\omega)-S_n\phi(\tau)|\leq \sum_{k=0}^{j-1} \mathrm{var}_k(\phi)+\sum_{k=1}^{n-j} \mathrm{var}_{k}(\phi)\leq 2\sum_{k=1}^{\infty}\mathrm{var}_k(\phi)$$
and
$$|S_n\psi(\omega)-S_n\psi(\tau)|\leq \sum_{k=0}^{j-1} \mathrm{var}_k(\psi)+\sum_{k=1}^{n-j} \mathrm{var}_{k}(\psi)\leq 2\sum_{k=1}^{\infty}\mathrm{var}_k(\psi).$$
(Here, the $k=0$ term occurs precisely for the one difference at the $j$-th letter.) Thus we can take $C=2\max\left\{\sum_{k=1}^{\infty}\mathrm{var}_k(\phi),\sum_{k=1}^{\infty}\mathrm{var}_k(\psi)\right\}.$
\end{proof}

Before stating the next lemma, we note that by $f\asymp g$, we mean there exists a constant $c>1$ such that $c^{-1}\cdot f\leq g\leq c\cdot f$.
\begin{lem}\label{BD}
For all $x$ we have that,
$$\mathrm{diam}([x_1,\ldots,x_n])\asymp e^{S_n\phi(x)}$$
and
$$\mathrm{diam}(\theta[x_1,\ldots,x_n])\asymp e^{S_n\psi(x)}.$$
\end{lem}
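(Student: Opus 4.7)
The plan is to prove both asymptotics simultaneously by the standard bounded distortion argument applied to the compositions of inverse branches; the statement is exactly the Gibbs-property estimate for the geometric potentials $\phi$ and $\psi$.

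First I would fix a word $(x_1,\ldots,x_n)\in\{0,1\}^n$ and write every $y\in I(x_1,\ldots,x_n)$ as $y = S^{-1}_{x_1}\circ\cdots\circ S^{-1}_{x_n}(z)$ for some $z\in[0,1]$. By the mean value theorem,
$$\diam I(x_1,\ldots,x_n) = \bigl|(S^{-1}_{x_1}\circ\cdots\circ S^{-1}_{x_n})'(z_0)\bigr|$$
for some $z_0\in[0,1]$. Expanding by the chain rule and recognising each factor as the exponential of $\phi$ evaluated at an appropriate shift, this becomes $\exp(S_n\phi(\omega))$ where $\omega\in[x_1,\ldots,x_n]$ is the symbolic representative of $S^{-1}_{x_1}\circ\cdots\circ S^{-1}_{x_n}(z_0)$ followed by an arbitrary tail.

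Next I would invoke bounded distortion to pass from $\omega$ to an arbitrary $x\in[x_1,\ldots,x_n]$. Since $x$ and $\omega$ agree in their first $n$ letters, the difference $|S_n\phi(x)-S_n\phi(\omega)|$ is controlled term by term by $\var_k(\phi)$ for $k=n-j,\ldots,n-1$, summing to at most $\sum_{k=0}^{\infty}\var_k(\phi)$. Because $\phi$ is H\"older continuous, this sum is finite and independent of $n$, so
$$\diam I(x_1,\ldots,x_n) = e^{S_n\phi(\omega)} \asymp e^{S_n\phi(x)},$$
with implied constant $e^{\pm\sum_k \var_k(\phi)}$.

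The second asymptotic is then immediate: under the conjugacy $\theta$, the interval $\theta\, I(x_1,\ldots,x_n)$ is exactly the construction interval of $T$ associated to the same symbolic word $(x_1,\ldots,x_n)$, and the identical argument applied to the inverse branches of $T$ with the potential $\psi$ in place of $\phi$ gives $\diam \theta\, I(x_1,\ldots,x_n)\asymp e^{S_n\psi(x)}$. No real obstacle is anticipated here; the only point to be careful about is the identification of the symbolic cylinder $[x_1,\ldots,x_n]$ with its projection $I(x_1,\ldots,x_n)$ on which the diameter is being measured, and the fact that the constants in $\asymp$ depend only on $\sum_k \var_k(\phi)$ and $\sum_k \var_k(\psi)$, both of which are finite by H\"older continuity of the potentials.
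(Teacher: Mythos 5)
Your proof is correct and follows essentially the same route as the paper's: apply the mean value theorem to the composition $f_{x_1}\circ\cdots\circ f_{x_n}$, rewrite the resulting derivative via the chain rule as $e^{S_n\phi(\omega)}$ for a specific $\omega$ in the cylinder, pass to an arbitrary $x$ in the cylinder using the summable variations of $\phi$, and repeat verbatim with $\psi$ and the branches of $T$ for the second estimate. The only cosmetic difference is that you bound the distortion by $\sum_{k\ge 0}\var_k(\phi)$ whereas $\sum_{k\ge 1}\var_k(\phi)$ already suffices (since $x$ and $\omega$ share the first $n$ symbols), but this is harmless.
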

\begin{proof}
Let us denote the inverse branches of $S$ by $f_0$ and $f_1$. Then if we apply the mean value theorem to the map $f_{x_1}\circ \cdots\circ f_{x_n}$, it follows that there exists a point $z\in(0, 1)$ such that
\[
(f_{x_1}\circ \cdots\circ f_{x_n})'(z) = \text{diam}([x_1,\ldots,x_n]).
\]
 Then, if we set $y:=f_{x_1}\circ \cdots\circ f_{x_n}(z)$, we have that $y\in [x_1,\ldots,x_n]$ and
$$\text{diam}([x_1,\ldots,x_n])\asymp e^{S_n\phi(y)}.$$
The result then follows since
$$|S_n\phi(y)-S_n\phi(x)|\leq\sum_{k=1}^{\infty}\mathrm{var}_k(\phi).$$
The second part of the result follows by exactly the same method.
\end{proof}
We can now relate these results to the derivative at a point $x$.
\begin{lem}\label{derivative}
Fix $x\neq y\in [0,1]$ and let $n=\inf\{k:I(x_1,\ldots,x_k)\cap I(y_1,\ldots,y_k)=\emptyset\}.$ We have that
$$\frac{|\theta(x)-\theta(y)|}{|x-y|}\asymp e^{S_n\chi(x)}.$$
\end{lem}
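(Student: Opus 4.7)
The plan is to estimate the denominator $|x-y|$ and the numerator $|\theta(x)-\theta(y)|$ separately using Lemma \ref{BD}, and then divide. The strategy is: show that each of the two distances is comparable to the diameter of the level-$n$ cylinder (in the appropriate coordinate system), modulo the bounded distortion provided by Lemma \ref{neighbours}, and then use $\chi=\psi-\phi$ to collect the exponents.

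First I would unpack the defining condition on $n$ in light of the two-branch structure already analysed in the proof of Lemma \ref{neighbours}. Since $I(x_1,\ldots,x_{n-1})\cap I(y_1,\ldots,y_{n-1})\neq\emptyset$ but $I(x_1,\ldots,x_n)\cap I(y_1,\ldots,y_n)=\emptyset$, the level-$(n-1)$ cylinders must be adjacent and share exactly one boundary point $z$, and the orientation-flipping rule for one increasing and one decreasing branch forces the following dichotomy at level $n$: either \emph{both} of $I(x_1,\ldots,x_n)$ and $I(y_1,\ldots,y_n)$ are the sub-cylinders of their parents lying away from $z$, or exactly one is. In either case a short geometric inspection shows that $x$ and $y$ are separated by at least the width of a near-$z$ sub-cylinder at level $n$, and trivially at most by the diameter of $I(x_1,\ldots,x_{n-1})\cup I(y_1,\ldots,y_{n-1})$. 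Combining this with the bounded-distortion estimate of Lemma \ref{neighbours} applied to adjacent level-$n$ cylinders, and with the observation that $\phi$ is uniformly bounded (so $e^{S_{n-1}\phi(x)}\asymp e^{S_n\phi(x)}$), Lemma \ref{BD} delivers
\[
|x-y|\;\asymp\;\diam\bigl(I(x_1,\ldots,x_n)\bigr)\;\asymp\;e^{S_n\phi(x)}.
\]
An identical argument carried out in the $T$-coordinates, using that $\theta$ sends $I(x_1,\ldots,x_k)=\pi_S[x_1,\ldots,x_k]$ to $\pi_T[x_1,\ldots,x_k]$ and that $\psi$ is likewise bounded, yields
\[
|\theta(x)-\theta(y)|\;\asymp\;e^{S_n\psi(x)}.
\]
Dividing these two estimates gives the claim.

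The main obstacle is the lower bound $|x-y|\gtrsim e^{S_n\phi(x)}$: a priori one could worry that $x$ and $y$ both crowd against the contact point $z$ of the adjacent level-$(n-1)$ cylinders, in which case $|x-y|$ would collapse far below $e^{S_n\phi(x)}$. The point, however, is that the index $n$ is chosen precisely to rule this out — its definition as the first level of \emph{truly empty} intersection forces at least one of $x, y$ to have escaped the near-$z$ sub-cylinder of its level-$(n-1)$ parent, so the dichotomy above applies. Verifying that this escape yields a separation of order $e^{S_n\phi(x)}$ is the one place where the tent-like geometry (one increasing and one decreasing full branch) is used essentially, and it is what makes the argument notably cleaner here than in the purely increasing setup of \cite{JKPS}.
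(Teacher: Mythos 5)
Your argument is correct and follows exactly the route the paper takes: the paper's proof is a terse three-sentence version of what you wrote, asserting that at level $n-1$ the points sit in neighbouring cylinders while at level $n$ there is at least one cylinder between them, and then citing Lemmas \ref{neighbours} and \ref{BD}, which is precisely the numerator/denominator estimate you spell out via the near-$z$/far-$z$ dichotomy. One small misattribution: the tent-like (increasing/decreasing) geometry is not what secures the lower bound in this lemma — that follows from bounded distortion of adjacent cylinders for any two-full-branch system — rather, the tent structure is exploited in the proof of Lemma \ref{neighbours} itself, where it forces two intersecting same-level cylinders to differ in exactly one symbol.
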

\begin{proof}
First note that as the points $x$ and $y$ are different in the unit interval (not just having different symbolic codes), this $n$ always exists In other words, at level $n-1$ the points $x$ and $y$ are in neighbouring subintervals of $[0,1]$, and at level $n$ there is at least one interval between them.
By the previous two results it then follows that
$$|\theta(x)-\theta(y)|\asymp e^{S_n\psi(x)}$$
and
$$|x-y|\asymp e^{S_n\phi(x)}.$$
The result immediately follows.
\end{proof}

\begin{lem}\label{lem2.4}
\
\begin{itemize}
  \item[(a)]
  $\mathcal{D}_{\sim}\supseteq\{x:\limsup_{n\to\infty}e^{S_n\chi(x)}=\infty\text{ and }\liminf_{n\to\infty}e^{S_n\chi(x)}=0\}$,
  \item[(b)]
  $\mathcal{D}_{\sim}\subseteq\{x:\limsup_{n\to\infty}e^{S_n\chi(x)}>0\}$
\end{itemize}
\end{lem}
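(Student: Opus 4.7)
The strategy is to translate the asymptotic behaviour of the symbolic Birkhoff sums $S_n\chi(x)$ directly into the existence or non-existence of the differential quotient of $\theta$ at $x$, exploiting Lemma~\ref{derivative}.

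\emph{Part (a).} Suppose $x$ satisfies $\limsup_n e^{S_n\chi(x)}=\infty$ and $\liminf_n e^{S_n\chi(x)}=0$. For each $n\in\N$ I would choose a companion point $y_n$ in the neighbouring level-$n$ cylinder to $x$: concretely $y_n\in I(x_1,\ldots,x_{n-1},1-x_n)$, positioned in the interior deep enough that the projected $(n+1)$-st level cylinders of $x$ and $y_n$ are disjoint. Then the index in Lemma~\ref{derivative} is either $n$ or $n+1$; since $\chi$ is H\"older on the compact space $\{0,1\}^\N$ it is bounded, so the two indices give the same limsup/liminf asymptotics up to a multiplicative constant. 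By Lemma~\ref{BD},
\[
|y_n-x|\leq\operatorname{diam} I(x_1,\ldots,x_{n-1})\asymp e^{S_{n-1}\phi(x)}\to 0,
\]
because $\phi<0$, so $y_n\to x$. Applying Lemma~\ref{derivative},
\[
\frac{|\theta(y_n)-\theta(x)|}{|y_n-x|}\asymp e^{S_n\chi(x)}.
\]
Passing to the subsequences realising the limsup and liminf produces two sequences tending to $x$ along which the differential quotient tends to $\infty$ and $0$ respectively, so $\theta'(x)$ cannot exist, i.e.\ $x\in\mathcal D_\sim$.

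\emph{Part (b).} I would argue the contrapositive. Suppose $\limsup_n e^{S_n\chi(x)}=0$; since $e^{S_n\chi(x)}\geq 0$, this forces $\lim_n e^{S_n\chi(x)}=0$. Let $y_j\to x$ with $y_j\neq x$, and set
\[
n_j:=\inf\{k:I(x_1,\ldots,x_k)\cap I(y_{j,1},\ldots,y_{j,k})=\emptyset\}.
\]
First I would verify that $n_j\to\infty$: otherwise a subsequence would satisfy $n_j\leq N$, confining those $y_j$ to a cylinder disjoint from a fixed cylinder containing $x$, contradicting $y_j\to x$. Then Lemma~\ref{derivative} yields
\[
\frac{|\theta(y_j)-\theta(x)|}{|y_j-x|}\asymp e^{S_{n_j}\chi(x)}\longrightarrow 0,
\]
so $\theta'(x)$ exists and equals $0$, meaning $x\notin\mathcal D_\sim$.

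\emph{Main obstacle.} The delicate point is the interplay between the symbolic disagreement level of two words and the geometric index $n$ appearing in Lemma~\ref{derivative}: because one branch of $S$ (resp.\ $T$) is increasing and the other decreasing, two adjacent level-$n$ cylinders are typically closed intervals sharing a boundary point, so strict disjointness is not automatic at the first symbolic disagreement. This off-by-one discrepancy is absorbed by the boundedness of $\chi$, but it requires a careful choice of $y_n$ (sitting strictly inside a neighbouring cylinder) in part (a), and a short argument in part (b) to ensure $n_j\to\infty$, with only a minor extra check needed when $x$ happens to be a cylinder endpoint with two symbolic codings.
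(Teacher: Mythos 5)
Your proof is correct and follows essentially the same route as the paper: for part (a) one constructs a sequence of companion points $y_n$ at symbolic disagreement level roughly $n$ and reads off the differential quotient from Lemma~\ref{derivative}, and for part (b) one argues the contrapositive and checks that the disagreement level $n_j$ tends to infinity. The extra care you take about the off-by-one between the symbolic disagreement level and the first level of geometric disjointness is a genuine subtlety that the paper's proof glosses over (it simply asserts the existence of $y(n)$ achieving disjointness exactly at level $n$), and your observation that boundedness of $\chi$ absorbs this shift is the right fix.
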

\begin{proof}
Let $x$ be such that $\limsup_{n\to\infty}e^{S_n\chi(x)}=\infty$ and $\liminf_{n\to\infty}e^{S_n\chi(x)}=0$. For each $n\in\N$ we can find $y(n)$ such that
$n=\inf\{k:[x_1,\ldots,x_k]\cap [y(n)_1,\ldots,y(n)_k]=\emptyset.$ By Lemma \ref{derivative} we have that
$$\frac{|\theta(x)-\theta(y(n))|}{|x-y(n)|}\asymp e^{S_n\chi(x)}.$$
Part (a) of the Lemma follows immediately.

To prove (b),  suppose that $\lim_{n\to \infty}e^{S_n\chi(x)}=0$. Let $y(n)$ be a sequence such that $\lim_{n\to\infty} y(n)=x$ where each $y(n)\neq x$. Let
$k(n)=\inf\{j:[x_1,\ldots,x_j]\cap [y(n)_1,\ldots,y(n)_j]=\emptyset\}.$ We have that $\lim_{n\to\infty}k(n)=\infty$ and
$$\frac{|\theta(x)-\theta(y(n))|}{|x-y(n)|}\asymp e^{S_{k(n)}\chi(x)}.$$
Thus since $\lim_{n\to\infty}e^{S_{k(n)}(x)}=0$ we have that $\theta'(x)=0$ and the proof is finished.
\end{proof}

\begin{rem}\label{remp10}
Notice that the proof of part (b) in the above lemma shows rather more, namely, that
\[
\Theta\subset \{x:\limsup_{n\to\infty}e^{S_n\chi(x)}>0\}.
\]
\end{rem}

Now, recalling the definition of $s_0$ given above it can be shown by adapting the methods in \cite{JKPS} (the start of Section 2.2) that
$$\dim_{\mathrm{H}}\left\{x:\limsup_{n\to\infty}e^{S_n\chi(x)}>0\right\}\leq\tilde{\beta}(s_0)$$
and by the arguments in Section 2.3 of \cite{JKPS} that
$$\dim_{\mathrm{H}}\left\{x:\liminf_{n\to\infty}e^{S_n\chi(x)}=0\text{ and }e^{S_n\chi(x)}=\infty\right\}\geq\tilde{\beta}(s_o)$$
and so
$$\dim_{\mathrm{H}}(\mathcal{D}_{\sim})=\tilde{\beta}(s_0).$$
The second part is essentially identical to the work in \cite{JKPS} so we only show here how to adapt the proof of
$$\dim_{\mathrm{H}}\left\{x:\limsup_{n\to\infty}e^{S_n\chi(x)}>0\right\}\leq\tilde{\beta}(s_0).$$

If $\limsup _{n\to\infty}S_n\chi(x)>0$,  setting $K_1:=\sum_{k=0}^{\infty}\var_k(\chi)$ and letting $x^{(n)}:=\overline{x_1,\ldots,x_n}$ be the periodic element in the cylinder set $[x_1,\ldots,x_n]$, we then have for infinitely many $n\in\N$ that \[
S_n\chi\left(x^{(n)}\right)>-K_1.
\] It follows that
$$e^{K_1}e^{S_n\psi(x^{(n)})}\geq e^{S_n\phi(x^{(n)})}$$
for infinitely many $n\in\N$.
So for all $N\in\N$ the union for $n\geq N$ of $n$th level cylinders $[x_1,\ldots,x_n]$ where $S_n\psi(x^{(n)})>S_n\phi(x^{(n)})-K_1$ forms a cover of the set $\{x:\limsup_{n\to\infty} e^{S_n\chi(x)}>0\}$. We will denote this cover by $C_n$.

We can also choose $\xi<1$ such that $\diam[x_1,\ldots,x_n]\leq\xi^n$ for any cylinder set $[x_1,\ldots,x_n]$. Thus

\begin{eqnarray*}
\mathcal{H}_{\xi^N}^{\beta(s_0)+s_0+\epsilon}(\mathcal{D}_{\sim})&\leq&\sum_{n\geq N}\sum_{C_n} |x_1,\ldots,x_n|^{\beta(s_0)+s_0+\epsilon}\ll \sum_{n\geq N}\xi^{n\epsilon}\sum_{C_n} e^{(\beta(s_0)+s_0)S_n\phi(x^{(n)})}\\
&\ll&\sum_{n\geq N}\xi^{n\epsilon}\sum_{C_n} e^{(\beta(s_0)+s_0)S_n\phi(x^{(n)})}\\
&\ll&\sum_{n\geq N}\xi^{n\epsilon}\sum_{C_n}e^{\beta(s_0)\phi(x^{(n)})+s_0\psi(x^{(n)})}.
\end{eqnarray*}

Now since $P(\beta(s_0)\phi+s_0\psi)=0$ we can choose $N$ sufficiently large such that for any $n\geq N$,
$$\frac{1}{n}\log \sum_{C_n}e^{\beta(s_0)\phi(x^{(n)})+s_0\psi(x^{(n)})}\leq \frac{1}{n}\log \sum_{(x_1,\ldots,x_n)\in\{0,1\}^n}  e^{\beta(s_0)\phi(x^{(n)})+s_0\psi(x^{(n)})}\leq -\epsilon\log((\xi+1)/2).$$

Thus for $N$ sufficiently large if $n\geq N$ then
$$\sum_{C_n}e^{\beta(s_0)\phi(x^{(n)})+s_0\psi(x^{(n)})}\leq \left(\frac{\xi+1}{2}\right)^{-n\epsilon}$$ and so
$$\sum_{n\geq N}\sum_{C_n} |x_1,\ldots,x_n|^{\beta(s_0)+s_0+\epsilon}\ll 1.$$
Thus for all $N\in\N$, $\mathcal{H}_{\xi^N}^{\beta(s_0)+s_0+\epsilon}(\mathcal{D}_{\sim})\ll 1$ and so, since $\epsilon>0$ was arbitrary,  $\Hd \mathcal{D}_{\sim}\leq \beta(s_0)+s_0$.

To finish the proof of (\ref{starp8}) in light of Remark \ref{remp10} it suffices to show that
\[
\Hd(\mathcal{D}_\infty)\geq \tilde{\beta}(s_0).
\]
However, this again can be done precisely as in \cite{JKPS}, so we omit the details.  Thus the proof of Proposition \ref{prop:salemthermo} is complete.

Let us now show how to prove Theorem \ref{mainthm2} using Proposition \ref{prop:salemthermo}. First, fix $0 < \tau,\tau' < 1$ and denote by $\theta = \theta_{\tau,\tau'}$ the conjugacy map between $T_\tau$ and $T_{\tau'}$.  In order to apply Proposition \ref{prop:salemthermo}, we observe that the conjugacy $\theta$ coincides with the conjugacy map between the Farey maps $F_{\tau}$ and $F_{\tau'}$, where
$F_{\tau}$ is defined for $x\in [0,1]$ by
$$F_{\tau}(x)  := \begin{cases}
x/\tau, & x < \tau\\
(x-1)/(1-\tau), & x \geq 1-\tau.
\end{cases}
$$
For more details on these maps, we refer to \cite{KMS}. That the conjugacies coincide is a direct consequence of the fact that $T_\tau$ is the jump transformation of $F_\tau$.

\begin{proof}[Proof of Theorem \ref{mainthm2}]
We apply Proposition \ref{prop:salemthermo} with $S:=F_{\tau}$ and $T := F_{\tau'}$ the tent map.
In this case the potentials $\phi := \phi_\tau$ and $\psi:= \phi_{\tau'}$ are given by
$$\phi_\tau(x_1, x_2, x_3, \ldots) =\left\{
                                 \begin{array}{ll}
                                   \log\tau, & \hbox{if $x_1=0$;} \\
                                   \log(1-\tau), & \hbox{if $x_1 = 1$.}
                                 \end{array}
                               \right.$$
and similarly for $\phi_{\tau'}$. Also, since these maps are linear, we have that the measure $\mu_{s_0}$ is given by the $(p_{\tau,\tau'}, 1-p_{\tau,\tau'})$-Bernoulli measure $\mu_{\tau,\tau'}$ such that
\[
\beta'(s_0) = \frac{\int \phi_\tau\, d\mu_{\tau,\tau'}}{\int \phi_{\tau'}\, d\mu_{\tau,\tau'}}=1.
\]
Let us find the precise value for this $p = p_{\tau,\tau'}$ as follows. The fraction is
\[
\frac{\int \phi_{\tau}\, d\mu_{\tau,\tau'}}{\int \phi_{\tau'}\, d\mu_{\tau,\tau'}} = \frac{p\log \tau+(1-p)\log(1-\tau)}{p\log \tau'+(1-p)\log(1-\tau')}
\]
and so
\[
p = \frac{\log (1-\tau') - \log(1-\tau)}{\log \tau - \log(1-\tau) - \log \tau' + \log (1-\tau')}
\]
Furthermore, as $\mu_{s_0}= \mu_\tau$ is an equilibrium measure, we have that
\[
\Hd \{x:\theta_{\tau,\tau'}'(x)\neq 0\} = \tilde{\beta}(s_0) = \frac{h(\mu_{s_0})}{-\int \phi\ d\mu_{s_0}}= \frac{p \log p+(1-p)\log(1-p)}{p\log \tau+(1-p)\log(1-\tau)}
\]
as claimed.
\end{proof}

\begin{rem}
The proof of Proposition \ref{prop:salemthermo} is similar to how in \cite{JKPS} the \textit{Salem family} $S_\tau$ is analysed, where the Salem maps are interval maps with two increasing branches with slopes $1/\tau$ and $1/(1-\tau)$.
\end{rem}




\begin{thebibliography}{31}
\bibitem{AA} A. Arroyo. \newblock{Generalised L\"uroth expansions and a family of Minkowski question-mark functions.} Preprint, available on arxiv: http://arxiv.org/pdf/1407.0354.pdf
\bibitem{BBDK} J. Barrionuevo, R. M. Burton, K. Dajani and C. Kraaikamp. \newblock{Ergodic properties of generalised L\"uroth series.}
\emph{Acta Arith.}, {\bf LXXIV} (4), 311-327, 1996.
\bibitem{BT} V. Baladi, M. Todd: Linear response for intermittent maps, \textit{Comm. Math. Phys.} (to appear), 2015, Preprint at \url{http://arxiv.org/abs/1508.02700}
\bibitem{cawley_mauldin} R. Cawley and R.D. Mauldin. \newblock{Multifractal decompositions of Moran fractals.} {\em Advances in Mathematics} 92, 196--236, 1992.
    \bibitem{DK} K. Dajani and C. Kraaikamp. \newblock { \emph{Ergodic
theory of numbers}}. \newblock Carus Math. Monogr. 29. Math. Assoc.
of America, Washington, DC, 2002.


\bibitem{Dar}R. Darst. \newblock{The Hausdorff dimension of the non-differentiability set of the Cantor function is $[\ln2/\ln3]^2$.} {\em Proc. Amer. Math. Soc.} 119, 105--108, 1993.
    \bibitem{denjoy}A.~Denjoy. \newblock Sur une fonction r\'eelle de
Minkowski. \newblock {\emph{J. Math. Pures Appl.}}, 17:105--151,
1938.
\bibitem{DP}M. Denker and W. Philipp. \newblock{Approximation by Brownian motion for Gibbs measures and flows under a function.} {\em Ergodic Theory Dynam. Systems} 4 , no. 4, 541--552, 1984.
\bibitem{Fal}
K. Falconer.
\newblock {\em Fractal Geometry.} \newblock John Wiley, New York, 1990.
\bibitem{Fa2} K. Falconer. \newblock{One-sided multifractal analysis and points of non-differentiability
of devil's staircases.} {\em Math. Proc. Camb. Phil. Soc.} 136 , 167--174, 2004.
\bibitem{FT} J. M. Freitas, M. Todd: Statistical stability for equilibrium states. \textit{Dynamics, games and science. II}, 317--321, Springer Proc. Math., 2, Springer, Heidelberg, 2011.
\bibitem{JKPS} T. Jordan, M. Kesseb{\"o}hmer, M. Pollicott and  B.O.~Stratmann. \newblock{Sets of non-differentiability for conjugacies between expanding interval maps.} \newblock {\em Fundamenta Mathematicae,} 206:161--183, 2009.

    \bibitem{KKK} S. Kalpazidou, A. Knopfmacher and J. Knopfmacher. \newblock Metric properties of alternating L\"{u}roth series. \newblock {\emph{Portugaliae Math.}}  {\bf 48}, 319--325, 1991.
\bibitem{KesseboehmerStratmann:07}M.~Kesseb{\"o}hmer and B.O.~Stratmann.
\newblock A multifractal analysis for Stern-Brocot intervals, continued
fractions
and Diophantine growth rates. \newblock {\emph{J. Reine Angew.
Math. (Crelle's journal)}} {\bf 605}, 133-163, 2007.
\bibitem{KS1} M. Kesseb\"{o}hmer and B.O.~Stratmann. \newblock{H\"{o}lder-differentiability of Gibbs distribution functions.} \newblock{\em Math. Proc. Cambridge Philos. Soc.} 147 no. 2, 489--503, 2009.
\bibitem{KS2}  M. Kesseb\"{o}hmer and B.O.~Stratmann. \newblock{Fractal analysis for sets of non-differentiability of Minkowski's question mark function.} \newblock{\emph{J. Number Theory}} 128 no. 9, 2663--2686, 2008.
\bibitem{KMS} M.~Kesseb{\"o}hmer, S. Munday and B.O. Stratmann. \newblock{Strong renewal theorems and
Lyapunov spectra for $\alpha$-Farey and $\alpha$-L\"uroth systems.}  \newblock {\em Ergodic Theory Dynam. Systems}, 32 no. 3:989--1017, 2012.
\bibitem{cf} I. Ya. Khinchin. \newblock {\em Continued Fractions}, Univ. Chicago Press, 1964.
\bibitem{LXD} W. Li, D. Xiao and F. Dekking. \newblock{ Non-differentiability of devil's
staircases and dimensions of subsets of Moran sets.} {\em Math. Proc. Camb. Phil. Soc.}
133 no. 2, 345--355, 2002.
\bibitem{Luroth} J. L\"uroth. \newblock{\"Uber eine eindeutige Entwickelung von Zahlen in eine unendliche Reihe}. \textit{Math. Ann.} 21, 411--423, 1883.
\bibitem{Mandelbrot} B.~Mandelbrot. \newblock {\emph{Fractals:
form, chance, and dimension}}. Freeman, San Francisco, 1977.
\bibitem{MP} P. Manneville, Y. Pomeau, Intermittent transition to turbulence in dissipative dynamical systems, \textit{Comm. Math. Phys.} 74(2), 189-197, 1980.
\bibitem{MU} R.D. Mauldin and M. Urba\'{n}ski. \newblock\emph{Graph directed Markov systems: geometry and dynamics of limit sets}, Cambridge University Press, 2003.
\bibitem{min} H. Minkowski. \newblock\emph{Geometrie der Zahlen}.   Gesammelte Abhandlungen, Vol. 2, 1911;
reprinted by Chelsea, New York, 43--52, 1967.

\bibitem{smnew} S. Munday. \newblock{On the derivative of the  $\alpha$-Farey-Minkowski function.} \emph{Discrete Cont. Dyn. Syst.} 34(2), 709--732, 2014.
\bibitem{nakaishi} K. Nakaishi. \newblock{Multifractal formalism for some parabolic maps}. \emph{Ergod. Th. Dynam. Sys.} 20, 843--857, 2000
\bibitem{PP} W. Parry and M. Pollicott. \newblock\emph{ Zeta functions and the periodic orbit structure of hyperbolic dynamics.} Ast\'erisque No. 187-188 (1990).
    \bibitem{Salem} R. Salem. \newblock On some singular monotonic functions which are strictly increasing. \newblock{\emph{Trans. Amer. Math. Soc.}} {\bf53} no. 3, 427--439, 1943.
\bibitem{sarigETDS} O. Sarig. Thermodynamic formalism for countable Markov shifts, \emph{Ergodic Theory and Dynamical Systems} 19(6), 1565 -- 1593, 1999
\bibitem{sarigCMP} O. Sarig. Phase transitions for countable Markov shifts, \emph{Commun. Math. Phys.} 217, 555 -- 577, 2001
\bibitem{sarigPAMS} O. Sarig. Existence of Gibbs measures for countable Markov shifts. \emph{Proc. Amer. Math. Soc.} 131(6), 1751--1758, 2003
\bibitem{Tro} S. Troscheit. \newblock{H\"{o}lder differentiability of self-conformal devil's staircases.}
{\em Math. Proc. Cambridge Philos. Soc.} 156 no. 2, 295--311, 2014.
\end{thebibliography}
\end{document}